\let\tempone\itemize
\let\temptwo\enditemize
\renewenvironment{itemize}{\tempone \vspace{5pt}\addtolength{\itemsep}{0.5\baselineskip}}{\vspace{5pt} \temptwo}
\let\tempenum\enumerate
\let\tempenumtwo\endenumerate
\renewenvironment{enumerate}{\tempenum \vspace{5pt} \addtolength{\itemsep}{0.5\baselineskip}}{ \vspace{5pt} \tempenumtwo}
\let\origsection\section
\renewcommand\section{\@ifstar{\starsection}{\nostarsection}}
\newcommand\nostarsection[1]
\sectionprelude\origsection{#1}\sectionpostlude}
\newcommand\starsection[1]
\newcommand\sectionprelude{%
	\vspace{1em} 
}
\newcommand\sectionpostlude{%
	\vspace{1em}   
}
\newcommand\Item[1][]{%
	\ifx\relax#1\relax  \item \else \item[#1] \fi
	\abovedisplayskip=0pt\abovedisplayshortskip=0pt~\vspace*{-\baselineskip}}
\let\origsubsection\subsection
\renewcommand\subsection{\@ifstar{\starsubsection}{\nostarsubsection}}
\newcommand\nostarsubsection[1]
\sectionprelude\origsubsection{#1}\subsectionpostlude}
\newcommand\starsubsection[1]
\newcommand\subsectionprelude{%
	\vspace{0.25em} 
}
\newcommand\subsectionpostlude{%
	\vspace{0.0em}   
}
\m@th\displaystyle{##}$\hfil}{$\m@th\displaystyle{##}$\hfil}{\lbrace}{.}
\newcounter{pos} 
\tikzset{									
	initcounter/.code={\setcounter{pos}{0}},
	style between/.style n args={3}{
		postaction={
			initcounter,
			decorate,
			decoration={
				show path construction,
				curveto code={
					\addtocounter{pos}{1}
					\pgfmathtruncatemacro{\min}{#1 - 1}
					\ifthenelse{\thepos < #2 \AND \thepos > \min}{
						\draw[#3]
						(\tikzinputsegmentfirst)
						..
						controls (\tikzinputsegmentsupporta) and (\tikzinputsegmentsupportb)
						..
						(\tikzinputsegmentlast);
					}{}
				}
			}
		},
	},
}
\tikzset{
	clip even odd rule/.code={\pgfseteorule}, 
	invclip/.style={
		clip,insert path=
		[clip even odd rule]{
			[reset cm](-\maxdimen,-\maxdimen)rectangle(\maxdimen,\maxdimen)
		}
	}
}
\DeclareMathOperator{\Hom}{Hom}
\DeclareMathOperator{\Perf}{Perf}
\newcommand{\Dfd}[1]{\mathsf{D}_{\operatorname{fd}}(#1)}
\newcommand{\1}{\mathbf{1}}
\newcommand{\DPic}{\operatorname{\mathcal{D}Pic}}
\tikzset{
	set arrow inside/.code={\pgfqkeys{/tikz/arrow inside}{#1}},
	set arrow inside={end/.initial=>, opt/.initial=},
	/pgf/decoration/Mark/.style={
		mark/.expanded=at position #1 with
		{
			\noexpand\arrow[\pgfkeysvalueof{/tikz/arrow inside/opt}]{\pgfkeysvalueof{/tikz/arrow inside/end}}
		}
	},
	arrow inside/.style 2 args={
		set arrow inside={#1},
		postaction={
			decorate,decoration={
				markings,Mark/.list={#2}
			}
		}
	},
}
\tikzset{commutative diagrams/.cd,arrow style=tikz,diagrams={>=latex'}}\tikzset{join/.code=\tikzset{after node path={%
			\ifx\tikzchainprevious\pgfutil@empty\else(\tikzchainprevious)%
			edge[every join]#1(\tikzchaincurrent)\fi}}}
\tikzset{>=stealth',every on chain/.append style={join},
	every join/.style={->}}
\tikzset{every loop/.style={min distance=25mm,in=50,out=100,looseness=5}}
\newtheorem{prf}{Proof}[section]
\theoremstyle{remark}
\newtheoremstyle{ownTheoremStyle}
{1em}
{1em}
{\itshape}
{}
{\bfseries}
{.}
{ }
{}
\newtheoremstyle{ownDefinitionStyle}
{1em}
{1em}
{}
{}
{\bfseries}
{.}
{ }
{}
\theoremstyle{ownTheoremStyle}
\newtheorem{thm}[prf]{Theorem}
\newtheorem{Introthm}{Theorem}
\newtheorem{lem}[prf]{Lemma}
\newtheorem{prp}[prf]{Proposition}
\newtheorem{cor}[prf]{Corollary}
\theoremstyle{ownDefinitionStyle}
\newtheorem*{convention}{Convention}
\newtheorem{definition}[prf]{Definition}
\newtheorem{rem}[prf]{Remark}
\numberwithin{equation}{section}
\newcommand{\cA}{\mathcal{A}}
\newcommand{\cB}{\mathcal{B}}
\newcommand{\cC}{\mathcal{C}}
\newcommand{\cD}{\mathcal{D}}
\newcommand{\cH}{\mathcal{H}}
\newcommand{\cK}{\mathcal{K}}
\newcommand{\cM}{\mathcal{M}}
\newcommand{\cN}{\mathcal{N}}
\newcommand{\cP}{\mathcal{P}}
\newcommand{\cR}{\mathcal{R}}
\newcommand{\cQ}{\mathcal{Q}}
\newcommand{\cU}{\mathcal{U}}
\newcommand{\cW}{\mathcal{W}}
\newcommand{\op}{\operatorname{op}}
\newcommand{\HHH}{\operatorname{HH}}
\newcommand{\HH}{\operatorname{H}}
\newcommand{\rot}{\operatorname{rot}}
\newcommand{\para}[1]{{#1}_{\odot}}
\newcommand{\Fuk}{\operatorname{Fuk}}
\title[Hochschild cohomology of graded gentle algebras and intrinsic formality]{Hochschild cohomology of graded gentle algebras and intrinsic formality}
\author{Sebastian Opper}
\address{Department of Algebra,
	Charles University,
	Sokolovská 49/83, Praha 8, Czechia}
\email{opper@karlin.mff.cuni.cz }
\begin{document}
	\maketitle

	\begin{abstract}
	We describe the (bigraded) Hochschild cohomology of graded gentle algebras along with the Gerstenhaber bracket and cup product. In particular, this yields a description of the Hochschild cohomology of partially wrapped Fukaya categories of surfaces in the sense of Haiden-Katzarkov-Kontsevich which have at least one stop. Our results are an important ingredient in the author's recent description of the derived Picard group of partially wrapped Fukaya categories and graded gentle algebras. As another application we provide a characterisation of intrinsically formal graded gentle algebras under mild assumptions.
	\end{abstract}
	\setcounter{tocdepth}{1}
	\tableofcontents
	
	\section*{Introduction}
	\noindent Our interest in Hochschild cohomology stems from a general connection between the Hochschild cohomology of a dg category $\cC$ or $A_\infty$-category and its \textit{derived Picard group} $\DPic(\cC)$. This group can be thought of as an `enhanced' variant of the autoequivalence group of the derived category of $\cC$ which takes into account the `higher' structure that a dg or $A_\infty$-structure provides. The author showed recently in \cite{OpperIntegration} that, similar to the relationship between a Lie group and its Lie algebra, there is an exponential map $$\exp\colon \HHH^1_+(\cC,\cC) \rightarrow \DPic(\cC),$$ which is defined on a subspace of the first Hochschild cohomology. The exponential shares many properties with its classical counterpart and satisfies an analogue of the Baker-Campbell-Hausdorff formula. The Gerstenhaber bracket therefore allows one to express the group multiplication in the image of $\exp$ explicitly and an understanding of the Hochschild cohomology and Gerstenhaber bracket of an algebra or category provides insight into the structure of the corresponding derived Picard group. Naturally, the Hochschild cohomology of graded gentle algebras and the exponential play a vital role in the author's recent description \cite{OpperGradedGentle} of the derived Picard groups of graded gentle algebras which uses the results of this article.
	
	 First introduced in the 1980s \cite{AssemSkowronski} the class of gentle algebras received increased attention thanks to their connection to \textit{partially wrapped Fukaya categories of surfaces} as introduced in \cite{BocklandtMirrorSymmetryPuncturedSurfaces, HaidenKatzarkovKontsevich}. A subclass of graded gentle algebras then appear as the cohomology algebras of formal generators in these categories and as a result, our description of the Hochschild cohomology for graded gentle algebras also provides a description of the Hochschild cohomology of such Fukaya categories in all cases except those in \cite{BocklandtMirrorSymmetryPuncturedSurfaces}. In this sense, our work is complementary to the results in \cite{BocklandtVanDeKreeke} and \cite{ChaparroSchrollSolotarSuarezAlvarez}. The former describes the Hochschild cohomology of Fukaya categories of punctured surfaces considered in \cite{BocklandtMirrorSymmetryPuncturedSurfaces} and while the latter computes the Hochschild cohomology (and the entire Tamarkin-Tsygan calculus) of ungraded gentle algebras. The Hochschild cohomology of graded gentle algebras also played an important role in the recent work \cite{BarmeierSchrollWang} which computes their Hochschild cohomology in degree $2$.
	 
	To define a Fukaya category in the sense of \cite{HaidenKatzarkovKontsevich}, one starts with a \textit{graded marked surface}, an oriented surface $\Sigma$ together with a distinguished subset $\cM$ of its boundary and a line field. A line field allows one to assign a unique integer to every boundary component of $\Sigma$, its so-called \textit{winding number}. To a graded marked surface and certain  auxiliary data, the authors of \cite{HaidenKatzarkovKontsevich} then assign their partially wrapped Fukaya category. The category turns out to be independent from the auxiliary data which corresponds to the choice of a generator of the Fukaya category and which has an underlying graded gentle algebra. In the other direction, starting from a graded gentle algebra $A$ one can construct a graded marked surface $\Sigma_A$ \cite{HaidenKatzarkovKontsevich, LekiliPolishchukGentle, OpperPlamondonSchroll, PaluPilaudPlamondon}.
	
	To simplify stating our main results, we will exclude two exceptional cases from our discussion in the introduction, namely gentle algebras those whose underlying quiver is either a loop or the Kronecker quiver. Our first result describes the entire Hochschild cohomology of a graded gentle algebra.	
	\begin{Introthm}\label{Introthm: Hochschild cohomology}
	Let $A$ be a graded gentle algebras, possibly neither homologically smooth nor proper and let $\Sigma_A$ denote its graded marked surface. Then $\HHH^{\bullet}(A,A)$ has a $\Bbbk$-linear (Schauder) basis whose elements are in bijection with the set consisting of the following elements:
	\begin{itemize}
		\item the elements $f_1, \dots, f_l$ of a basis of $\HH^1(\Sigma_A, \Bbbk)$;
		\item a pair of elements $\cN^0(\gamma)$ and $\cN^1(\gamma)$ for every non-contractible orientation-preserving path $\gamma\colon S^1 \rightarrow \partial \Sigma_A$ (up to homotopy) with winding number $\omega$ such that $\omega \cdot \operatorname{char} \Bbbk$ is even and such that the boundary component $B$ which is the image of $\gamma$ is either fully marked or unmarked;
		\item an element $[B]$ for every boundary component $B$ of $\Sigma_A$ with exactly one stop. 
	\end{itemize}
\noindent Moreover, $f_i \in \HHH^1(A,A)$, $\cN^s(\gamma) \in \HHH^{\omega+s}(A,A)$ and $[B] \in \HHH^{\omega}(A,A)$, where $\omega$ denotes the winding number of $B$. Here, a boundary component $B$ is \textit{fully marked} if it lies in the distinguished subset $\cM$ of $\partial \Sigma_A$ and \textit{unmarked} if it is disjoint from it. A \textit{stop} on $B$ is by definition a connected component of $B \setminus \cM$.
\end{Introthm}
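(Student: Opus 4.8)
The plan is to compute $\HHH^{\bullet}(A,A)$ from an explicit minimal projective bimodule resolution of $A$ and then, via the gentle-algebra--surface dictionary, to read the answer off from $\Sigma_A$; the first task is to set up a combinatorial Hochschild complex. A graded gentle algebra is monomial, so I would use the graded, topologically completed analogue of Bardzell's minimal projective $A$-bimodule resolution $P_{\bullet}\to A$. Writing $Q$ for the (possibly infinite) gentle quiver of $A$, $R$ for its set of defining length-two relations, and $|\cdot|$ for degrees, one takes $P_n=\bigoplus_{p}Ae_{s(p)}\otimes_{\Bbbk}e_{t(p)}A[-|p|]$ for $n\ge 0$, where $p=\alpha_n\cdots\alpha_1$ ranges over the \emph{relation paths} of length $n$, i.e.\ the paths with $\alpha_{i+1}\alpha_i\in R$ for all $i$ (the length-$0$ ones being the idempotents $e_v$, $v\in Q_0$, and the length-$1$ ones the arrows), completed in the non-proper case so that $\Hom_{A^e}(-,A)$ is well behaved --- this is where the word ``Schauder'' enters. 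Applying $\Hom_{A^e}(-,A)$ produces a cochain complex whose homological degree $n$ part has a (topological) $\Bbbk$-basis of pairs $(p,q)$ with $p$ a relation path of length $n$ and $q$ a nonzero path of $A$ having the same source and target as $p$; such a pair lies in internal degree $|q|-|p|$, hence in total Hochschild degree $n+|q|-|p|$. Gentleness makes the differential extremely sparse --- each relation path admits at most one admissible one-step extension on each side --- so it can be written out explicitly in this basis.

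\emph{Step 2: small subcomplexes and translation to $\Sigma_A$.} Because gentle algebras are rigid, the pairs $(p,q)$ organise into a short list of combinatorial families, each forming a finite or periodic subcomplex that one resolves by hand. Via the correspondence of \cite{HaidenKatzarkovKontsevich,LekiliPolishchukGentle,OpperPlamondonSchroll} between the combinatorics of $A$ and the surface $\Sigma_A$, the families are as follows. (i) The $1$-cocycles supported on the arrows of $Q$: their cohomology is the first cohomology $\HH^1$ of the underlying graph of $Q$, which equals $\HH^1(\Sigma_A,\Bbbk)$, and this yields $f_1,\dots,f_l$. (ii) Bi-infinite periodic relation sequences, which correspond exactly to the non-contractible orientation-preserving loops $\gamma\colon S^1\rightarrow\partial\Sigma_A$ whose image $B$ is fully marked or unmarked; each such $\gamma$ gives a two-periodic (``circle-like'') subcomplex whose connecting differential is a signed multiplicity count depending on $\omega$ and $\operatorname{char}\Bbbk$, and this count vanishes in $\Bbbk$ precisely when $\omega\cdot\operatorname{char}\Bbbk$ is even, in which case the subcomplex survives with one class in each of the Hochschild degrees $\omega$ and $\omega+1$, namely $\cN^0(\gamma)$ and $\cN^1(\gamma)$. (iii) \emph{Terminating} relation paths running once around a boundary component $B$ with exactly one stop and stopping there, each contributing a single class $[B]$ in Hochschild degree $\omega$. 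All remaining subcomplexes --- in particular those attached to boundary components with at least two stops --- are acyclic, and the base case $n=0$ recovers $\HHH^0(A,A)=Z(A)$ compatibly with the list. A separate, routine bookkeeping identifies the accumulated internal-degree shift $|q|-|p|$ along a periodic (resp.\ terminating) relation sequence with the winding number of the corresponding boundary component --- this is exactly the recipe by which winding numbers are extracted from the grading of $A$ in \cite{LekiliPolishchukGentle,OpperPlamondonSchroll} --- placing the surviving classes in the asserted degrees.

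\emph{Main obstacle.} The crux is Step 2: enumerating the subquotient complexes exhaustively so that no class is missed or counted twice; pinning down the coefficients of the differential, in particular isolating the arithmetic condition that $\omega\cdot\operatorname{char}\Bbbk$ be even (it originates in how a periodic relation sequence wraps its boundary component and is invisible outside small characteristic); and controlling the completions in the non-proper case so that the family really is a Schauder basis. Matching the output with the intrinsic data of $\Sigma_A$ --- fully marked, unmarked, and one-stop boundary components and their winding numbers --- is then comparatively formal via the dictionary, though it must be done uniformly and with the two algebras excluded in the introduction tracked separately.
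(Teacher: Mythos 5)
Your proposal follows essentially the same route as the paper: the graded Bardzell resolution for the quadratic monomial algebra $A$, identification of the dual complex with parallel pairs $(p,q)$, a decomposition of that complex into combinatorial families (periodic ones for fully marked/unmarked boundary components yielding $\cN^0,\cN^1$ with the $\omega\cdot\operatorname{char}\Bbbk$ condition arising from the signed count $1-(-1)^{\omega}$, terminating ones for one-stop components yielding $[B]$, and the arrow classes yielding $\HH^1(\Sigma_A,\Bbbk)$), plus the completion/Schauder caveat in the non-proper case. The exhaustive enumeration you flag as the main obstacle is exactly what the paper carries out via its graph-theoretic ``atomic decomposition'' of cocycles, but the strategy and the resulting classification coincide.
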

\noindent In fact, we prove a slightly stronger result and determine the bigraded Hochschild cohomology of every graded gentle algebra, cf.~\Cref{thm: appendix Hochschild basis}. In what follows, we identify the elements in \Cref{Introthm: Hochschild cohomology} with their corresponding basis elements of $\HHH^{\bullet}(A,A)$. Through the relationship \cite{HaidenKatzarkovKontsevich} between partially wrapped Fukaya categories and homologically smooth graded gentle algebras, \Cref{Introthm: Hochschild cohomology} therefore provides a description of the Hochschild cohomology of all partially wrapped Fukaya categories associated with a graded marked surface with at least one stop. Our next result describes the Gerstenhaber bracket of $\HHH^{\bullet}(A,A)$ in terms of the basis. For the description of the cup product see \Cref{thm: cup product}.
\begin{Introthm}\label{IntrothmB}
Let $A$ be as before. The following holds: 
\begin{enumerate}
	\item For $1 \leq i \leq l$ and $v \in \HHH^{\bullet}(A,A)$ of the form $\cN^s(\gamma)$ or $[B]$, $$[f_i, v]= f_i(B) v,$$ where $f_i$ is evaluated at $B$ through the identification of  $B \cong S^1$ with an element of $\HH_1(\Sigma_A, \Bbbk)$ and the isomorphism $\HH^1(\Sigma_A, \Bbbk)\cong \Hom_{\Bbbk}(\HH_1(\Sigma_A, \Bbbk), \Bbbk)$.
	\item For every primitive non-contractible closed path $\gamma\colon S^1 \rightarrow \Sigma_A$ and all $m, n \geq 1$ such that the paths $\gamma^m$ and $\gamma^n$ satisfy the conditions of \Cref{Introthm: Hochschild cohomology}, 
	\begin{displaymath}
	\begin{aligned}
	[\cN^1(\gamma^m), \cN^1(\gamma^n)] & =(m-n)\cdot  \cN^1(\gamma^{m+n}) \\
	[\cN^0(\gamma^m), \cN^1(\gamma^n)] & = m  \cdot \cN^0(\gamma^{m+n}).
	\end{aligned}
	\end{displaymath}
	\item All other brackets between basis elements vanish.
\end{enumerate}
\end{Introthm}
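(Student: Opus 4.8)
The plan is to work with an explicit small model for the Hochschild cochain complex rather than with the bar complex. First I would fix, for each basis vector produced by \Cref{Introthm: Hochschild cohomology}, a concrete cocycle representative in the combinatorial resolution used to establish that theorem: the class $f_i$ is represented by the ``toric'' derivation $\delta_i$ of $A$ that rescales each arrow $a$ by the weight $w_i(a) \in \Bbbk$ dictated by the corresponding class of $\HH^1(\Sigma_A,\Bbbk)$; the classes $\cN^s(\gamma^m)$ and $[B]$ are represented by cochains \emph{supported on} the closed curve $\gamma$, respectively on the boundary component $B$ — that is, built only from the letters (arrows and relations) traversed by $\gamma$, respectively lying on $B$. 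I would then record the brace operations and the induced Gerstenhaber bracket transported to this small model by means of the explicit chain-level comparison maps to and from the bar resolution that already appear in the proof of \Cref{Introthm: Hochschild cohomology}, so that every subsequent computation is finite and localised near the supports of the cochains involved.

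Statement (1) is a weight computation. The derivation $\delta_i$ acts on the full Hochschild complex by the Lie derivative $v \mapsto [\delta_i, v]$, and this action is diagonalisable in the combinatorial basis: a cochain sending a path $p$ to a path $q$ is an eigenvector with eigenvalue $w_i(q) - w_i(p)$. Every path occurring in the cocycle $\cN^s(\gamma^m)$ or $[B]$ winds exactly once around the boundary component $B$, so its net $w_i$-weight equals the pairing of $w_i$ with the homology class $[B] \in \HH_1(\Sigma_A,\Bbbk)$, which under the identifications in the statement is precisely $f_i(B)$; hence $[f_i, v] = f_i(B)\, v$. In particular $[f_i, f_j] = 0$, since the $\delta_i$ are simultaneously diagonal on arrows — this disposes of one family of brackets in (3).

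The remaining brackets are governed by a disjointness principle: a brace $\alpha\{\beta\}$, and hence $[\alpha,\beta]$, vanishes as soon as the supports of $\alpha$ and $\beta$ involve no common letter, because inserting $\beta$ into $\alpha$ then always produces a term that is zero in $A$; this is the algebraic shadow of the fact that disjoint curves Poisson-commute for the Goldman bracket. It immediately gives $[\cN^s(\gamma),\cN^t(\gamma')] = 0$ whenever $\gamma$ and $\gamma'$ are not powers of one primitive curve (choose disjoint representatives), $[[B],\cN^s(\gamma)] = 0$, and $[[B],[B']] = 0$ for distinct boundary components; and since $B$ carries a single stop, the cochain $[B]$ cannot be inserted into a copy of itself nontrivially, so also $[[B],[B]] = 0$. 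For the nonzero brackets in (2) one restricts everything to the gentle subquotient of $A$ supported on the primitive curve $\gamma$ — the graded gentle algebra of an annulus, a cyclic Nakayama-type algebra — on which $\{\cN^1(\gamma^m)\}_{m \geq 1}$ spans a Lie subalgebra isomorphic to a subalgebra of the Witt algebra and $\{\cN^0(\gamma^m)\}_{m \geq 1}$ an abelian module over it; expanding the brace operations on this band model produces exactly the displayed coefficients $m-n$ and $m$, together with $[\cN^0(\gamma^m),\cN^0(\gamma^n)] = 0$.

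The main obstacle is making this last, local step fully rigorous with correct signs. One must control the comparison map between the combinatorial resolution and the bar resolution precisely enough to evaluate the brace $\cN^{s}(\gamma^m)\{\cN^{t}(\gamma^n)\}$ on the nose, verify that inserting one band cocycle into another yields the band cocycle for $\gamma^{m+n}$ with the stated integer coefficient and no spurious contribution landing on the $f_i$- or $[B]$-part of the basis, and track the Koszul signs — this is where the parity hypothesis that $\omega \cdot \operatorname{char}\Bbbk$ be even enters, both to guarantee that the relevant classes exist and to make the signs cooperate. I expect the cleanest implementation is to equip the band complex with an auxiliary $\Bbbk[t,t^{-1}]$-linear structure that makes the Witt-algebra action manifest, carry out the computation there, and then use that the bracket of two cochains supported on $\gamma$ is again supported on $\gamma$ and may therefore be computed inside the band in order to transport the identities back to $\HHH^{\bullet}(A,A)$.
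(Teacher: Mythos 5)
Your overall strategy --- computing on the small combinatorial resolution via the explicit comparison maps, and reading off the brackets from the transported insertion operations --- is exactly the route the paper takes (the formulae \eqref{eq: composition product 1}--\eqref{eq: composition product 4} and \Cref{lem: Lie bracket complete paths and anti-paths}). Your eigenvalue argument for part (1) is a clean repackaging of the same computation: the paper evaluates $[(\alpha,\alpha),\cN^1(p^n)]$ directly and finds the coefficient is the number of occurrences of $\alpha$ in $p^n$, then extends by linearity, which is precisely your weight count. For part (2) your plan of localising to the band and tracking Koszul signs is also what the paper does in substance, and you correctly identify sign bookkeeping and the parity hypothesis as the delicate points.

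The genuine gap is in part (3). Your ``disjointness principle'' --- that the brace vanishes once the supports share no letter --- is true but does not cover the cases that actually require work, because chain-level representatives attached to \emph{distinct} boundary components of $\Sigma_A$ need not have disjoint supports: by gentleness every arrow of $Q$ lies on one maximal path and one maximal anti-path, and these typically belong to different boundary cycles, so for instance $\cN^s$ of a complete anti-path and $\cN^s$ of a complete path can share arrows and vertices. Accordingly ``choose disjoint representatives'' is not available at the chain level, and several of the relevant insertions are genuinely non-zero there: the paper's vanishing lemma finds that $(u,\para{u})\circ w$ for $w$ among $(u',\para{u}')$, $\cN^0(p)$, $\cN^0(q)$, $(s(v),v)$ can be a non-zero atomic cochain of rank one, and one must \emph{recognise} it as a coboundary using the classification in \Cref{lem: classification atomic cocycle type A rank 1} (similarly, the mixed product of a complete anti-path with a complete path through a common source is non-zero and is shown to be a coboundary in \Cref{lem: cup product orthogonal}). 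So the vanishing in (3) is not a support-disjointness statement but a statement about which atomic cochains are exact, and your argument as written would miss exactly these overlapping cases. The fix is to supplement your principle with the atomic-cocycle/coboundary classification underlying \Cref{thm: appendix Hochschild basis}, which is also what you need anyway to certify that the band computation in (2) produces no spurious contributions.
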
 
\noindent Finally, the bigraded variant of \Cref{Introthm: Hochschild cohomology} allows us to give a characterisation of intrinsically formal graded gentle algebras. We recall that a graded $\Bbbk$-algebra $A$ is \textit{intrinsically formal} if any dg or $A_\infty$-algebra $B$ with cohomology $A$ is already formal, that is, quasi-isomorphic to $A$. In other words, there are no non-trivial higher structures (=$A_\infty$-algebra structures) on intrinsically formal algebras. This property is often useful, for example when comparing generators of different triangulated categories. If then the graded endomorphism rings of the two generators are isomorphic and intrinsically formal, then the corresponding triangulated categories are automatically equivalent.

 \begin{Introthm}[{\Cref{thm: intrinsic formality graded gentle algebras}}]
Let $A$ be a graded gentle algebra, possibly neither homologically smooth nor proper and suppose $\Sigma_A$ contains no boundary components which has a single stop and winding number $2$. Then $A$ is intrinsically formal if and only if $\Sigma_A$ has no unmarked boundary component with winding number $2$.
\end{Introthm}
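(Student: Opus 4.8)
The plan is to play the standard obstruction theory for $A_\infty$-deformations against the bigraded Hochschild cohomology computed in \Cref{thm: appendix Hochschild basis}. Recall (Halperin--Stasheff, Kadeishvili) that an $A_\infty$-algebra $B$ with $\HH^\bullet(B)\cong A$ is, up to $A_\infty$-isomorphism, the same datum as a gauge class of Maurer--Cartan elements $m=m_3+m_4+\cdots$ in the truncated Hochschild dg Lie algebra of $A$, where $m_n$ has cohomological degree $n$ and internal degree $2-n$; the obstructions to gauging such an $m$ to zero are iterated Gerstenhaber brackets of its components and live in the groups $\HHH^n(A,A)$ of internal degree $2-n$ with $n\ge 3$. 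Hence $A$ is intrinsically formal as soon as $\HHH^n(A,A)^{2-n}=0$ for all $n\ge3$; conversely, $A$ fails to be intrinsically formal once one exhibits a single Maurer--Cartan lift $m$ with $[m_3]\neq0$ in $\HHH^3(A,A)^{-1}$, because the cubic $A_\infty$-equation forces $[m_3]$ to be a Hochschild coboundary for any $A_\infty$-isomorphism from $(A,m_2+m_3+\cdots)$ onto the formal model $(A,m_2)$ (conjugation by the algebra automorphism $F_1$ preserves coboundaries and sends the zero cubic term to zero). This second argument uses neither smoothness nor properness of $A$, which is essential here since $A$ need be neither.

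For the ``if'' direction, assume $\Sigma_A$ has no unmarked boundary component of winding number $2$ and, by hypothesis, none with a single stop and winding number $2$. One then reads off the bidegrees of the basis in \Cref{thm: appendix Hochschild basis}: the $f_i$ lie in cohomological degree $1$, while $\cN^0(\gamma),\cN^1(\gamma)$ and $[B]$ lie in cohomological degrees $\omega,\omega+1$ and $\omega$ with internal degrees that place a basis element in an obstruction bidegree $(n,2-n)$, $n\ge3$, exactly when $\gamma$ runs once around an unmarked boundary component of winding number $2$ — producing $\cN^1(\gamma)\in\HHH^3(A,A)^{-1}$ — together with the analogous contribution $[B]$ of a single-stop winding-$2$ component, which is precisely why the latter is set aside by hypothesis. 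Under our assumptions no such basis vector exists, so $\HHH^n(A,A)^{2-n}=0$ for all $n\ge3$ and $A$ is intrinsically formal.

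For the ``only if'' direction, suppose $B$ is an unmarked boundary component of winding number $2$ and let $\gamma\colon S^1\to\partial\Sigma_A$ be the primitive loop onto $B$. By \Cref{IntrothmB}(2) with $m=n=1$ we get $[\cN^1(\gamma),\cN^1(\gamma)]=(1-1)\,\cN^1(\gamma^2)=0$, so $\cN^1(\gamma)$ is a Maurer--Cartan class; moreover \Cref{IntrothmB} shows that the span of the classes $\cN^s(\gamma^k)$ forms a Witt-type dg Lie subalgebra in which every iterated bracket of $\cN^1(\gamma)$ vanishes, so the successive obstructions to lifting vanish and $\cN^1(\gamma)$ extends to an honest Maurer--Cartan element $m=m_3+m_4+\cdots$ with $[m_3]=\cN^1(\gamma)$. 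Since $\cN^1(\gamma)\neq0$ in $\HHH^3(A,A)^{-1}$, the $A_\infty$-algebra $(A,m)$ is not formal by the cubic obstruction argument above, so $A$ is not intrinsically formal.

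I expect the main obstacle to be the ``only if'' direction. Extracting a Maurer--Cartan \emph{class} from the explicit Gerstenhaber bracket is immediate, but checking that it lifts to a genuine $A_\infty$-structure — that all higher obstructions vanish, uniformly over every graded gentle algebra, including the non-smooth and non-proper ones where no geometric Fukaya-category model is available — requires controlling the full obstruction calculus through the bracket and cup-product formulas of \Cref{IntrothmB} and \Cref{thm: cup product}, and tracking the second grading throughout. Pinning down exactly the single-stop winding-$2$ phenomenon that must be excluded, and confirming it is the only obstruction-range contribution besides the unmarked winding-$2$ loops, is the other delicate point.
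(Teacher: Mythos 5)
Your ``if'' direction follows the paper's route (Kadeishvili's criterion applied to the bigraded basis of \Cref{thm: appendix Hochschild basis}), but your degree bookkeeping contains a genuine error which then propagates into the ``only if'' direction. For a primitive complete anti-path $p$ of length $l$ with $\omega(p)=l-|p|=2$, the class $\cN^1(p)=(\alpha_l p,\alpha_l)$ sits in bidegree $(l+1,\,2-l)$, hence in \emph{total} degree $3$: it does not lie in any obstruction bidegree $(n,2-n)$ and in particular cannot serve as a component $m_n$ of a minimal $A_\infty$-structure, since those have total degree $2$. The class that actually lands in an obstruction bidegree is $\cN^0(p)$, of bidegree $(l,2-l)$, and it is (a rotation summand of) this class that corresponds to the candidate higher product $m_l$ --- note also that the relevant arity is $l$, the length of the anti-path, not $3$. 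Because $\cN^0(\gamma)$ and $\cN^1(\gamma)$ are simultaneously present in or absent from the basis, your final list of excluded boundary components coincides with the correct one, but the identification ``$\cN^1(\gamma)\in\HHH^3(A,A)^{-1}$ is the obstruction class'' is wrong as stated, and the whole Maurer--Cartan discussion built on $\cN^1(\gamma)$ inherits this defect.

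The more serious issue is the ``only if'' direction, which you rightly flag as the hard part but do not actually prove. Lifting a Maurer--Cartan \emph{class} to an honest minimal $A_\infty$-structure requires killing obstructions in every relevant bidegree, and then one must still argue that the leading term survives arbitrary $A_\infty$-isomorphisms (whose linear part is a nontrivial automorphism of $A$); none of this is carried out, and it would in any case have to start from $\cN^0(\gamma)$ rather than $\cN^1(\gamma)$. The paper avoids this obstruction calculus entirely by a geometric construction: it caps the unmarked boundary component $B$ with a disk --- the winding-number-$2$ hypothesis is precisely the condition for the line field to extend over the capping disk --- so that the complete anti-path becomes a disk sequence on the enlarged surface and \cite{HaidenKatzarkovKontsevich} yields a genuine minimal $A_\infty$-structure $A'$ on $A$ with non-trivial higher products. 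Non-formality of $A'$ is then detected not through a non-vanishing Hochschild class but through the Grothendieck group: the disk sequence forces a relation among the vertex classes in $\cK_0(A')$, whereas $\cK_0(A)$ is free on those classes (the non-smooth case being reduced to the smooth one by passing to an idempotent overalgebra). So your ``if'' direction is essentially the paper's argument modulo the $\cN^1$-versus-$\cN^0$ confusion, while your ``only if'' direction is a genuinely different but incomplete plan whose missing steps are exactly what the paper's geometric argument is designed to circumvent.
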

\noindent  We strongly expect that this is also true when dropping the additional assumption on $\Sigma_A$, cf.~\Cref{rem: intrinsic formality}.\medskip

\noindent While working on this paper, we were recently informed about the work \cite{BianSchrollSolotarWangWen} by Xiuli Bian, Sibylle Schroll, Andrea Solotar, Xiao-Chuang Wang and Can Wen who describe the Hochschild cohomology of graded skew-gentle algebras along with its Gerstenhaber bracket and cup product via different methods.

\subsection*{Acknowledgments}This project was supported by the Primus grant PRIMUS/23/SCI/006.

\subsection*{Notation}
Given a quiver $Q$, we will denote by $Q_0$ and $Q_1$ its sets of vertices and arrows respectively and denote by $\Bbbk Q$ the associated path algebra. We further denote by $s,t \colon Q_1 \rightarrow Q_0$  the functions which assign to an arrow $\alpha \in Q_1$ its start and end vertex. Arrows will be composed from right to left, that is, if $s(\beta)=t(\alpha)$ we denote by $\beta \alpha$ their concatenation and hence their product in $\Bbbk Q$. The trivial path of length zero associated to a vertex $x \in Q_0$ will be denoted by $e_x$.

\section{Gentle algebras and Fukaya categories}
\subsection{Graded gentle algebras and partially wrapped Fukaya categories}
\noindent
\begin{definition}A \textbf{graded gentle quiver} is a pair $(Q,I)$ consisting of a finite quiver $Q$ whose arrows are graded by integers and ideal $I \subseteq \Bbbk Q$ which satisfy all of the following conditions:
	\begin{enumerate}
		\item For every vertex $x \in Q_0$, there are at most two arrows $\alpha, \beta \in Q_1$ and at most two arrows $\gamma, \delta \in Q_1$ such that $s(\alpha)=x=s(\beta)$ and $t(\gamma)=x=t(\delta)$.
		\item For every arrow $\alpha \in Q_1$, there exists at most one arrow $\beta$  such that $t(\alpha)=s(\beta)$ and $\beta \alpha \in I$ and at most one arrow $\gamma \in Q_1$ such that $s(\alpha)=t(\gamma)$ and $\gamma \alpha \in I$.
		\item  For every arrow $\alpha \in Q_1$, there exists at most one arrow $\beta$ such that $t(\alpha)=s(\beta)$ and $\beta \alpha \not\in I$ and at most one arrow $\gamma \in Q_1$ such that $s(\alpha)=t(\gamma)$ and $\gamma \alpha \not\in I$.
		\item $I$ is generated by a quadratic monomials $\alpha \beta$, that is, paths of length $2$.
	\end{enumerate}
A \textbf{graded gentle algebra} is a graded $\Bbbk$-algebra which is isomorphic as a graded algebra to a $\Bbbk Q / I$, where $(Q,I)$ is a graded gentle quiver and $\Bbbk Q / I$ inherits its grading from the given grading on the arrows. For any path $q$ in $Q$, we denote by $|q|$ its degree.
\end{definition}
\begin{convention}
Unless explicitly stated otherwise, we will always exclude gentle quivers from our discussion which are a loop with a single vertex or the Kronecker quiver. These cases behave somewhat special and will be treated separately.	
\end{convention} 
\noindent Graded gentle algebras appear as formal generators of certain Fukaya categories.
\begin{definition}[{\cite{HaidenKatzarkovKontsevich}}]A \textbf{graded marked surface} is a triple $(\Sigma, \cM, \eta)$ consisting of
	\begin{enumerate}
		\item a smooth oriented surface $\Sigma$ with boundary,
		\item  a non-empty compact subset $\cM \subseteq \partial \Sigma$ such that $\cM$ is homeomorphic to a disjoint union of copies of the unit circle $S^1$ and closed intervals $[a, b]$, $a < b$.
		\item a line field $\eta$ on $\Sigma$, that is, a section of the projectivised tangent bundle.
	\end{enumerate} 
\noindent A \textbf{stop} is a connected component of $\partial \Sigma \setminus \cM$. A boundary component $B$ is \textbf{fully marked} if $B \subseteq \cM$ and \textbf{unmarked} if $B \cap \cM = \emptyset$.
\end{definition}
\noindent By definition, a line field $\eta$ smoothly attaches to every point $x \in \Sigma$ a $1$-dimensional subspace $\eta(x) \subseteq T_x\Sigma$ of its tangent space at $x$. In that sense, one may think of them as an orientation-free generalisation of nowhere vanishing vector field. Indeed, every nowhere vanishing vector field $\chi$ on $\Sigma$ induces a line field which pointwise assigns the $1$-dimensional subspace generated by $\chi(x) \in T_x\Sigma$. However, not every line field arises in this way. Every line field $\eta$ gives rise to a \textbf{winding number function} $\omega=\omega_{\eta}$ which assigns to every immersed closed curve $\gamma\colon S^1 \rightarrow \Sigma$ an integer $\omega(\gamma) \in \mathbb{Z}$ called the \textbf{winding number}. Formally, this is the intersection number of the submanifold $\eta(\Sigma)$ and the derivative of $\gamma$ both viewed as submanifolds of the projectivised tangent bundle. In particular, when $\gamma$ is replaced by the inverse loop running in opposite directions, its winding number switches signs. More precisely, the $1$-dimensional subspace generated by the derivative of $\gamma$ at any point of its domain $S^1$ an embedding of $S^1$ into the projectivised tangent bundle. In this note, we will exclusively talk about winding numbers of boundary components of a graded marked surface. By definition, the winding number $\omega_{\eta}(B)$ of a boundary component $B \subseteq \partial \Sigma$, is the winding number of any orientation preserving embedding $S^1 \hookrightarrow \partial \Sigma$ associated to $B \cong S^1$. The invariance properties of $\omega_{\eta}$ guarantee that $\omega_{\eta}(B)$ is a well-defined integer.\medskip

\noindent To any graded marked surface $(\Sigma, \cM, \eta)$ for which $\cM$ intersects each boundary component, \cite{HaidenKatzarkovKontsevich} defined a pretriangulated $A_\infty$-category, the \textbf{partially wrapped Fukaya category}. The set $\cM$ and the line field $\eta$ are dropped from  the notation and the Fukaya category is denoted by $\Fuk(\Sigma)$. The concrete definition of $\Fuk(\Sigma)$ depends on the auxiliary choice of an \textit{arc system}, a collection of embedded paths in the surface  with end points in $\cM$. The higher multiplications in $\Fuk(\Sigma)$, are then defined in terms of so-called \textit{disk sequences}, certain immersed disks whose boundary is made up of the arcs of the arc system.  The justification to speak of \textit{the} Fukaya category of $\Sigma$ comes from the fact that $\Fuk(\Sigma)$ is independent up to equivalence from this auxiliary data. We refer the reader to \cite{HaidenKatzarkovKontsevich} for further details.
\begin{thm}[{\cite{HaidenKatzarkovKontsevich}}]\label{thm: gentle algebras as generators}Let $(\Sigma, \cM, \eta)$ be a graded marked surface with at least one stop. Then there exist a homologically smooth\footnote{A dg algebra $A$ is \textbf{homologically smooth} if the diagonal bimodule $A$ is a compact object in the derived category of dg $A$-bimodules $\cD(A \otimes A^{\otimes})$.} graded gentle algebra such that
	\begin{displaymath}
	\Fuk(\Sigma) \cong \Perf(A).
	\end{displaymath}
\end{thm}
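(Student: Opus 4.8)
The plan is to follow the argument of \cite{HaidenKatzarkovKontsevich}. Since $\Fuk(\Sigma)$ does not depend, up to equivalence, on the chosen arc system, one is free to compute it from a particularly convenient one. The first and most geometric step is therefore to produce a \emph{full formal arc system} $\mathbf{A}=\{a_1,\dots,a_n\}$ on $(\Sigma,\cM,\eta)$: a finite collection of pairwise non-isotopic, interior-disjoint embedded arcs with endpoints on $\cM$ which cut $\Sigma$ into polygons, each complementary polygon meeting $\partial\Sigma\setminus\cM$ in exactly one segment. This is where the hypothesis that $\Sigma$ has a stop enters: one begins with an arc cutting off a neighbourhood of a given stop and enlarges the collection one arc at a time, keeping every complementary region polygonal with a single free side — a procedure that has no reason to terminate successfully when $\partial\Sigma=\cM$. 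Each arc is then equipped with a grading compatible with the line field $\eta$.

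Next I would analyse the endomorphism $A_\infty$-algebra $\cE=\operatorname{hom}_{\Fuk(\Sigma)}(G,G)$ of $G=\bigoplus_i a_i$. Its cohomology $H^\bullet(\cE)$ is generated as a $\Bbbk$-algebra by the identities of the $a_i$ and the \emph{angles} read off at the marked points of $\cM$ between consecutive arc endpoints; the degree of an angle is dictated by $\eta$, and the only relations are that a pair of consecutive angles turning around a complementary polygon composes to zero. Hence $H^\bullet(\cE)$ is the graded gentle algebra $A=\Bbbk Q/I$ with $Q_0=\mathbf{A}$, $Q_1$ the angles and $I$ generated by these polygon-turns, the gentle axioms being immediate from the defining properties of $\mathbf{A}$. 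It then remains to show that $\cE$ is \emph{formal}, i.e.\ quasi-isomorphic to $A$: for a formal arc system the immersed disk sequences that could contribute to a higher product $\mu_k$ with $k\geq 3$ do not occur, so a minimal model of $\cE$ has no higher operations. Together with the independence of the arc system this gives $\Fuk(\Sigma)\simeq\Perf(\cE)\simeq\Perf(A)$.

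Finally, $A$ is homologically smooth. Because $\cM$ meets every boundary component of $\Sigma$ — a standing requirement for $\Fuk(\Sigma)$ to be defined — the surface has no unmarked boundary component, and under the standard dictionary (see e.g.\ \cite{OpperPlamondonSchroll}) between boundary components of $\Sigma$ and cyclic strings of arrows this forces $Q$ to contain no oriented cycle all of whose length-two subpaths lie in $I$. Consequently the standard Koszul-type projective bimodule resolution of $A$, whose term in homological degree $p$ is built from the paths of length $p$ all of whose length-two subpaths lie in $I$, has finite length, so the diagonal bimodule is a perfect object of $\cD(A\otimes A^{\op})$; equivalently, $\Sigma$ retracts onto an embedded ribbon graph whose hereditary path algebra is smooth and derived equivalent to $A$. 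The step I expect to be the real obstacle is the formality of $\cE$: one has to pin down exactly which immersed polygons arise for a formal arc system and check that none of them survives in a minimal model, and it is this disk-sequence bookkeeping that constitutes the technical core of \cite{HaidenKatzarkovKontsevich}.
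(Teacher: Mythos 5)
This statement is not proved in the paper at all: it is imported verbatim from \cite{HaidenKatzarkovKontsevich}, so there is no internal proof to compare against. Your sketch is a faithful outline of the argument given there — existence of a full formal arc system (which is exactly where the hypothesis of at least one stop is used), identification of the cohomology of its endomorphism $A_\infty$-algebra with a graded gentle algebra, formality via the absence of disk sequences, generation by construction of $\Fuk(\Sigma)$, and homological smoothness via the finiteness of the Bardzell-type bimodule resolution, which is the same criterion the paper quotes as \cite[Proposition 3.4]{HaidenKatzarkovKontsevich}. The one claim I would strike is the parenthetical ``equivalently, $\Sigma$ retracts onto an embedded ribbon graph whose hereditary path algebra is \dots derived equivalent to $A$'': a gentle algebra is in general not derived equivalent to a hereditary algebra, so this alternative justification of smoothness is false as stated, though it is redundant since your resolution argument already suffices.
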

\noindent Here the vertices of the underlying quiver of $A$ are in bijection with a chosen arc system. One can also go the other way and construct a graded marked surface $\Sigma_A$ from a graded gentle algebra $A$, cf.\cite{PaluPilaudPlamondon, HaidenKatzarkovKontsevich, LekiliPolishchukGentle, OpperPlamondonSchroll} and \Cref{rem: surface models graded gentle algebras}. The boundary components of $\Sigma_A$ and its markings are encoded combinatorially through the Avella-Alaminos-Geiss invariant, cf.\Cref{sec: boundary components AAG}.

\begin{rem}\label{rem: surface models graded gentle algebras}
Starting from a graded gentle algebra $A$ which is homologically smooth, one can construct a canonical graded marked surface $\Sigma_A$ such that $\Fuk(\Sigma_A) \simeq \Perf(A)$ \cite{LekiliPolishchukGentle}. A similar result exists for proper graded gentle algebras: if $B$ is a finite-dimensional graded gentle algebra, there exists a graded marked surface $\Sigma_B$ such that $\Dfd{B}$ models a certain (derived) completion of $\Fuk(\Sigma_B)$ \cite{OpperPlamondonSchroll, BoothGoodbodyOpper}. This is an expression of the Koszul duality between proper and homologically smooth graded gentle algebras \cite{OpperPlamondonSchroll} and generation results for $\Dfd{B}$ proved in \cite{BoothGoodbodyOpper}. Under mild conditions on $B$, or equivalently, $\Sigma_B$, $\Perf(B)$ is equivalent to \textit{infinitesimal Fukaya category} of $\Sigma_B$ which coincides with the subcategory $\Dfd{\Fuk(\Sigma_B)} \subseteq \Fuk(\Sigma_B)$ in these cases, cf.\cite[Section 9]{BoothGoodbodyOpper} for a more detailed discussion.
\end{rem}

\begin{rem}\label{rem: complete derived invariants}The graded gentle algebra describing the Fukaya category in \Cref{thm: gentle algebras as generators} is far from unique. It was shown in \cite{AmiotPlamondonSchroll, OpperDerivedEquivalences} for ungraded gentle algebras, in \cite{JinSchrollWang} for homologically smooth graded gentle algebras, and in \cite{OpperGradedGentle} for graded gentle algebras which are homologically smooth or proper, that the perfect derived categories of such graded gentle algebras are equivalent if and only if $\Sigma_A \cong \Sigma_B$ as graded marked surfaces. The latter means that there is a diffeomorphism $\psi\colon\Sigma_A \rightarrow \Sigma_B$ such that $\psi(\cM_A)=\cM_B$ and such that the line field defined by pulling back $\eta_B$ along $\psi$ is homotopic to $\eta_A$. An equivalent formulation is that derived equivalence classes of graded gentle algebras are in bijection with orbits of $(\Sigma, \cM, \eta)$ under the action of the mapping class group of $(\Sigma, \cM)$ consisting of isotopy classes of diffeomorphisms $\Sigma \rightarrow \Sigma$ which map $\cM$ to itself. By \cite{LekiliPolishchukGentle}, there is a small number of quite explicit invariants which allow to \textit{decide} if the graded marked surfaces of $A$ and $B$ lie in the same orbit and hence allow to decide if $\Perf(A) \simeq \Perf(B)$. Due to the results of \cite{BoothGoodbodyOpper}, this is equivalent to the existence of an equivalence $\Dfd{A} \simeq \Dfd{B}$.
\end{rem}

\subsection{Paths and anti-paths}
\noindent We fix a graded gentle quiver $(Q,I)$ and its associated graded gentle algebra $A=\Bbbk Q/ I$.

\begin{definition}A path $p=\alpha_1 \cdots \alpha_l$, $\alpha_i \in Q_1$, in $Q$ is called 
	\begin{itemize}
		\item an \textbf{anti-path of $A$} if $l(p)=0$ or $\alpha_i \alpha_{i+1} \not\in I$ for all $1 \leq i < l$.
		\item \textbf{path of $A$} if $l(p)=0$ or $\alpha_i \alpha_{i+1} \in I$ for all $1 \leq i < l$.
	\end{itemize}
An anti-path (resp.~path)  $p$ of $A$ as above is \textbf{complete} if $s(p)=t(p)$ and $\alpha_l \alpha_1 \not \in I$ (resp.~$\alpha_l \alpha_1 \in I$). In particular, any positive power of a complete anti-path (resp.~path) is a complete anti-path (resp.~path). We denote by $\rot(p)=\alpha_2 \cdots \alpha_l \alpha_1$ the \textbf{rotation} of $p$ and by $\rot^n(p)$ the $n$-fold iteration of this process which is defined for negative $n$ in the obvious way with $\rot^{-n}$ being inverse to $\rot^n$. The equivalence classes of the equivalence  relation generated by $p \sim \rot(p)$ will be called rotation classes.   An anti-path (resp.~path) $p$ of $A$ is \textbf{maximal} if it is not properly contained in a longer anti-path (resp.~path) of $A$. For a maximal anti-path $p$, there exists at most one path $\para{p}$ of $A$ such that $s(\para{p})=s(p)$, $t(\para{p})=t(p)$ and such that the first (resp.~last) arrow of $\para{p}$ is different from the first (resp.~last) arrow of $p$. If no such path exists, we write $\para{p}=\emptyset$.
\end{definition}
\noindent By definition, any rotation of a complete (anti-)path $p$ is again a complete (anti-)path and we call the smallest number $n \geq 1$ such that $\rot^n(p)=p$ the \textbf{period} of $p$ while we refer to the quotient $e(p) \coloneqq \frac{l(p)}{n}$ as the \textbf{exponent} of $p$. A complete (anti-)path with exponent $1$ will be called \textbf{primitive}.

 For a complete path $q$ of $A$, define $\omega(q)\coloneqq - |q|$, $n(q)\coloneqq \infty$ and for an a complete anti-path $p$ define $\omega(p) \coloneqq |p|- l(p)$ and $n(p)=0$. Note that in both cases $\omega(x^n)=n \omega(x)$ and $\omega(-)$ only depends on the rotation class of a complete path or anti-path.

\subsection{Smoothness and properness}

As before we regard graded algebras as dg algebras with trivial differential. We recall that a dg algebra $B$ is \textbf{homologically smooth} if the diagonal dg $B$-bimodule associated to $B$ is is a compact object in $\cD(B \otimes B)$, the derived category of dg $B$-bimodules. Moreover, $B$ is \textbf{proper} if its total cohomology $\HH^{\bullet}(A)$ is a finite-dimensional vector space. In particular, a graded algebra is proper in this sense if and only if it is finite-dimensional. There easy straightforward characterisations of these properties for graded gentle algebras.

\begin{prp}[{\cite[Proposition 3.4]{HaidenKatzarkovKontsevich}}]Let $(Q,I)$ be a graded gentle quiver and let $A$ denote the associated graded gentle algebra. Then,
	\begin{enumerate}
		\item $A$ is homologically smooth if and only if  $(Q,I)$ contains no complete anti-paths, or equivalently, $\Sigma_A$ has no unmarked boundary components;
		\item $A$ is proper if and only if $(Q,I)$ contains no complete paths, or equivalently, $\Sigma_A$ has no fully marked boundary components. 
	\end{enumerate}
\end{prp}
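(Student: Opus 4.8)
The plan is to prove each of the two biconditionals in two independent steps: first relating the ring‑theoretic property to a combinatorial condition on $(Q,I)$, then relating that condition to the boundary structure of $\Sigma_A$. It is convenient to work with the monomials of $A=\Bbbk Q/I$ — the paths of $Q$ not factoring through a relation, which by the definitions above are exactly the anti-paths of $A$ (including the $e_x$) — and with the paths of relations, i.e.\ the paths of $A$. With this reading the statements to be shown are: $A$ is proper $\iff$ $(Q,I)$ has no complete anti-path $\iff$ $\Sigma_A$ has no fully marked boundary component; and $A$ is homologically smooth $\iff$ $(Q,I)$ has no complete path $\iff$ $\Sigma_A$ has no unmarked boundary component. (If the displayed statement interchanges the words ``path'' and ``anti-path'', interchange them throughout; the argument is symmetric.)

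For properness, $A$ is proper exactly when $\dim_\Bbbk A<\infty$, i.e.\ when there are finitely many anti-paths. By gentle axiom (3) an arrow has at most one anti-path successor, so every anti-path is an initial segment of a uniquely determined walk avoiding all relations; hence either all anti-paths have length at most $|Q_1|$, or some anti-path $\alpha_1\cdots\alpha_N$ revisits an arrow, $\alpha_i=\alpha_j$ with $i<j$, and then $\alpha_i\cdots\alpha_{j-1}$ is a cyclic anti-path whose seam pair $\alpha_{j-1}\alpha_i=\alpha_{j-1}\alpha_j$ again avoids $I$ — a complete anti-path. Conversely a complete anti-path $p$ yields the distinct monomials $p,p^{2},p^{3},\dots$ (the seam condition makes each $p^{n}$ an anti-path). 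So $A$ is proper iff $(Q,I)$ has no complete anti-path. The second equivalence is then read off the construction of $\Sigma_A$: its boundary components are indexed, via the Avella-Alaminos--Geiss data, by the rotation classes of the complete paths and complete anti-paths of $A$, the fully marked ones being precisely those coming from complete anti-paths.

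For homological smoothness, $A$ is smooth iff the diagonal bimodule is a perfect complex over $A^{e}=A\otimes_\Bbbk A^{\op}$, equivalently iff $\mathrm{pd}_{A^{e}}A<\infty$, equivalently iff the minimal graded projective $A^{e}$-resolution of $A$ is bounded. For a gentle algebra that minimal resolution is the graded Bardzell resolution, whose term in homological degree $n$ is $\bigoplus_{q} A e_{t(q)}\otimes e_{s(q)}A$ indexed by the paths $q$ of $A$ of length $n$ (degrees $0$ and $1$ recovering vertices and arrows). The same deterministic-walk/pigeonhole argument, now invoking gentle axiom (2) (each arrow has at most one relation successor), shows that $(Q,I)$ has paths of $A$ of unbounded length iff it has a complete path. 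Hence the Bardzell resolution is bounded iff $(Q,I)$ has no complete path, i.e.\ $A$ is homologically smooth iff $(Q,I)$ has no complete path; and the construction of $\Sigma_A$ identifies this, in turn, with the absence of unmarked boundary components.

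The deterministic-walk dichotomy is routine; the two substantive inputs are (a) that the minimal bimodule resolution of a gentle algebra is the Bardzell resolution and that perfectness over $A^{e}$ is detected by finiteness of $\mathrm{pd}_{A^{e}}$ — standard, but this is what makes the smoothness clause work — and (b) the explicit dictionary between $\Sigma_A$ and the combinatorics of $(Q,I)$, i.e.\ the recipe assembling $\Sigma_A$ from an arc system / the AAG invariant, which is exactly what pairs complete anti-paths with fully marked boundary components and complete paths with unmarked ones. Since the proposition is quoted from \cite{HaidenKatzarkovKontsevich}, in practice one just cites it; the above is how I would justify it directly.
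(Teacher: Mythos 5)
The paper gives no proof of this proposition (it is quoted from Haiden--Katzarkov--Kontsevich), so there is no internal argument to compare against; what you write is the standard proof and it is correct in substance. Properness is finite-dimensionality, hence finiteness of the set of relation-avoiding paths, which by the determinism of gentle quivers plus pigeonhole fails exactly when a relation-avoiding cycle exists; smoothness is perfection of the diagonal bimodule, detected by boundedness of the minimal (Bardzell) bimodule resolution, whose degree-$n$ term is indexed by the length-$n$ paths all of whose length-two subwords lie in $I$, and such paths have unbounded length exactly when a cycle of that kind exists; and the boundary-cycle dictionary pairs relation-avoiding cycles with fully marked components and relation-cycles with unmarked ones. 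All three steps are right, and the two inputs you leave as citations (minimality of the graded Bardzell resolution, and the equivalence of perfection over $A^e$ with boundedness of that resolution, whose terms are finitely generated since $Q$ is finite) are exactly the standard ones.

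Your parenthetical hedge about interchanging ``path'' and ``anti-path'' resolves in your favour. The definition in the paper attaches the conditions $\alpha_i\alpha_{i+1}\notin I$ and $\alpha_i\alpha_{i+1}\in I$ to ``anti-path of $A$'' and ``path of $A$'' the opposite way from how the terms are used in the rest of the text: for the stated Bardzell differential to satisfy $d^2=0$, every $u=\alpha_1\cdots\alpha_n\in\cP_n$ must have $\alpha_1\alpha_2,\ \alpha_{n-1}\alpha_n\in I$, so $\cP_n$ has to consist of relation-paths even though it is introduced as the set of anti-paths; similarly the stated correspondence of complete paths with fully marked components only works for relation-avoiding cycles. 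So the intended convention is ``anti-path'' $=$ all consecutive products in $I$ and ``path of $A$'' $=$ no consecutive product in $I$; under that reading the proposition is stated correctly and your argument, with the two words interchanged exactly as you anticipated, proves it. This is a defect of the paper's definition, not of your proof.
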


\subsection{The AAG invariant and boundary components}\label{sec: boundary components AAG}

For the next definition, we formally adjoin inverses $\alpha^{-1}$ of arrows $\alpha \in Q_1$ to our quiver so that $s(\alpha^{-1})=t(\alpha)$ and $t(\alpha^{-1})=s(\alpha)$. This yields an extended quiver $Q^{\pm} \supseteq Q$ in which a path is called \textbf{reduced} if it does not contain a subpath of the form $\alpha \alpha^{-1}$ or $\alpha^{-1} \alpha$ for $\alpha \in Q_1$. The assignment $\alpha \in \alpha^{-1}$ can be extended uniquely to an anti-involution\footnote{An \textit{anti-involution} of a quiver $E$ is an isomorphism $\iota\colon E \rightarrow E^{\op}$ such that $\iota^{\op} \circ \iota=\operatorname{Id}_E$, where $E^{\op}$ denotes the opposite quiver obtained from $E$ by inverting the orientations of all arrows and $\iota^{\op}\colon E^{\op} \rightarrow (E^{\op})$ denotes the opposite of $\iota$. In other words, $\iota$ induces a bijection on vertices and paths from $E$ to itself but maps a path from $x$ to $y$ in $E$ to a path from $\iota(y)$ to $\iota(x)$ in $E$.} $Q^{\pm}$. In particular, a path $p=p_1 \cdots p_l$ in $Q \subseteq Q^{\pm}$ $p^{-1}=p_l^{-1} \cdots p_1^{-1} \in Q^{\pm}$.

\begin{definition}\label{def: boundary cycle} A \textbf{boundary cycle} of $A$ is either a complete anti-path of $A$, a complete path of $A$, or a closed path $c=p_1 q_1 p_2 \cdots p_r q_r$ in $Q^{\pm}$, $r \geq 1$, with $p_{r+1}=p_1$ and $q_{r+1}=q_1$ such that
\begin{itemize}
	\item each $q_i$ is a path of $A$,
	\item each $p_i$ is the inverse of an anti-path of $A$,
	\item $c$ is reduced 
	\item for all $1 \leq i \leq r$, $s(p_i)=t(q_i)$ and $s(q_i)=t(p_{i+1})$.
\end{itemize}
Define $n(c)\coloneqq r$ and $\omega(c)\coloneqq r + \sum_{i=1}^r (|q_i| - l(p_i)))$. One identifies boundary cycles which agree up to rotation, that is, $c=p_1q_1 \cdots p_rq_r$ is considered equivalent to $p_2q_2 \cdots p_{r}q_rp_1q_1$.
\end{definition}
\noindent  Boundary cycles were also called \textit{combinatorial boundary components} in \cite{LekiliPolishchukGentle}. We note that the condition that $c$ is reduced is equivalent to the assumption that  first arrow of $p_i$ is not the inverse of the last arrow of $q_i$ and that the last arrow of $p_{i+1}$ is not the inverse of the first arrow of $q_i$ for each $1 \leq i \leq r$. 
\begin{prp}[{cf.~\cite{LekiliPolishchukGentle, OpperDerivedEquivalences, ChaparroSchrollSolotarSuarezAlvarez}}]
There is a bijection $c \mapsto B_c$ between the set of equivalence classes of boundary cycles of $A$ and the set of boundary components of $\Sigma_A$.
\end{prp}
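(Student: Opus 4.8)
The plan is to read the statement off the explicit combinatorial construction of $\Sigma_A$ from $(Q,I)$. In the ungraded case the bijection (together with the identification of the AAG data) is essentially the one recorded in \cite{LekiliPolishchukGentle, OpperDerivedEquivalences, ChaparroSchrollSolotarSuarezAlvarez}, so the real task is to organise that construction in the bookkeeping of \Cref{def: boundary cycle}, to see that the grading plays no role in the bijection (it only decorates the winding numbers), and to check that nothing breaks when $(Q,I)$ is neither homologically smooth nor proper. First I would recall the model, cf.~\cite{PaluPilaudPlamondon, OpperPlamondonSchroll, HaidenKatzarkovKontsevich, LekiliPolishchukGentle} and \Cref{rem: surface models graded gentle algebras}: $\Sigma_A$ is assembled by gluing one polygonal piece per vertex of $Q$ along edges indexed by the arrows, the gluing pattern being forced by the gentle conditions (1)--(4); equivalently $\Sigma_A$ deformation retracts onto a ribbon graph whose cyclic orders at vertices are dictated by $s,t$ and by membership in $I$, and $\cM$ is the union of those boundary arcs running along a ``corner at a relation''. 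The boundary components of $\Sigma_A$ are then computed by the standard face-tracing rule of a ribbon graph.

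Next I would trace a single boundary component $B$. Walking along $B$ one crosses arrows of $Q$ successively; a run of crossings along which \emph{every} turn is at a relation spells out a path of $A$, and such a run is maximal precisely because the turn at either end is \emph{not} at a relation, at which moment the walk reverses and begins spelling the inverse of an anti-path of $A$, again maximal. Hence reading off $B$ produces, up to the choice of a starting crossing, either a complete path of $A$ (this is exactly the fully marked case), or a complete anti-path of $A$ (the unmarked case), or an alternating word $p_1 q_1 \cdots p_r q_r$ with the $q_i$ maximal paths and the $p_i$ inverses of maximal anti-paths. The gentle conditions (2),(3) guarantee that each turn has a unique continuation of each type, so the word is well defined, and maximality of the runs forces it to be reduced in the sense of \Cref{def: boundary cycle}. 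This assigns to $B$ a boundary cycle $c_B$, well defined up to rotation: the only choice is the basepoint on $B \cong S^1$, the two orientations of $B$ being distinguished since $\Sigma$ is oriented and a boundary cycle is an oriented word, so no further identification is needed.

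Conversely, given a boundary cycle $c = p_1 q_1 \cdots p_r q_r$ (or a complete (anti-)path), I would glue the chain of boundary edges of the pieces indexed by the arrows occurring in $c$ in the prescribed cyclic order; conditions (1)--(4) force these edges to close up into a single embedded circle in $\partial\Sigma_A$, giving $B_c$, and the two constructions are visibly mutually inverse. Surjectivity is then automatic, since every arrow of $Q$ bounds exactly two such boundary edges, so exhausting them exhausts $\partial\Sigma_A$. Finally one checks that the auxiliary data match: $n(c)$ records the number of stops of $B_c$ (with the conventions $\infty$ for a fully marked and $0$ for an unmarked component), and $\omega(c)$ is the winding number of $B_c$, obtained by computing the latter as the intersection number of $d\gamma$ with $\eta$ along $B_c$ and summing local contributions $+1$ at each of the $r$ stops, $-l(p_i)$ along each backtracked anti-path and $+|q_i|$ along each forward path — exactly the formula of \Cref{def: boundary cycle}; in the degenerate cases this collapses to $\omega(q)=-|q|$ and $\omega(p)=|p|-l(p)$ from the previous subsection.

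The main obstacle I anticipate is the honest bookkeeping of the surface model in the general case: when $A$ is neither smooth nor proper, $\partial\Sigma_A$ carries both marked and unmarked arcs, the gluing pieces come in more than one flavour (cf.~the fringed/completed surfaces of \cite{OpperPlamondonSchroll, BoothGoodbodyOpper}), and one must verify carefully that ``maximal run at relations'' versus ``maximal run at non-relations'' translates precisely into ``arc between two consecutive stops on a partially marked component'' versus ``a fully marked or fully unmarked component'', and that the reducedness clause in \Cref{def: boundary cycle} is exactly what excludes the spurious non-reduced words one would otherwise obtain by turning back at a stop. Once this dictionary between turns-at-relations/non-relations and marked/unmarked corners is pinned down — which is where the ungraded references do the work — the remaining verifications are routine.
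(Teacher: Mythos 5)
The paper gives no proof of this proposition; it is quoted from the cited references, and your sketch is precisely the standard face-tracing argument on the ribbon-graph model that those references carry out (maximal runs at relations/non-relations spelling out the $q_i$ and $p_i^{-1}$, reducedness corresponding to genuine turns at stops, with the fully marked/unmarked components as the degenerate cases). Your outline is correct and matches the approach the paper relies on, with the detailed gluing bookkeeping appropriately deferred to the literature, exactly as the paper itself does.
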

Under this bijection, complete paths corresponds to boundary components $C$ such that $C \subseteq \cM$ and complete anti-paths correspond to boundary components $C \subseteq \partial \Sigma_A$ such that $C \cap \cM = \emptyset$. For every other boundary cycle $c$ the number $r$ in \Cref{def: boundary cycle} coincides with the number of connected components of $\cM \cap C$, where $C$ denotes the boundary component associated to $c$.  In all cases, the winding number of $C$ agrees with $\omega(c)$, cf.~\cite[Proposition 3.2.1]{LekiliPolishchukGentle} and \cite[Corollary 2.17]{OpperDerivedEquivalences}. Next, we recall the Avella-Alaminos-Geiss invariant, first defined in \cite{AvellaAlaminosGeiss}, and then extended in \cite{LekiliPolishchukGentle}.
\begin{definition}The \textbf{Avella-Alaminos-Geiss invariant} of $A$ (AAG invariant for short) is the function $\phi_A\colon (\mathbb{N}_{\geq 0} \cup \{\infty\}) \times \mathbb{Z} \rightarrow \mathbb{N}_{\geq 0}$ such that $\phi_A(n,m)$ is the cardinality of the set of boundary cycles $c$ of $A$ such that $n(c)=n$ and $m=n-\omega(c)$ whenever $n \neq \infty$.  Moreover, $\phi_A(\infty, m)$ is by definition the number of complete anti-paths $c$ with $\omega(c)=-m$. 
\end{definition}
\noindent If $A$ is homologically smooth or proper, it is known that $\phi_A$ is a derived invariant, that is, if $B$ is a graded gentle algebra such that $\Perf(A) \simeq \Perf(B)$, then $\phi_A=\phi_B$. This follows from the characterisation of derived equivalences between such graded gentle algebras, cf.~\Cref{rem: complete derived invariants}. A special role in this note is played by the values $\phi_A(n,m)$ when $n \in \{0,1, \infty\}$ which admit an interpretation in terms of $\HHH^{\bullet}(A,A)$. We note that the conditions of \Cref{def: boundary cycle} imply that if $c$ is a boundary cycle with $n(c)=1$, then either, $p_1$ is a maximal anti-path and $\para{p_1^{-1}}=q_1$, or $p_1$ is trivial and $q_1$ is a closed maximal path of $A$. 
\begin{rem}
\Cref{def: boundary cycle} differs slightly from the definition of $\phi_A$ in \cite{LekiliPolishchukGentle} which specifies $n(c)=0$ if $c$ is a complete path or a complete anti-path as opposed to $n(p)=\infty$ in \Cref{def: boundary cycle} for an anti-path $p$. Thus, if $A$ is neither homologically smooth nor proper, then \Cref{def: boundary cycle} is a slightly finer quantity from which their version can be recovered. Of course, if $A$ is homologically smooth or proper however, there are no complete paths or no complete anti-paths and \Cref{def: boundary cycle} is essentially equivalent to the one of \cite{LekiliPolishchukGentle} after replacing the values $\infty$ of $\phi_A$ with $0$ 
instead. Under Koszul duality one has the equality $\phi_{A^!}(n,m)=\phi_A(n,m)$ if $n \not \in \{0,\infty\}$ and $\phi_{A^!}(n, m)=\phi_A(\frac{1}{n}, m)$ for $n \in \{0, \infty\}$ where $\frac{1}{0}\coloneqq\infty$ and $\frac{1}{\infty}\coloneqq 0$.
\end{rem}
\begin{rem}
The set of boundary cycles can be computed algorithmically by starting with any vertex and a maximal path or anti-path $p_1$ of $A$ which contains it. Then $p_1$ can be completed uniquely to a boundary cycle. In particular, every vertex appears in at most two boundary cycles. For more details, see \cite{AvellaAlaminosGeiss} or \cite{LekiliPolishchukGentle}.
\end{rem}

\begin{rem}If $A$ is a proper graded gentle algebra, then $\phi_A(m,n)$ counts orbits of isomorphism classes of special collections of $(n,n-m)$-fractional Calabi-Yau objects, that is, objects $X \in \Perf(A)$ such that $\mathbb{S}^nX \cong X[n-m]$, where $\mathbb{S}$ denotes a Serre functor. This interpretation of $\phi_A$ is found originally in \cite{AvellaAlaminosGeiss} in the ungraded case and \cite{OpperDerivedEquivalences} for arbitrary gradings. If $(n,m) \neq (1,1)$, then all $(n,n-m)$-fractional Calabi-Yau objects are special in this sense and the derived Morita invariance of Hochschild cohomology gives another proof of the derived Morita invariance of $\phi_A(n,m)$ for all $(n,m) \neq (1,1)$. 
\end{rem}

\section{Hochschild cohomology of graded gentle algebras}

\subsection{Bigraded Hochschild cohomology of a graded algebra}\label{sec: Bigraded Hochschild cohomology}
\noindent  We follow the sign conventions from \cite{RoitzheimWhitehouse} which specialise to those in \cite{ChaparroSchrollSolotarSuarezAlvarez}. Throughout the section, we fix a graded algebra $B$ which we view as a dg algebra with trivial differential. Its Hochschild complex $C(B)$ is the complex whose $d$-th homogeneous component is given by $$\prod_{n \geq 0}\Hom_{\Bbbk}^{d-n}(B^{\otimes n}, B).$$ This is naturally a normed vector space for which the subsets $\{W_jC(B)\}_{j \geq 0}$ defined via $$W_jC(B)=\prod_{n \geq j}\Hom_{\Bbbk}^{d-n}(B^{\otimes n}, B)$$ form a basis of neighbourhoods of the zero vector, similar to the usual adic topology on a local ring. This turns $C(B)$ into a Banach space. The Hochschild differential is the unique continuous differential on $C(B)$ which is determined on $f\colon B^{\otimes n} \rightarrow B$ by the formula
\begin{equation}\label{eq: differential Bar resolution}
	d(f)=(-1)^{|f|}\Big( m_B(\mathbf{1} \otimes f) + (-1)^{n+1} m_B(f \otimes \mathbf{1}) + \sum_{i=0}^{n-1} (-1)^{i+1} f(\mathbf{1}^{\otimes i} \otimes m_B \otimes \mathbf{1}^{n-1-i})\Big),
\end{equation} 
\noindent where $m_B\colon B^{\otimes 2} \rightarrow B$ denotes the multiplication, $\mathbf{1}$ denotes the identity of $B$, $|f|$ denotes the degree of homogenenity of $f$. Tensor products $g \otimes h$ of homogenenous maps $g$ and $h$ between graded vector spaces are evaluated on homogenenous elements $u, v$. using the Koszul sign rule
\begin{displaymath}
	g \otimes h(u \otimes v) = (-1)^{|u| |g|} g(u) \otimes h(v).
\end{displaymath}
\noindent The Hochschild complex is the product totalisation of a natural double complex $C^{\bullet, \bullet}(B)$. A homogeneous element in $C^{\bullet, \bullet}(B)$ of bidegree $(n, d) \in \mathbb{N} \times \mathbb{Z}$ is a homogeneous linear map $f\colon B^{\otimes n} \rightarrow B$ of degree $n+d$ which we will also refer to as the \textbf{total degree}. of $f$. The $d$-differential of $C^{\bullet,\bullet}(B)$ is trivial while the $n$-differential is simply the Hochschild differential which respects is homogenenous of degree $0$ with respect to the $d$-degree. Consequently, there is a canonical isomorphism
\begin{displaymath}
	\HHH^i(B,B) \cong\prod_{n+t=i}\HHH^{n,t}(B,B),
\end{displaymath} 
\noindent where $\HHH^{n,t}(B,B)$ denotes the $n$-th cohomology of the complex $C^{\bullet, t}(B)$ with respect to the $n$-differential (=Hochschild differential). $\HHH^{\bullet}(B,B)$ inherits the structure of a Banach space from $C(B)$. We will use this fact to speak about infinite sums in $\HHH^{\bullet}(B,B)$, or equivalently, the infinite products of elements $(x_n)_{n \geq 0}$ with $x_n \in \HHH^{n,t}(B,B)$. In this context we recall that a set of vector in a Banach space is a \textit{Schauder basis} if every vector can be written uniquely as a possibly infinite linear combination of vectors from the set. The distinction to a `normal' basis of a vector space will plays a role when studying the Hochschild cohomology of proper graded gentle algebras which are not homologically smooth.

\subsection{Gerstenhaber bracket and cup product}
As before we follows the conventions of \cite{RoitzheimWhitehouse}. We will use the short hand notation $\1=\operatorname{Id}_A$. For $f \in C^{m,r}(B)$, $g \in C^{n,s}(B)$ an $1 \leq i \leq m$, one defines
\begin{displaymath}
f \circ g \coloneqq  \sum_{i=1}^{m} (-1)^{(i-1)(n-1)} f \circ_i g,
\end{displaymath}
\noindent where
\begin{displaymath}
f \circ_i g \coloneqq  (-1)^{(m-1)(n+s-1)} f\big(\1^{\otimes (i-1)} \otimes g \otimes \1^{\otimes (n-i)} \big).
\end{displaymath}
Here, the relevant degree for the Koszul sign rule is the degree of homogeneity $s$ of $g$. Then one takes the graded commutator of $\circ$ with respect to this shifted total degree,  that is, the degree on $C(A)[1]$, which yields

\begin{displaymath}
\begin{aligned}
[f,g] & = f \circ g - (-1)^{(m+r-1)(n+s-1)} g \circ f.
\end{aligned}
\end{displaymath}
\noindent Together with the previous differential, $[-,-]$ endows  $C^{\bullet}(A)[1]$ with the structure of a dg Lie algebra. The induced graded Lie bracket on the shifted cohomology $\HHH^{\bullet +1}(B,B)$ is the usual Gerstenhaber bracket. Likewise, the cup product on Hochschild cohomology stems is induces from the chain level operation
\begin{displaymath}
f \cup g \coloneqq m_2(f \otimes g).
\end{displaymath}
Together with the previous differential, it endows $C(A)$ with the structure of a dg algebra.

\subsection{The bigraded Bardzell resolution} Throughout the section, we fix a graded gentle quiver $(Q,I)$ and its associated graded gentle algebra $A$.
For $n \geq 1$, let $\cP_n$ denote the set of anti-paths of $A$ of length $n$. In particular, $\cP_1=Q_1$ and $\cP_0 \coloneqq Q_0$. Degrees of paths endow $\Bbbk \cP_n$ with a grading which we will call the \textbf{internal grading}. In this way $\Bbbk \cP_n$ naturally becomes a graded bimodule over the semi-simple ring $E\coloneqq \Bbbk \cP_0$ and $A \otimes_{E} \cP_n \otimes_{E} A$ inherits the structure of a graded $A$-bimodule with respect to the induced internal grading. The \textbf{bigraded Bardzell resolution} of $A$ is the complex $\cR_{\bullet}=(\cR_n)_{n \geq 0}$ of graded $A$-bimodules with $$\cR_n= A \otimes_{E} \Bbbk \cP_n \otimes_{E} A,$$. Its differential $d_{n}\colon\cR_n \rightarrow \cR_{n-1}$ is determined for an anti-path $u=\alpha_1 \dots \alpha_n$, $\alpha_i \in Q_1$, by
\begin{displaymath}
	d_n(1 \otimes_E u \otimes_E 1) = \begin{cases} \alpha_1 \otimes_E \alpha_2 \cdots \alpha_n \otimes_E 1 + (-1)^n 1 \otimes_E \alpha_1 \cdots \alpha_{n-1} \otimes_E \alpha_n, & \text{if $n \geq 1$}; \\ \alpha_1 \otimes_E e_{t(\alpha_1)} \otimes_E 1 - 1 \otimes_E e_{s(\alpha_1)} \otimes_E \alpha_1, & \text{if $n=1$}; \end{cases} 
\end{displaymath}
\noindent We denote by $A^e$ and $E^e$ the respective enveloping algebras over $\Bbbk$. Then one has an isomorphism of graded $A$-bimodules 
\begin{equation}\label{eq: identification bimodules and E-linear maps}
	\begin{tikzcd}[row sep=0.5em]
		\varphi\colon\Hom_{E^e}(\Bbbk \cP_n, A) \arrow{rr}{\sim} && \Hom_{A^e}(\cR_n, A), \\
		f \arrow[mapsto]{rr} && \Big( a \otimes u \otimes b \mapsto (-1)^{|f||a|} a f(u) b \Big),
	\end{tikzcd}
\end{equation}
\noindent so that $\varphi(f)=m_A(m_A(\mathbf{1} \otimes f) \otimes \mathbf{1})=m_A(\mathbf{1} \otimes m_A(f \otimes \mathbf{1}))$. Consequently, $$C_{\cP}^{\bullet}(A)\coloneqq \big(\Hom_{E^e}(\Bbbk \cP_n, A)\big)_{n \geq 0}$$ is a cochain complex of graded $A$-bimodules. It is standard to identify $C_{\cP}^n(A)$ with the vector space spanned by the set of parallel paths $$\left(\cP_n \parallel Q\right)\coloneqq\{ (p, q) \mid  p \in \cP_n, q \not \in I \text{ path}\colon s(\alpha)=s(\beta) \wedge t(\alpha)=t(\beta)\}.$$
\noindent Explicitly, $(p, q) \in (\cP_n \parallel Q)$ is identified with $p^{\ast} \cdot q$, where $p^{\ast} \in \Hom_{E^e}(\Bbbk\cP_n, \Bbbk)$ denotes the dual of $p$ with respect to the basis $\cP_n$. In particular, $(p, q)$ corresponds to an element of bidegree $(l(p), |q|-|p|)$ in $C_{\cP}(A)$ with the first entry corresponding to the cohomological and the second corresponding to the internal degree. With this, the differential of $C_{\cP}(A)$ is given as follows. For $f=(p, q) \in \left(\cP_n \parallel Q\right)$, set $n=l(p)$ and $s\coloneqq |f|=|q| - |p|$. Then
\begin{equation}\label{eq: Bardzell differential}
	d(p, q)= d_L(p,q) + d_R(p,q) \coloneqq (-1)^{s} \Big( \sum_{\alpha} (-1)^{s |\alpha|} (\alpha p, \alpha q) - \sum_{\beta} (-1)^n (p \beta, q\beta) \Big),
\end{equation}
where the left (resp.~right) sum is indexed by those arrows $\alpha \in Q_1$ (resp.~$\beta \in Q_1$) such that $\alpha p$ (resp.~$p \beta$) lies in $\cP_{n+1}$ and $\alpha q \not \in I$ (resp.~$q\beta \not \in I$).  We note that if $n \geq 1$, there exists at most one such $\alpha$ (resp.~$\beta)$ due to the nature of relations in a gentle quiver. The term $d_L(p,q)$ (resp.~$d_R(p,q)$) corresponds to the first (resp.~second) sum in \eqref{eq: Bardzell differential} including the sign $(-1)^s$ (resp.~$- (-1)^{s}$)).

\subsection{Comparison maps between Hochschild and Bardzell complex} Analogous to the ungraded case in \cite[Section 4]{RedondoRoman} one has quasi-inverse homotopy equivalences of complexes of graded $A$-bimodules
\begin{displaymath}
	\begin{tikzcd}
		\cR_{\bullet} \arrow[shift left=0.4em]{rr}{U} && \cB_{\bullet} \arrow[shift left=0.4em]{ll}{V},  &  V \circ U=\operatorname{Id}_{\cR_{\bullet}},
	\end{tikzcd}
\end{displaymath}
\noindent where $\cB_{\bullet}=(\cB_n)_{n \geq 0}$ denotes the graded bar resolution. The maps $U$ and $V$ induces maps $U^{\ast}$ and $V^{\ast}$ between $C(A)$ and $C_{\cP}(A)$ in opposite directions with $U^{\ast} \circ V^{\ast}=\operatorname{Id}_{C_{\cP}(A)}$ and exhibit $C_{\cP}(B)$ as a retract of $C(B)$. We recall that $\cB_n=A^{\otimes_E (n+2)}$ and the differential is given by $$d_n(a_0 \otimes \cdots \otimes a_n)=\sum_{i=0}^{n}(-1)^i \mathbf{1}^{\otimes_E i} \otimes_E m_A \otimes_E \mathbf{1}^{\otimes (n-1-i)},$$
where $m_A\colon A^{\otimes 2} \rightarrow A$ denotes the multiplication map. The maps $U$ and $V$ are defined exactly as in the ungraded case, that is, for $v=1 \otimes v_1 \otimes \cdots \otimes v_n \otimes 1$ and $n \geq 2$, one has
\begin{equation}\label{EquationFormulaComparisonMap}
	V(v) = \begin{cases}v' \otimes \alpha \otimes v'', & \text{if $v_1 \cdots v_n=v'\alpha v''$, where $\alpha \in \cP_n$, $v_i, v',v'' \not \in I$;} \\ 0, & \text{otherwise.}  \end{cases}
\end{equation}
as well as
\begin{displaymath}\label{EquationFormulaComparisonMap}
	\begin{aligned}
		V^1(1 \otimes v \otimes 1) \coloneqq & \begin{cases}0, & \text{if $l(v)=0$;} \\ \sum_{i=1}^{l(v)}\alpha_{1} \cdots \alpha_{i-1} \otimes \alpha_i \otimes \alpha_{i+1} \cdots \alpha_n, & \text{if $v=\alpha_{1} \cdots \alpha_n$, $\alpha_j \in Q_1$.} 
		\end{cases} \\ \\
		V^0(1 \otimes x \otimes 1)= & 1 \otimes x \otimes 1,
	\end{aligned}
\end{displaymath}
\noindent for $x \in Q_0$ and under the identification $A^{\otimes_E 2}\cong A \otimes_E E \otimes_E A$. The map $U$ is easier to describe and for $u=\alpha_1 \cdots \alpha_n \in \cP_n$, $n \geq 1$, is given by
\begin{displaymath}
	U(1 \otimes u \otimes 1)=1 \otimes \alpha_1 \otimes \cdots \otimes \alpha_n \otimes 1,
\end{displaymath} 
\noindent and $U_0(1 \otimes x \otimes 1)= 1 \otimes x \otimes 1$ for all $x \in Q_0$, again under the above identifications. Taking into account the Koszul sign rule, one recovers  formula \eqref{eq: differential Bar resolution} after applying $\Hom_{A^e}(-, A)$ and the identification of morphisms between free $A$-bimodules with certain $E$-linear homomorphisms\footnote{The sign arising upon evaluation from $m_B(\mathbf{1} \otimes f)$ in \eqref{eq: differential Bar resolution} is a consequence of this identification.} as in \eqref{eq: identification bimodules and E-linear maps}.

\subsection{Induced operations on the Bardzell resolution} The maps $U^{\ast}$ and $V^{\ast}$ allow to pull back binary operations $\star$ such as $\circ_i$, $[-,-]$ and $\cup$ from $C(A)$ to operations on $C_{\cP}(A)$ via the formula $$f \star g \coloneqq U^{\ast}\big(V^{\ast}(f) \star V^{\ast}(g)\big),$$
\noindent where $f, g \in C_{\cP}(A)$. We now give explicit formulae for the induced operations $\circ_i$ and $\cup$ on $C_{\cP}(A)$ which generalise those in \cite[Section 4.2]{RedondoRoman}.

For a more compact notation, we will choose the elementary tensors of the form $q_1 \otimes \cdots q_n$, where $q_1, \dots, q_n \not \in I$ are composable paths in $Q$, as a basis of the vector space $A^{\otimes_E n}$. If $q$ is a path with $t(q)=t(q_1)$ and $s(q)=s(q_n)$, we write $(q_1 \otimes \cdots \otimes q_n ,q)$ for the dual element in $C^{n, \bullet}(A)$ which maps $q_1 \otimes \cdots q_n$ to $q$ and all other basis elements of $A^{\otimes_E n}$ to zero. Every element in $C^{n,\bullet}(A)$ is a (possibly infinite) sum of such dual elements. The necessity for infinite sum occurs when $A$ is infinite-dimensional as $C^{n, \bullet}(A)$ is an infinite product of copies of $A$ in this case.  In what follows, we will consider a pair $(u,v)$ with $u$ and $v$ paths in $Q$ as the zero element in $C_{\cP}(A)$ is $u$ is not a well-defined path, not an anti-path, or $v=0$ in $A$. For homogeneous $(p_1, q_1), (p_2, q_2) \in C_{\cP}(A)$ this means for example that we understand $(p_1p_2, q_1q_2)=0$ if $s(p_1) \neq t(p_2)$, $\alpha_m \beta_1 \in I$, or, if $q_1 q_2 \in I$.  We note that the total degree of $(p, q) \in C_{\cP}(A)$ is $|(p,q)| \coloneqq l(p) + |q| - |p|$.\medskip

\noindent  We note that for any homogeneous $(p,q)=(\alpha_1 \cdots \alpha_n, q ) \in C_{\cP}^{n,s}(A)$, $n \geq 2$, \begin{displaymath}
V^{\ast}((p,q)) = \sum_{u, v} (u \alpha_1 \otimes \alpha_2 \otimes \cdots \otimes \alpha_{n-1} \otimes \alpha_n v , uqv),
\end{displaymath}
\noindent where the (possibly infinite) sum is indexed by all (possibly trivial) paths $u, v$ such that $u \alpha_1, \alpha_n v \not \in I$ and $u q v \not \in I$. For $n=1$, we have $V^{\ast}((p,q))=(\alpha_1 \otimes \cdots \alpha_n ,q)$ and also $V^{\ast}((x, q))=(x, q)$ for $x \in Q_0$. $U^{\ast}$ maps an element $(q_1 \otimes \cdots \otimes q_m, q) \in C^{m, \bullet}(A)$, $m \geq 1$ to zero unless $q_1, \dots, q_{m}$ are arrows and $q_1 \cdots q_m$ is an anti-path. Finally, $U^{\ast}((x, q))=(x,q)$, if $x \in Q_0$.\medskip

\noindent The operations $\cup$ and $\circ_i$ on $C_{\cP}(A)$ are now given as follows. Explicitly, for $(p_1, q_1) \in C_{\cP}^{m,r}(A)$ and $(p_2, q_2) \in C_{\cP}^{n,s}(A)$ with $p_1=\alpha_1 \cdots \alpha_m$ and $p_2=\beta_1 \cdots \beta_n$, $\alpha_i, \beta_j \in Q_1$, and for $1 \leq i \leq m$, one obtains
\begin{equation}\label{eq: cup product}
(p_1, q_1) \cup (p_2, q_2) = (-1)^{(|q_2|-|p_2|)|p_1|}(p_1 p_2, q_1 q_2),
\end{equation} 
For $\circ_i$ one has four cases. First,

\begin{equation}\label{eq: composition product 1}
(p_1, q_1) \circ_i (p_2, q_2)= (-1)^{(m-1)(n+s-1) + |\alpha_1 \cdots \alpha_{i-1}|(|q_2|-|p_2|)} (\alpha_1 \cdots \alpha_{i-1}p_2 \alpha_{i+1} \cdots \alpha_m, q_1)
\end{equation}
\noindent whenever $1 < i < m$ and $q_2=\alpha_i$. Secondly,
\begin{equation}\label{eq: composition product 2}
(p_1, q_1) \circ_1 (p_2, q_2)= (-1)^{(m-1)(n+s-1)} (p_2 \alpha_2 \cdots \alpha_m, uq_1) 
\end{equation}
\noindent if $l(p_1) \geq 2$, $l(q_2) \geq 2$ and $q_2=u\alpha_1$ for some path $u$ which necessarily does not lie in $I$. The next case is
\begin{equation}\label{eq: composition product 3}
(p_1, q_1) \circ_{m} (p_2, q_2)= (-1)^{(m-1)(n+s-1) + |\alpha_1 \cdots \alpha_{m-1}|(|q_2|- |p_2|)} (\alpha_1 \cdots \alpha_{n-1} p_2, q_1 u), 
\end{equation}
\noindent if $l(p_1) \geq 2$, $l(q_2) \geq 2$ and $q_2=\alpha_m u$ for some path $u$ which necessarily does not lie in $I$. The last case is a mix of the previous two, when $l(p_1)=1$ so that $m=1$ in which case 
\begin{equation}\label{eq: composition product 4}
(p_1, q_1) \circ_1 (p_2, q_2)= (-1)^{(m-1)(n+s-1)} (p_2, uq_1v) 
\end{equation}
if $q_2=u\alpha_1 v$ for some paths $u, v$. All expressions $f \circ_i g$ which do not fall in any of the above cases are zero.

\subsection{Computations of the Hochschild cohomology}		

\noindent We now give a description of the Hochschild cohomology of a graded gentle algebra as a vector space. Our proof is based on the graph associated to the Bardzell differential which appears naturally from the perspective of discrete Morse theory, cf.~\cite{Tamaroff}. This allows us to streamline combinatorial arguments using the combinatorics of this graph. 

\subsubsection{The graph and canonical decompositions}
Let $\overline{G}$ denote the quiver with vertices $\overline{G}_0 \coloneqq \{0\} \sqcup \bigsqcup_{n \geq 0}\left( \cP_n \parallel Q\right)$ which has an arrow $x \rightarrow y$ labelled with $L$ (resp.~$R$) whenever $y$ appears with non-trivial coefficient when expressing $d_L(x)$  (resp.~$d_R(x)$) as a linear combination of elements from $\overline{G}_0 \setminus \{0\}$. We also add an arrow $x \rightarrow 0$ labelled $L$ (resp.~$R$) if $d_L(x)=0$ (resp.~$d_R(x)=0$). We denote by $G \subseteq \overline{G}$ the full subquiver spanned by $\{0\}$ and all vertices $(p,q)$ such that $l(p)+l(q)\geq 1$, that is, we exclude vertices of the form $(x,x)$, $x \in Q_0$, in $G$. We also consider the full subquivers $G^{\star} \subseteq G$ and $\overline{G}^{\star} \subseteq G$ spanned by the non-zero vertices in each case. We record a few elementary observations:

\begin{enumerate}
	\item Arrows in $\overline{G}$ are uniquely determined by their target and label.
	\item $\overline{G}$ has no oriented cycles and its arrows define a partial order on its set of vertices with $x < y$ if there exists a path from $x$ to $y$. It has $0$ as its largest element and each $(x,x)$, $x \in Q_0$, is a minimal element. The 'height maps' $h\colon \overline{G}_0 \setminus \{0\} \rightarrow \mathbb{N}$, $(p,q) \mapsto l(p)$ and $h'\colon \overline{G}_0 \setminus \{0\} \rightarrow \mathbb{N}$, $(p,q) \mapsto l(q)$  are order preserving.  
	\item The quiver $G$ is a binary tree. More precisely, every vertex $v=(p,q)$ of $G$ is the source of exactly two arrows, one with label $L$ and the other with label $R$. Every vertex in $\overline{G}^{\star}$ is the target of at most two arrows and if there are two, then they have different labels. Vertices $(x,x)$, $x \in Q_0$ are source of at most $4$ arrows with at most $2$ of each label.
	
	\item The passage from $x$ to $d_L(x)$ and $d_R(x)$ induces endomorphisms of partially ordered sets $L, R \colon G_0 \rightarrow G_0$ with $L^2=R^2$ equal to the constant map $G_0 \rightarrow \{0\}$ and $R \circ L=L \circ R$. More generally, for any subset  $U \subseteq \overline{G}_0$ let $L(U)$ (resp.~$R(U)$) denote the set of  vertices $y$ which appear as targets of an arrow  with label $L$ and source in $U$. Then, $L^2(U)=\{0\}=R^2(U)$ and $L(R(U))=R(L(U))$ for all $U \subseteq \overline{G}_0$.\\[0.5em]
	
	\item If $v \in \overline{G}_0$ such that $0 \neq u \in L(v) \cap R(v)$, then $v=(x,x)$ for some $x \in Q_0$ and there exists a loop $\alpha \in Q_1$ with $s(\alpha)=t(\alpha)=x$. In particular, if $v \in G_0$, then $\{L(v), R(v)\}$ is linearly independent or contains $0$.
\end{enumerate}

\begin{definition}
For $v \in \overline{G}^{\star}$, a vertex $w$ such that $v \in L(w) \cup R(w)$ is called a \textbf{parent} of $v$ and $v$ is a \textbf{child} of $w$. Thus, any two vertices have at most one common child and every vertex has at most two parents. Parents who have a common child be called \textbf{neighbours}. This defines an equivalence relation on vertices whose equivalence classes we call \textbf{neighbourhoods}. Thus, a neighbourhood is a subset $V \subseteq \overline{G}^{\star}$ which contains the neighbours of each of its elements. The neighbourhood of a subset $V \subseteq \overline{G}^{\star}$ is the smallest neighbourhood which contains $V$. A neighbourhood is \textbf{atomic} if it coincides with the neighbourhood of each of its elements. 
\end{definition}

\noindent We use the properties (1)-(5) above to construct a special decomposition of every non-zero $f \in C_{\cP}(A)$. Write $f$ uniquely as a finite sum $f=\sum_{j \in J} \lambda_j (p_j, q_j)$ such that $\lambda_j \in \Bbbk^{\times}$ and $(p_i, q_i) \neq (p_j, q_j)$ for all $i \neq j \in J$.  Then we define $V(f)\coloneqq \{(p_j, q_j) \mid j \in J\} \subseteq \overline{G}^{\star}$ and refer to its cardinality $|J|$ as the \textbf{rank} of $f$. In particular,  for all $f \in C_{\cP}(A)$, $V(d(f)) \subseteq L(V(f)) \cup R(V(f))$. For $v=(p_j, q_j) \in V(f)$ define $\lambda_f(v)\coloneqq\lambda_j$. Any partition $V(f)=\bigsqcup_{x \in \chi} V_x$, $V_x \neq \emptyset$ yields a sum decomposition $f=\sum_{x \in \chi} f_x$, $f_x= \sum_{j \in V_x}\lambda_j (p_j, q_j)$. Define $\overline{G}(f)$ as the full subquiver spanned by the vertices in $V(f)$ as well as their children.  The intersection of $\overline{G}(f)$ with $G$ and $G^{\star}$ defines full subquivers $G(f)$ and $G^{\star}(f)$ of $G$ and $G^{\star}$ respectively. The set $V(f)$ decomposes into disjoint level sets $V_{n,n'}(f)\coloneqq h^{-1}(n) \cap h^{\prime -1}(n') \cap V(f)$.  Another partition arises from the connected components of the underlying graph of $G^{\star}(f)$. By abuse of language we refer to the corresponding summands of $f$ as its \textbf{connected components}. We then say that $f$ is \textbf{connected} if it has a single connected component and it is \textbf{levelled} if $V(f)=V_{n,n'}(f)$ for some pair $n, n' \in \mathbb{N}$. By decomposing each non-empty $V_{n,n'}(f)$ into its connected components, we obtain a partition of $V(f)$ and a sum decomposition of $f=\sum_{j \in J}f_j$ such that each $f_j$ is connected and levelled. We call this the \textbf{canonical partition} and \textbf{canonical decomposition} respectively. The elements $f_i$ will be called the \textbf{atoms} of $f$ and $f$ is \textbf{atomic} if it is the sum of a single atom. Immediate consequences of these definitions are the following.

\begin{lem}If $f \in C_{\cP}(A)$ is levelled, then so are $d(f), d_L(f)$ and $d_R(f)$ whenever they are non-zero. If $f$ is atomic, then so is the neighbourhood of $V(f)$.
\end{lem}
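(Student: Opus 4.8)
The plan is to reduce both claims to the bookkeeping of the maps $L$ and $R$ recorded in properties (1)--(5) above, together with the explicit shape of the differential in \eqref{eq: Bardzell differential}.

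For the first claim, observe that every nonzero summand of $d_L(p,q)$ has the form $(\alpha p,\alpha q)$ and every nonzero summand of $d_R(p,q)$ has the form $(p\beta, q\beta)$; in either case the new anti-path has length $l(p)+1$ and the new parallel path has length $l(q)+1$. Equivalently, the maps $L,R\colon \overline{G}_0\setminus\{0\}\to\overline{G}_0$ raise both height functions $h$ and $h'$ by exactly $1$. Hence if $f$ is levelled with $V(f)\subseteq V_{n,n'}(f)$, then each nonzero term of $d_L(f)$, of $d_R(f)$, and therefore (since $d(f)=d_L(f)+d_R(f)$) of $d(f)$, lies in $V_{n+1,n'+1}$; so all three are levelled whenever they do not vanish. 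I expect this step to be immediate.

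For the second claim, I would first unwind the word \emph{atomic} on both sides. If $f$ is atomic it is, by construction of the canonical decomposition, connected and levelled: $V(f)$ lies in a single level $V_{n,n'}(f)$ and forms one connected component of the underlying graph of $G^\star(f)$, whose vertices are the elements of $V(f)$ together with their children. Since $L$ and $R$ shift the level by $(1,1)$, all those children lie in level $(n+1,n'+1)$, so $G^\star(f)$ is bipartite with parts $V(f)$ and a subset of $V_{n+1,n'+1}$, and every edge runs between the two parts. Consequently any path in $G^\star(f)$ joining two elements $v,v'\in V(f)$ alternates between $V(f)$ and children, and each length-two segment $v_i - c - v_{i+1}$ of it exhibits $v_i$ and $v_{i+1}$ as two parents of the common child $c$, hence as neighbours. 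Thus any two elements of $V(f)$ are linked by a chain of neighbours lying entirely inside $V(f)$, so $V(f)$ is contained in a single neighbourhood class $C$. Finally $C$ is itself a neighbourhood containing $V(f)$ and is contained in every neighbourhood that meets it, so the neighbourhood of $V(f)$ equals $C$; and $C$ is atomic because for each $v\in C$ the neighbourhood of $\{v\}$ is again $C$.

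The only genuinely delicate point is the treatment of degenerate vertices: a child of some $v\in V(f)$ may be the formal symbol $0$ (when $d_L(v)=0$ or $d_R(v)=0$) and is then discarded from $G^\star(f)$, and one should use property (5) to ensure that no vertex of $V(f)$ is an excluded pair $(x,x)$ once $f$ has positive cohomological degree --- or else dispose of that trivial case directly. Neither of these affects the alternating-path argument, so I do not anticipate a real obstacle; the lemma is essentially a restatement of the order-theoretic structure described in (1)--(5).
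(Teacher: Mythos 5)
Your proof is correct, and it matches the paper's treatment: the paper states this lemma without proof as an "immediate consequence of the definitions," and your argument is precisely the definitional unwinding one would supply — $L$ and $R$ raise both height functions by one (giving the levelledness of $d_L(f)$, $d_R(f)$, $d(f)$), and connectedness of $G^{\star}(f)$ forces $V(f)$ into a single neighbour-equivalence class via alternating paths through common children. Your closing remark is also right that the degenerate vertices $0$ and $(x,x)$ cause no trouble, since $0$ is excluded from $G^{\star}(f)$ and the $(x,x)$ case leaves $V(f)$ a single level anyway.
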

\noindent Now, one also easily verifies the following property.

\begin{lem}\label{lem: differentials atomic element linearly independent}Let $0 \neq f \in C_{\cP}(A)$ and let $V(f)=\bigsqcup_{j \in J}V_j$ be the canonical partition. Then for all $i \neq j \in J$, $$\Big(L(V_i) \cup R(V_i)\Big) \cap \Big(L(V_j) \cup R(V_j)\Big)= \emptyset.$$ 
	In other words, $V_i$ and $V_j$ have no common children. \end{lem}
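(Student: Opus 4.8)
The plan is to argue by contradiction, exploiting the tree structure of $G$ recorded in observations (1)--(5) together with the definition of the canonical partition. Suppose $V_i$ and $V_j$ ($i \neq j$) have a common child, i.e. there exist $v \in V_i$ and $w \in V_j$ and a non-zero vertex $u \in \bigl(L(v)\cup R(v)\bigr)\cap\bigl(L(w)\cup R(w)\bigr)$. By observation (5), distinct parents of a non-zero vertex $u$ are \emph{neighbours} in the sense of the Definition, and by observation (2) they have the same value $h(\cdot)=l(p)$ and $h'(\cdot)=l(q)$ up to the $\pm1$ shifts dictated by whether the connecting arrow carries label $L$ or $R$. The first step is therefore to pin down, using observations (1)--(3), that two parents $v,w$ of the same child $u$ must in fact lie on the \emph{same} level set $V_{n,n'}$: a vertex $v=(p,q)$ has exactly two children, one reached by an $L$-arrow (which increases $l(p)$ by $1$ and leaves $l(q)$ unchanged) and one by an $R$-arrow (which increases $l(q)$ by $1$ and leaves $l(p)$ unchanged); since $u$ is reached from both $v$ and $w$ and by observation (1) an arrow is determined by its target and label, $v\to u$ and $w\to u$ must carry \emph{different} labels; reading the level shifts backwards then forces $h(v)=h(w)$ and $h'(v)=h'(w)$.

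The second step is to promote ``same level set'' to ``same connected component of $G^{\star}(f)$'', which is exactly what is needed since the canonical partition of $V(f)$ is obtained by first splitting into level sets $V_{n,n'}(f)$ and then into connected components of the underlying graph of $G^{\star}(f)$. Here I would observe that $u$ is a child of both $v$ and $w$, so by construction $u\in\overline{G}(f)$ and moreover $u$ (together with the arrows $v\to u$ and $w\to u$) lies in $G^{\star}(f)$ — one must check $u\neq 0$, which holds by the standing assumption $u\neq 0$, and that $l(p_u)+l(q_u)\geq 1$, which follows since $v\in G_0$ already has $l+l'\geq 1$ and passing to a child only increases one of these. Consequently the path $v\to u\leftarrow w$ is a path in the underlying graph of $G^{\star}(f)$, so $v$ and $w$ lie in the same connected component; combined with $h(v)=h(w)$, $h'(v)=h'(w)$ from Step~1 this places $v$ and $w$ in the same block of the canonical partition, contradicting $v\in V_i$, $w\in V_j$ with $i\neq j$.

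I would then dispose of the degenerate possibility that the common child $u$ is $0$: if $0\in L(v)\cap R(v)$ for a single $v$ this is harmless, but if $0$ arises as a child of $v\in V_i$ and of $w\in V_j$ via, say, $d_L(v)=0$ and $d_L(w)=0$, then the arrows $v\to 0$ and $w\to 0$ both carry label $L$, contradicting observation (1) unless $v=w$; the mixed-label case $d_L(v)=0$, $d_R(w)=0$ does produce two distinct parents of $0$, so to handle it I would note that in the canonical decomposition $0$ is never a \emph{vertex} of $G^{\star}(f)$ (it is excluded from $G^{\star}$ by definition), hence cannot serve to connect $v$ and $w$, and that the statement $\bigl(L(V_i)\cup R(V_i)\bigr)\cap\bigl(L(V_j)\cup R(V_j)\bigr)=\emptyset$ should be read — consistently with the phrase ``$V_i$ and $V_j$ have no common children'' and with observation (5) guaranteeing common \emph{non-zero} children force the parents to be neighbours — as an assertion about the intersection of non-zero vertices. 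The main obstacle I anticipate is purely bookkeeping: making sure the $L$/$R$ level-shift arithmetic in Step~1 is stated correctly in the edge case $n=1$ where the Bardzell differential in \eqref{eq: Bardzell differential} behaves differently (the $d_1$ formula involving $e_{t(\alpha_1)}$ and $e_{s(\alpha_1)}$), and checking that vertices of the form $(x,x)$, $x\in Q_0$ — which by observation (3) may have up to four children — do not sneak into $V(f)\subseteq G^{\star}$, which they cannot since $G$ excludes them by definition; so in fact the four-parent phenomenon never occurs inside $G(f)$ and the binary-tree structure of observation (3) applies verbatim.
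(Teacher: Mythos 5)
The paper offers no written proof of this lemma (it is introduced as something ``one also easily verifies''), so I can only judge your argument on its own terms. Its skeleton is the right one: two distinct parents of a common non-zero child must carry arrows of different labels by observation (1); a non-zero common child lies in $G^{\star}(f)$ and, since $G^{\star}(f)$ is a full subquiver, the path $v\to u\leftarrow w$ connects the two parents; and the degenerate ``child'' $0$ is correctly excluded by reading the statement, as the paper's own phrase ``no common children'' and its definition of child (only for vertices of $\overline{G}^{\star}$) indicate, as an assertion about non-zero vertices. However, Step 1 contains a concrete error which, taken at face value, breaks the proof. You assert that an $L$-arrow out of $(p,q)$ raises $l(p)$ by one and fixes $l(q)$, while an $R$-arrow raises $l(q)$ and fixes $l(p)$. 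This is false: by \eqref{eq: Bardzell differential}, $d_L(p,q)$ contributes terms $(\alpha p,\alpha q)$ and $d_R(p,q)$ contributes terms $(p\beta,q\beta)$, so \emph{both} labels raise \emph{both} heights $h$ and $h'$ by exactly one -- the paper uses precisely this in the proof of \Cref{lem: atomic coycles are of type A or tilde A}, where $L(V(f))\cup R(V(f))\subseteq\{0\}\sqcup h^{-1}(n+1)\cap h'^{-1}(n'+1)$. Under your stated shift rules, parents $v\xrightarrow{L}u\xleftarrow{R}w$ would satisfy $h(v)=h(u)-1$ but $h(w)=h(u)$, i.e.\ they would lie on \emph{different} level sets; that is the opposite of the conclusion you draw from them and would make the lemma unreachable by this route. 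The correct shift rule makes Step 1 immediate, so the error is easily repaired, but as written the step does not follow from your premises.

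A second, smaller inaccuracy: you dismiss vertices $(x,x)$, $x\in Q_0$, on the grounds that they ``cannot sneak into $V(f)\subseteq G^{\star}$''. But $V(f)$ is only a subset of $\overline{G}^{\star}$, not of $G^{\star}$; for instance $f=\mathbb{1}=\sum_{x}(x,x)$ is a legitimate element of $C_{\cP}(A)$, and the lemma is stated for arbitrary non-zero $f$. Two such vertices $(x,x)$ and $(y,y)$ genuinely share the child $(\alpha,\alpha)$ whenever $\alpha$ is an arrow from $y$ to $x$, yet neither vertex belongs to $G^{\star}(f)$, so your connectedness argument in Step 2 does not apply to them. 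This edge case therefore needs separate treatment (and in fact probes an ambiguity in how the paper's canonical partition is meant to treat the level set $V_{0,0}$); your argument silently assumes it never occurs.
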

\noindent This leads to an equivalent characterisation of the canonical decomposition as the unique decomposition $f=\sum_{j \in J}f_j$ into connected, levelled elements $f_j$ so that $G^{\star}(f_j) \cap G^{\star}(f_i)=\emptyset$ for all $i \neq j \in J$. Consequently, an element is atomic if and only if it is levelled and connected. Being subsets of a basis of $\Bbbk \left(\cP \parallel Q\right)$, it follows that the two subsets in \Cref{lem: differentials atomic element linearly independent} are linearly independent which implies the following.
\begin{cor}\label{cor: differential determined by differential of atoms}
	Let $f \in C_{\cP}(A)$. Then $d(f)=0$ if and only if $d(a)=0$ for each atom $a$ of $f$.
\end{cor}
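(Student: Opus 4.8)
The plan is to read this off from the canonical decomposition together with \Cref{lem: differentials atomic element linearly independent}. Write $f = \sum_{j \in J} f_j$ for the canonical decomposition into atoms, so that $V(f) = \bigsqcup_{j \in J} V_j$ with $V_j = V(f_j)$ is the canonical partition. The ``if'' direction is trivial: $d$ is linear, hence $d(f) = \sum_{j \in J} d(f_j)$, and if each $d(f_j)$ vanishes so does $d(f)$.

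For the converse, the key observation is already recorded in the text: for any $g \in C_{\cP}(A)$ one has $V(d(g)) \subseteq L(V(g)) \cup R(V(g))$, because $d = d_L + d_R$ and, by the Bardzell differential formula \eqref{eq: Bardzell differential}, $d_L$ (resp.\ $d_R$) applied to a basis vertex $(p,q)$ is a scalar multiple of the unique vertex in $L(p,q)$ (resp.\ $R(p,q)$), or zero. Applying this to the atoms, the element $d(f_j) \in C_{\cP}(A)$ is supported on $\bigl(L(V_j) \cup R(V_j)\bigr) \setminus \{0\}$, a subset of the basis $\left(\cP \parallel Q\right)$ of the ambient space. By \Cref{lem: differentials atomic element linearly independent} these supports are pairwise disjoint for distinct $j$, so the corresponding coordinate subspaces are in direct sum position inside $\Bbbk\left(\cP \parallel Q\right)$. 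The relation $\sum_{j \in J} d(f_j) = d(f) = 0$ therefore forces $d(f_j) = 0$ for every $j \in J$, which is what we wanted.

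I do not anticipate a genuine obstacle: the statement is essentially a bookkeeping consequence of the disjointness in \Cref{lem: differentials atomic element linearly independent}, whose proof already carries the combinatorial weight (properties (1)--(5) of $\overline{G}$, in particular that an arrow is determined by its target and label). The one point worth stating carefully is that the absorbing vertex $0 \in \overline{G}_0$ --- which merely records that $d_L(x)$ or $d_R(x)$ is zero --- contributes nothing to an element of $C_{\cP}(A)$, so ``support'' must be read inside $C_{\cP}(A)$, i.e.\ inside $\Bbbk\left(\cP \parallel Q\right)$, before invoking linear independence. It would also be natural to note in passing that the same computation shows the atoms of $d(f)$ are exactly the nonzero elements among the $d(f_j)$, although only the cocycle version is used afterwards.
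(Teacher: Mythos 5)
Your argument is correct and is essentially the paper's own: the text derives the corollary directly from \Cref{lem: differentials atomic element linearly independent} by observing that the children sets of distinct atoms are disjoint subsets of the basis $\left(\cP \parallel Q\right)$, hence linearly independent, so $\sum_j d(f_j)=0$ forces each $d(f_j)=0$. Your added remark about discarding the formal vertex $0$ before invoking linear independence is a sensible clarification but not a departure from the paper's reasoning.
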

\noindent Another straightforward consequence is the compatibility of canonical decompositions with the differential.
\begin{lem}\label{cor: behaviour of canonical decompositions under differential}
	Let $f=\sum_{t \in T}f_t$ and $u=\sum_{s \in S}u_s$ be canonical decompositions of elements $f, u \in C_{\cP}(A)$. If $f=d(u)$, then there exists a unique function $\varphi\colon T \rightarrow S$ such that for every $s \in S$, $$d(u_s)=\sum_{t \in \varphi^{-1}(s)} f_t.$$
\end{lem}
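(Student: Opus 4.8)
The plan is to show that the differential $d$ carries the canonical decomposition of $u$ to a \emph{refinement} of the canonical decomposition of $f$, so that the atoms of $f$ are partitioned according to which atom of $u$ they descend from. First, by linearity $f=d(u)=\sum_{s\in S}d(u_s)$, and \Cref{lem: differentials atomic element linearly independent} applied to the canonical partition of $u$ shows that the subsets $L(V(u_s))\cup R(V(u_s))$, $s\in S$, are pairwise disjoint. Since $V(d(u_s))\subseteq L(V(u_s))\cup R(V(u_s))$, no cancellation occurs between distinct summands, so $V(f)=\bigsqcup_{s\in S}V(d(u_s))$. Each $u_s$ is levelled, hence so is $d(u_s)$ (the differential preserves levelledness); I would write the canonical decomposition $d(u_s)=\sum_{k\in K_s}g_{s,k}$ of each $d(u_s)$ into atoms, with $V(g_{s,k})\subseteq V(d(u_s))$, and assemble $f=\sum_{s\in S}\sum_{k\in K_s}g_{s,k}$, a decomposition of $f$ into connected, levelled elements with pairwise distinct supports.

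The crucial step is to check that the $g_{s,k}$ have pairwise disjoint associated subquivers $G^\star(g_{s,k})$; granting this, the characterisation of the canonical decomposition as the \emph{unique} decomposition into connected, levelled elements with pairwise disjoint $G^\star$'s (recalled above) forces $\{g_{s,k}\}_{s,k}$ to be exactly the canonical decomposition $\{f_t\}_{t\in T}$ of $f$. For a fixed $s$ the disjointness is part of the canonical decomposition of $d(u_s)$; for $s\ne s'$ it suffices to prove $G^\star(d(u_s))\cap G^\star(d(u_{s'}))=\emptyset$, since $G^\star(g_{s,k})$ is a full subquiver of $G^\star(d(u_s))$. Put $A_s=\bigl(L(V(u_s))\cup R(V(u_s))\bigr)\setminus\{0\}$ and $B_s=L(R(V(u_s)))\setminus\{0\}$. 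Using observation~(4) — $L^2=R^2$ being the constant map to $\{0\}$ and $L\circ R=R\circ L$ — one sees that $V(d(u_s))\subseteq A_s$ while the nonzero children of $V(d(u_s))$ lie in $B_s$, so every vertex of $G^\star(d(u_s))$ lies in $A_s\cup B_s$. Hence a hypothetical common nonzero vertex $w$ would lie in $A_s\cap A_{s'}$, in $B_s\cap B_{s'}$, or, after possibly swapping $s$ and $s'$, in $A_s\cap B_{s'}$.

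It remains to exclude these three. The first is empty by \Cref{lem: differentials atomic element linearly independent}. For the others I would use observation~(1) — arrows of $\overline{G}$ are determined by their target and label, so a nonzero vertex has at most one $L$-parent and at most one $R$-parent — together with $L\circ R=R\circ L$ and the fact that, $u=\sum_s u_s$ being a canonical decomposition, $G^\star(u_s)\cap G^\star(u_{s'})=\emptyset$, which in particular means no nonzero child of a vertex of $V(u_{s'})$ lies in $V(u_s)$. If $w\in B_s\cap B_{s'}$, write $w=L(R(v))=L(R(v'))$ with $v\in V(u_s)$, $v'\in V(u_{s'})$; uniqueness of the $L$-parent of $w$ gives $R(v)=R(v')$, and then uniqueness of the $R$-parent gives $v=v'$, contradicting $V(u_s)\cap V(u_{s'})=\emptyset$. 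If $w\in A_s\cap B_{s'}$, write $w=L(R(v'))$ with $v'\in V(u_{s'})$ and $w\in\{L(v),R(v)\}$ with $v\in V(u_s)$; in the first case uniqueness of the $L$-parent gives $v=R(v')$, and in the second $w=L(R(v'))=R(L(v'))$ and uniqueness of the $R$-parent gives $v=L(v')$ — in either case $v$ is a nonzero child of $v'\in V(u_{s'})$ that lies in $V(u_s)$, a contradiction. This establishes the disjointness, hence $\{g_{s,k}\}=\{f_t\}$, and I would finish by defining $\varphi(t)$ to be the unique $s$ with $f_t=g_{s,k}$ for some $k$; then $\sum_{t\in\varphi^{-1}(s)}f_t=\sum_{k\in K_s}g_{s,k}=d(u_s)$ for every $s$ (including $d(u_s)=0$, where both sides are empty), and $\varphi$ is forced because $V(f_t)$ is contained in exactly one $V(d(u_s))$. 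The degenerate cases $f=0$ or $u=0$ are immediate, using \Cref{cor: differential determined by differential of atoms} for the former. I expect the main obstacle to be the disjointness $G^\star(d(u_s))\cap G^\star(d(u_{s'}))=\emptyset$, and within it the mixed overlap $A_s\cap B_{s'}$: this is precisely where one needs both the commutation $L\circ R=R\circ L$ and the strong form of disjointness for the canonical pieces of $u$ (children of one piece avoiding another piece's support); the rest is bookkeeping.
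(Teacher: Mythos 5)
Your proof is correct and follows essentially the same route as the paper's: both rest on the disjointness (up to $\{0\}$) of the sets $L\circ R(V(u_s))$ for distinct $s$ (obtained from \Cref{lem: differentials atomic element linearly independent} together with the uniqueness of parents), the identity $L\circ R=R\circ L$, and the characterisation of the canonical decomposition via connected, levelled pieces with pairwise disjoint $G^{\star}$'s, concluding that the atoms of the $d(u_s)$ are exactly the atoms of $f$. The paper compresses the disjointness argument into a single sentence; your explicit three-case analysis, in particular the mixed overlap $A_s\cap B_{s'}$, spells out what the paper leaves implicit.
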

\begin{proof}
	It follows from \Cref{lem: differentials atomic element linearly independent} that $L \circ R(V(u_s)) \cap L \circ R(V(u_{s'})) \subseteq \{0\}$ for all $s \neq s' \in S$. Since $R \circ L(U)=L \circ R(U)$ for any $U \subseteq V(\overline{G})$ and the equivalent characterisation of the canonical decomposition, it then follows that each connected component of the levelled element $d(u_s)$ is in fact an atom of $f$. The function $\varphi$ is then simply defined such that $t \in \varphi^{-1}(s)$ whenever $f_t$ appears as an atom of $d(u_s)$.
\end{proof}

\begin{cor}\label{cor: relations in cohomology between atomic cocycles}
	Let $f_1, \dots, f_n \in C_{\cP}(A)$ be atomic cocycles such that $f=\sum_{i=1}^nf_i$ is a coboundary and such that each $f_i$ is an atom of $f$. Then, there exists a partition $\{0, \dots, n\}=\bigsqcup_{j \in J} S_j$ such that for each $j \in J$, $\sum_{s \in S_j} f_s=d(u)$ for some atomic $u$.
\end{cor}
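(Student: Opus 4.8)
The plan is to read the statement off \Cref{cor: behaviour of canonical decompositions under differential} with essentially no extra work. First I would check that $f=\sum_{i=1}^n f_i$ is already the canonical decomposition of $f$: by hypothesis each $f_i$ is an atomic cocycle, hence levelled and connected, and each is an atom of $f$, so by the uniqueness of the canonical decomposition noted right after \Cref{lem: differentials atomic element linearly independent} the atoms of $f$ are precisely $f_1,\dots,f_n$. Since $f$ is a coboundary I would fix some $u\in C_{\cP}(A)$ with $f=d(u)$ and write $u=\sum_{s\in S}u_s$ for its canonical decomposition, so that each $u_s$ is connected and levelled, i.e.\ atomic.

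Next I would apply \Cref{cor: behaviour of canonical decompositions under differential} to the equality $f=d(u)$ together with these two canonical decompositions. It yields a function $\varphi\colon\{1,\dots,n\}\to S$ with $d(u_s)=\sum_{i\in\varphi^{-1}(s)}f_i$ for every $s\in S$. I would then set $J\coloneqq\varphi(\{1,\dots,n\})$ and $S_j\coloneqq\varphi^{-1}(j)$ for $j\in J$. By construction $\{1,\dots,n\}=\bigsqcup_{j\in J}S_j$ is a partition into non-empty blocks, and for each $j\in J$ one has $\sum_{s\in S_j}f_s=d(u_j)$ with $u_j$ atomic, which is exactly the assertion. (Since the $f_i$ are distinct atoms, hence linearly independent, each $d(u_j)$ with $j\in J$ is non-zero, so the $u_j$ appearing here is a genuine atom of $u$; this is not strictly needed.)

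There is no real obstacle here: the only point that needs a line of justification is the identification of $f=\sum f_i$ with its canonical decomposition so that \Cref{cor: behaviour of canonical decompositions under differential} is applicable, after which the argument is pure bookkeeping about the fibres of $\varphi$. The substantive input — that canonical decompositions are compatible with $d$, which rests on the identities $L^2=R^2$ being the constant map to $\{0\}$, on $R\circ L=L\circ R$, and on the disjointness of children in \Cref{lem: differentials atomic element linearly independent} — has already been carried out in the lemmas preceding the statement. I would also remark that the atoms $u_j$ need not themselves be cocycles, which is harmless since the conclusion constrains only their differentials.
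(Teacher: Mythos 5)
Your proposal is correct and is exactly the intended deduction: the paper states this as an immediate corollary of \Cref{cor: behaviour of canonical decompositions under differential}, and your argument — identifying $\sum f_i$ with the canonical decomposition of $f$, applying that lemma to $f=d(u)$ with $u$ canonically decomposed, and taking the non-empty fibres of $\varphi$ as the blocks $S_j$ — is precisely this. The only point requiring a word of justification, that the $f_i$ are the atoms of $f$, is already part of the hypothesis, as you note.
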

\noindent This will be useful to study the relations between cocycles of $C_{\cP}(A)$ inside Hochschild cohomology.

\subsubsection{Classification of atomic cocycles} \ \medskip

\noindent \Cref{cor: differential determined by differential of atoms} allows us to reduce the analysis of cocycles in $C_{\cP}(A)$ to the atomic ones. A basis of the Hochschild cohomology of a graded gentle algebra then appears naturally by classifying atomic cocycles and relations in cohomology among them. The proofs are spread across a series of lemmas.

\begin{lem}\label{lem: atomic coycles are of type A or tilde A}Let $f \in C_{\cP}(A)$ be atomic. If $V(f) \not \subseteq G$, then $V(f)=\{(x,x)\}$ for some $x \in Q_0$. If $V(f) \subseteq G$, then $G^{\star}(f)$ is a finite quiver of type $A$ or type $\tilde{A}$. If it is of type $\tilde{A}$, then $G(f)=G^{\star}(f)$ and if it is of type $A$, then $G(f)$ is a quiver of type $\tilde{A}$ with one additional vertex $\{0\}=G(f) \setminus G^{\star}(f)$. In both cases $G(f)$ has $2|V(f)|$ arrows whose labels alternate between $L$ and $R$ in the natural cyclic order. 
\end{lem}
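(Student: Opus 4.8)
Throughout I read $f$ as an \emph{atomic cocycle} (as in the section heading: the cocycle equation $d(f)=0$ is the one substantial input below, and I indicate precisely where it is used). The plan is to read everything off the finite quiver $\overline{G}(f)=G(f)$ together with the elementary properties (1)--(5) and the equation $d(f)=0$.

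\emph{The case $V(f)\not\subseteq G$.} Some vertex of $V(f)$ lies in $\overline{G}\setminus G$, hence equals $(x,x)$ for some $x\in Q_0$ and sits at level $(0,0)$; since $f$ is levelled, every element of $V(f)$ then lies at level $(0,0)$ and is of the form $(y,y)$. As such vertices do not lie in $G^{\star}$, the canonical decomposition treats each summand $(y,y)$ of $f$ as its own atom, so atomicity of $f$ forces $V(f)=\{(x,x)\}$.

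\emph{The case $V(f)\subseteq G$: the graph and its degrees.} Let $(n,n')$ be the common level of the vertices of $V(f)$. By construction $\overline{G}(f)=G(f)$ has vertex set $V(f)\sqcup C'$, where $C'=L(V(f))\cup R(V(f))$ (possibly containing the formal vertex $0$), and its arrows are precisely the two out-arrows of each $v\in V(f)$, one labelled $L$ with target $L(v)$ and one labelled $R$ with target $R(v)$; by (5) and the exclusion of loops these two targets are distinct, so $G(f)$ has exactly $2|V(f)|$ arrows, all with source in $V(f)$, and deleting $0$ gives $G^{\star}(f)$. In $G^{\star}(f)$ every $v\in V(f)$ has in-degree $0$ and out-degree $2$, or $1$ when one of $L(v),R(v)$ equals $0$; by (3) every $u\in C:=C'\setminus\{0\}$ has out-degree $0$ and in-degree at most $2$. \emph{Here the cocycle condition enters}: writing $d(v)=c_L(v)\,L(v)+c_R(v)\,R(v)$ with $c_L(v),c_R(v)\in\Bbbk^{\times}$ whenever the relevant child is non-zero, and using the injectivity of $L$ and $R$ on $G_0$, the coefficient of a fixed non-zero $u$ in $d(f)=\sum_{v\in V(f)}\lambda_f(v)\,d(v)$ is the sum of at most one $L$-contribution and at most one $R$-contribution; since the $(p,q)$ form a basis, $d(f)=0$ forces every $u\in C$ to receive \emph{both} a non-zero $L$-contribution and a non-zero $R$-contribution, i.e.\ $u$ has exactly two parents and both lie in $V(f)$. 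Hence in $G^{\star}(f)$ every $u\in C$ has degree exactly $2$, so the only degree-$1$ vertices are the $v\in V(f)$ admitting a $0$-child; write $z$ for their number.

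\emph{Conclusion.} Since $f$ is an atom, $G^{\star}(f)$ is connected, has maximal degree $2$, and has exactly $z$ vertices of degree $1$; a connected graph of maximal degree $2$ is a path or a cycle, so $z\in\{0,2\}$. If $z=0$ then $G^{\star}(f)$ is a cycle (type $\tilde{A}$), no $v$ has a $0$-child, so $0\notin\overline{G}(f)$ and $G(f)=G^{\star}(f)$. If $z=2$ then $G^{\star}(f)$ is a path (type $A$) whose two endpoints are the two $v\in V(f)$ with a $0$-child; then $0\in\overline{G}(f)$ with in-degree exactly $z=2$, so $G(f)$ is obtained from the path $G^{\star}(f)$ by joining its endpoints through the single new vertex $0$, hence $G(f)$ is a cycle (type $\tilde{A}$) with $\{0\}=G(f)\setminus G^{\star}(f)$. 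In both cases $G(f)$ is a cycle with $2|V(f)|$ arrows, and it is bipartite (between the two consecutive levels, with $0$ on the level-$(n+1,n'+1)$ side), hence of even length; at each $v\in V(f)$ its two incident arrows carry labels $L$ and $R$, and at each $u\in C$ the two incident arrows carry distinct labels by (3), so the unique $2$-colouring of the edges of the even cycle $G(f)$ is exactly an alternation of $L,R$ around it (this also pins down the two arrows into $0$). The main point needing care is the cocycle step — that every vertex of $C$ has two parents in $V(f)$ — since it is this (and not the structural claim that $G^{\star}(f)$ is a path or cycle, which holds for any atomic element) that fails for non-cocycle atoms; the only genuinely gentle-algebra-specific inputs are (1), (3), (5), supplying the degree bound $\le 2$ and the injectivity of $L$ and $R$.
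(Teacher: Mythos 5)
Your proof is correct and follows the same basic strategy as the paper: analyse the bipartite graph $G(f)$ spanning the two consecutive levels, use the degree bounds coming from observations (1)--(5), and invoke connectedness of the atom to conclude that the underlying graph is a path or a cycle. The one substantive difference is the step you flag yourself: you use $d(f)=0$ to force every non-zero child to have \emph{both} of its parents in $V(f)$, whereas the paper's proof deduces the dichotomy from connectedness and the degree bounds alone. You are right that this extra input is genuinely needed and that the lemma should be read for atomic \emph{cocycles} (as the section heading and all later applications indicate): for a mere atomic element such as $f=(\beta,\beta)$ with $\beta$ a middle arrow of a relation-free linear quiver, $G^{\star}(f)$ is a path on three vertices with $0\notin G(f)$, so the assertion that a type-$A$ quiver $G^{\star}(f)$ closes up into a type-$\tilde A$ quiver $G(f)$ through the vertex $0$ fails. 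So your write-up supplies a step the paper's own proof leaves implicit. One small slip in your bookkeeping: the dichotomy ``$z\in\{0,2\}$ with $z=0$ exactly in the cycle case'' misses the degenerate single-vertex path, i.e.\ $V(f)=\{v\}$ with $L(v)=R(v)=0$. This case is not exotic --- it covers precisely the rank-one atoms $(u,\para{u})$, $(s(u),u)$ and $(\alpha,\alpha)$ that matter later --- and there $G^{\star}(f)$ is a single point with no degree-one vertices, yet it belongs to the type-$A$ branch, with $G(f)$ the two-arrow cycle through $0$. The conclusion of the lemma still holds there, so this is a cosmetic repair rather than a gap.
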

\begin{proof}
	If $V(f)$ does not lie in $G$, it must contain an element of the form $(x,x)$, $x \in Q_0$, and since $f$ is levelled, $V(f)=V_{0,0}(f) \subseteq \{(y,y) \mid y \in Q_0\}$. In what follows we assume $V(f) \subseteq G$. The finiteness of $G^{\star}(f)$ is clear. Since $f$ is connected and levelled, say $V=V_{n, n'}(f)$, it follows that $G^{\star}(f)$ is connected and $L(V(f)) \cup R(V(f)) \subseteq \{0\} \sqcup h^{-1}(n+1) \cap h^{\prime -1}(n'+1)$.  By definition, each arrow in $G(f)$ connects a source in $V(f)$ to a target in $\{0\} \sqcup h^{-1}(n+1) \cap h^{\prime -1}(n'+1)$ so that all vertices of  $G^{\star}(f)$ lie in $V_{n,n'}(f) \sqcup V_{n+1, n'+1}(f)$. As we remarked earlier, each  $v \in G^{\star}(f)$ is the target of at most one arrow in $G^{\star}(f)$  for each of the two labels $L$ and $R$ and each such arrow has source in $V(f)$. Moreover, each $w \in V(f)$ is the source of exactly two arrows in $G(f)$, again one for each label. Set $r \coloneqq |V(f)|$. Because $f$ is connected and arrows with non-zero target are determined by their target and label, we now conclude that either $0 \in G(f)$ and then $G^{\star}(f)$ is a quiver of type $A$ with $2(r-1)$ arrows, or, $G(f) \subseteq G^{\star}$ and then $G(f)$ of type $\tilde{A}$ with $2r$ arrows. The remaining claims are a direct consequence of this.
\end{proof}
\noindent The \textbf{type} of an atomic cocycle $f$ with $V(f) \subseteq G$ is the type of the quiver $G^{\star}(f)$. As we saw in the proof of the previous lemma, the two possible cases are distinguished by whether $0 \in G(f)$ or not. Moreover, by \Cref{lem: atomic coycles are of type A or tilde A} one obtains a canonical linear order (type $A$) or cyclic order (type $\tilde{A}$) on $V(f)$ by requiring that the successor $w \in V(f)$ of an element $v \in V(f)$ satisfies $R(v)=L(w)$. 
\begin{lem}\label{lem: atomic cocycle up to sign}
	Let $f$ be an atomic cocycle with $V(f) \subseteq G_0$. Then for all $v, v' \in V(f)$, $$\lambda_f(v)=\sigma(v,v')\lambda_f(v')$$ for some $\sigma(v,v') \in \{\pm 1\}$ which only depends on $v, v' \in G$.
\end{lem}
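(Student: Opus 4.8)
The idea is to propagate the cocycle condition $d(f)=0$ around the quiver $G^{\star}(f)$, which by \Cref{lem: atomic coycles are of type A or tilde A} has the shape of a type $A$ or type $\tilde{A}$ quiver whose $2|V(f)|$ arrows alternate between labels $L$ and $R$ in the cyclic (resp.\ linear) order. I would first observe that it suffices to prove the claim for two elements $v, w \in V(f)$ which are \emph{adjacent} in this order, since the general relation then follows by composing: $\sigma(v,v') := \sigma(v,v_1)\sigma(v_1,v_2)\cdots\sigma(v_k,v')$ along the chosen path, and one checks this is independent of the path (automatic in type $A$; in type $\tilde{A}$ one must check that going all the way around the cycle gives $+1$, which is exactly what the cocycle condition forces — see below). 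So the core step is: if $v, w \in V(f)$ satisfy $R(v)=L(w)=:u$ with $u \neq 0$, then $\lambda_f(v) = \pm \lambda_f(w)$ with the sign depending only on $v$ and $w$.

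**The key local computation.** Fix such $v=(p,q)$ and $w=(p',q')$ with common child $u \in R(v)\cap L(w)$, $u\neq 0$. By \Cref{cor: differential determined by differential of atoms} we have $d(f)=0$, and by \Cref{lem: differentials atomic element linearly independent} the only contributions to the coefficient of $u$ in $d(f)$ come from vertices in $V(f)$ having $u$ as a child. Since $f$ is atomic hence connected, and by observation (5) in the list preceding the lemma the set $\{L(v),R(v)\}$ is linearly independent or contains $0$ (so $u$ is not simultaneously in $L(v)$ and $R(v)$), and similarly for $w$; moreover observation (3) says each vertex of $\overline{G}^{\star}$ is the target of at most two arrows with distinct labels. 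Hence $u$ has exactly the two parents $v$ (via $R$) and $w$ (via $L$) inside $V(f)$. Reading off the coefficient of $u$ in $d(f) = \sum_x \lambda_f(x)\, d(p_x,q_x)$ and using the explicit formula \eqref{eq: Bardzell differential} for $d_L$ and $d_R$ — which contributes $u$ with coefficient $\pm(-1)^{s}$ (from $d_R(v)$, where $s$ is the internal degree of $v$) and $\pm(-1)^{s'}(-1)^{s'|\alpha|}$ (from $d_L(w)$) — the vanishing of this coefficient yields a relation of the form $\lambda_f(v)\cdot \varepsilon_1 + \lambda_f(w)\cdot \varepsilon_2 = 0$ with $\varepsilon_1, \varepsilon_2 \in \{\pm 1\}$ determined purely by the internal degrees and the connecting arrow of $v$ and $w$, i.e.\ purely by the combinatorial data of $v$ and $w$ in $G$. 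Setting $\sigma(v,w) = -\varepsilon_1^{-1}\varepsilon_2 = -\varepsilon_1\varepsilon_2$ gives $\lambda_f(v)=\sigma(v,w)\lambda_f(w)$, and these local signs are manifestly independent of $f$ itself.

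**Assembling the global statement and the type $\tilde{A}$ consistency.** In type $A$ the quiver $G^{\star}(f)$ is a tree, so there is a unique path between any two vertices and the composite sign $\sigma(v,v')$ is unambiguous. In type $\tilde{A}$, $G^{\star}(f) = G(f)$ is a cycle, and going once around produces a product of $2|V(f)|$ local signs; I would argue this product is $+1$ precisely because $f$ is a well-defined (consistent, non-zero) cocycle — if the product were $-1$, chaining the relations around the cycle would force $\lambda_f(v) = -\lambda_f(v)$, i.e.\ $\lambda_f(v)=0$, contradicting $v \in V(f)$. Thus consistency is automatic and $\sigma(v,v')$ is well-defined, depending only on the positions of $v,v'$ in $G$.

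**Main obstacle.** The only genuinely delicate point is bookkeeping the signs in \eqref{eq: Bardzell differential}: one must verify that the signs $(-1)^s$, $(-1)^{s|\alpha|}$, $(-1)^n$ appearing in $d_L$ and $d_R$, when matched at the common child $u$, depend only on the internal/cohomological degrees of the \emph{parents} $v,w$ and the connecting arrow — not on any global choice — so that $\sigma(v,w)$ is intrinsic. This is a routine but careful check using that adjacent vertices in $V(f)$ differ by one arrow in the $p$-component and that the internal degree changes in a controlled way; I do not foresee a conceptual difficulty, only the need to be meticulous with the Koszul conventions fixed in \Cref{sec: Bigraded Hochschild cohomology}.
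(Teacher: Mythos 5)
Your proposal is correct and follows essentially the same route as the paper: the paper also extracts the local relation $d_L(\lambda_f(w)w)=-d_R(\lambda_f(v)v)$ for a vertex $v$ and its successor $w$ from $d(f)=0$ (using that $\{L(v),R(v)\}$ is linearly independent or contains $0$), and then obtains $\sigma(v,v')$ by induction along the chain, reading the signs off from \eqref{eq: Bardzell differential}. Your additional remarks on the type $\tilde{A}$ consistency and the uniqueness of parents are correct elaborations of points the paper leaves implicit.
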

\begin{proof}
	Let $w$ be the successor of a vertex $v \in V(f)$ and hence $L(w)=R(v) \neq 0$. Because $v, w \in G_0$, $L(v)=0$ (resp.~$R(w)=0$) or $\{L(v), R(v)\}$ (resp.~$\{L(w), R(w)\}$) is linearly independent. Thus, $d(f)=0$ implies that $d_L( \lambda_f(w) w)=-d_R(\lambda_f(v) v)$.
	The result then follows by induction and explicit formulas for $\sigma(v,v')$ can be deduced from the signs appearing in $d_L$ and $d_R$ in \eqref{eq: Bardzell differential}.
\end{proof}
\begin{cor}\label{cor: atomic cocycle determined by its graph}
	Let $f$ be an atomic cocycle with $V(f) \subseteq G_0$.	Then, up to scalar multiple, $f$ is uniquely determined by $G^{\star}(f)$ (resp.~$G(f)$). 
\end{cor}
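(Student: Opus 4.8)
The plan is to read off from $G^{\star}(f)$ both the support of $f$ and, up to a single overall scalar, its coefficients, by combining the two preceding lemmas. Recall that by construction $f$ is the same datum as the pair consisting of its support $V(f) \subseteq \overline{G}^{\star}$ and the coefficient function $\lambda_f \colon V(f) \to \Bbbk^{\times}$, since $f = \sum_{v \in V(f)} \lambda_f(v)\, v$ once each $v = (p_v,q_v) \in V(f)$ is identified with the corresponding basis vector of $C_{\cP}(A)$. So it suffices to recover $V(f)$ and $\lambda_f$ (the latter only up to scaling) from $G^{\star}(f)$.

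For the support I would invoke \Cref{lem: atomic coycles are of type A or tilde A}: since $V(f) \subseteq G_0$ and $f$ is atomic, all vertices of $G^{\star}(f)$ lie in two consecutive levels, the lower one being $V(f) = V_{n,n'}(f)$ and the upper one the child set $\big(L(V(f)) \cup R(V(f))\big)\setminus\{0\} \subseteq V_{n+1,n'+1}(f)$. Since the level of a vertex $(p,q)$ is read off from the height $h(p,q)=l(p)$ (together with $l(q)$), and $V(f)$ is the lower of the two levels, $V(f)$ is recovered from $G^{\star}(f)$ as its set of vertices of minimal height. Passing from $G^{\star}(f)$ to $G(f)$ changes nothing here, since the two differ at most by the vertex $\{0\}$ together with the arrows into it, and hence have the same non-zero vertices; this settles the parenthetical version of the statement.

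For the coefficients, fix any reference vertex $v_0 \in V(f)$ and set $\lambda \coloneqq \lambda_f(v_0) \in \Bbbk^{\times}$. By \Cref{lem: atomic cocycle up to sign} one has $\lambda_f(v) = \sigma(v,v_0)\,\lambda$ for every $v \in V(f)$, with $\sigma(v,v_0) \in \{\pm 1\}$ the (well-defined) product of the local signs occurring in $d_L$ and $d_R$ in \eqref{eq: Bardzell differential} along a chain in $G^{\star}(f)$ joining $v_0$ to $v$; each such local sign depends only on the bidegrees of the two vertices it relates and on the degree of the arrow recording their common child, all of which are encoded in $G^{\star}(f)$. Therefore $f = \lambda \sum_{v \in V(f)} \sigma(v,v_0)\, v$ is completely pinned down by $G^{\star}(f)$ except for the scalar $\lambda$, which proves the corollary (and likewise for $G(f)$ by the previous paragraph).

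I do not expect any genuine obstacle: the statement is essentially a repackaging of the two lemmas. The two points deserving a moment's care are (i) that $V(f)$ really is an honest level set sitting strictly below all remaining vertices of $G^{\star}(f)$, so that it can be singled out by the height function --- this is precisely the level bookkeeping carried out in the proof of \Cref{lem: atomic coycles are of type A or tilde A}; and (ii) that the signs $\sigma(v,v_0)$ are intrinsic to the labelled graph $G^{\star}(f)$ and do not secretly depend on $f$, including their independence of the chosen chain around the cycle in the type $\tilde A$ case --- which is exactly what \Cref{lem: atomic cocycle up to sign} (and its proof via $d(f)=0$) delivers.
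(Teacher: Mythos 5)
Your argument is correct and matches the paper's intent: the corollary is stated there without a separate proof, as an immediate consequence of \Cref{lem: atomic cocycle up to sign} (whose statement already records that the signs $\sigma(v,v')$ depend only on the vertices $v,v'$ and not on $f$) together with the level structure from \Cref{lem: atomic coycles are of type A or tilde A}. You have simply made explicit the two points the paper leaves implicit — that $V(f)$ is recoverable as the minimal-height vertices of $G^{\star}(f)$, and that the relative signs are intrinsic to the labelled graph — so there is nothing to add.
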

\begin{lem}\label{cor: trace elements type tilde A}
	Let $U \subseteq G^{\star}$ be a subquiver of type $\tilde{A}$ such that $U \subseteq h^{-1}(\{n,n+1\})$ for some $n \geq 0$. If $U=G^{\star}(f)$ for some atomic cocycle $f$, then exactly one of the following is true:
	
	\begin{enumerate}
		\item There exists a complete anti-path $p=\alpha_1 \cdots \alpha_n \in \cP_n$ of period $r$ such that $f$ is a non-zero scalar multiple of
		\begin{displaymath}
			\cN^0(p) \coloneqq \sum_{i=0}^{r-1} (-1)^{i n + |p| |\alpha_1 \cdots \alpha_i|} \big(\rot^i(p), s(\rot^i(p))\big)
		\end{displaymath}
		
		\item  There exists a  complete path $q=\beta_1 \cdots \beta_n$, $l(q)=n$, of period $r$ such that $f$ is a non-zero scalar multiple of
		\begin{displaymath}
			\cN^0(q) \coloneqq \sum_{i=0}^{r-1} (-1)^{ |q| |\beta_1 \cdots \beta_i|} \big(s(\rot^i(q)), \rot^i(q)\big).
		\end{displaymath}
	\end{enumerate}
\end{lem}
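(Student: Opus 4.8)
The plan is to combine \Cref{lem: atomic coycles are of type A or tilde A} with \Cref{cor: atomic cocycle determined by its graph}: an atomic cocycle $f$ with $G^{\star}(f)=U$ of type $\tilde{A}$ is pinned down, up to one global scalar, by $V(f)$ together with the cyclic order on $V(f)$ in which the successor $w$ of a vertex $v$ is characterised by $R(v)=L(w)$. Write $V(f)=\{v_0,\dots,v_{r-1}\}$ in this cyclic order, $v_i=(p_i,q_i)$; as $f$ is levelled all $v_i$ lie in a single bidegree $(n,n')$, and as $U$ is of type $\tilde{A}$ we have $0\notin G(f)$, hence $R(v_i)=L(v_{i+1})\neq 0$ for every $i\in\mathbb{Z}/r$. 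Unwinding \eqref{eq: Bardzell differential}, there are unique arrows $\beta_i$ and $\alpha_{i+1}$ with $R(v_i)=\pm(p_i\beta_i,\,q_i\beta_i)$ and $L(v_{i+1})=\pm(\alpha_{i+1}p_{i+1},\,\alpha_{i+1}q_{i+1})$, and equality of these two vertices forces $p_i\beta_i=\alpha_{i+1}p_{i+1}$ and $q_i\beta_i=\alpha_{i+1}q_{i+1}$. Comparing lengths, $p_{i+1}$ (resp.\ $q_{i+1}$) is obtained from $p_i$ (resp.\ $q_i$) by deleting the first arrow and appending one on the right, so $(p_i)_i$ and $(q_i)_i$ are two windows of widths $n$ and $n'$ sliding along a common cyclic arrow sequence.

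The heart of the proof is the dichotomy $n=0$ or $n'=0$. Suppose instead $n\geq 1$ and $n'\geq 1$. Reading off leftmost arrows from $p_i\beta_i=\alpha_{i+1}p_{i+1}$ and $q_i\beta_i=\alpha_{i+1}q_{i+1}$ shows that the first arrow of $p_i$ and the first arrow of $q_i$ both equal $\alpha_{i+1}$, hence coincide. But then no arrow $\alpha$ can satisfy both $\alpha p_i\in\cP_{n+1}$ and $\alpha q_i\notin I$, since these impose contradictory conditions (being / not being a relation) on the length-$2$ path $\alpha$ times this common first arrow. Thus $d_L(v_i)=0$, so the $L$-arrow out of $v_i$ has target $0$, giving $0\in G(f)$ and contradicting that $U$ is of type $\tilde{A}$. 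Conversely $n=n'=0$ cannot occur for type $\tilde{A}$, as then $v_i=(e_{x_i},e_{x_i})\notin G$ whereas $V(f)\subseteq G$. So exactly one of $n=0$, $n'=0$ holds.

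Suppose $n'=0$; the case $n=0$ is its mirror image, exchanging $p$ and $q$. Then $q_i=e_{s(p_i)}$, and the second components of $R(v_i)=L(v_{i+1})$ give $\beta_i=\alpha_{i+1}$ while the first components identify $\alpha_{i+1}$ with the first arrow of $p_i$; substituting yields $p_{i+1}=\rot(p_i)$. Hence $p_i=\rot^i(p_0)$, and $p\coloneqq p_0$ is a complete anti-path: it closes up because $v_0$ is a parallel pair with trivial second component, and the wrap-around condition for completeness is exactly the requirement $p_0\beta_0=p_0\alpha_1\in\cP_{n+1}$, i.e.\ $R(v_0)\neq 0$. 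Its period equals $r$ since the $v_i$, hence the $\rot^i(p_0)$, are distinct while $\rot^r(p_0)=p_r=p_0$. Therefore $V(f)=\{(\rot^i(p),s(\rot^i(p)))\mid 0\leq i<r\}=V(\cN^0(p))$. By \Cref{lem: atomic cocycle up to sign} the coefficients of $f$ on this vertex set are determined, up to one overall scalar, by the sign rule $\sigma(-,-)$ read off from $d_L$ and $d_R$ in \eqref{eq: Bardzell differential}; a direct check that the exponents $in+|p|\,|\alpha_1\cdots\alpha_i|$ appearing in $\cN^0(p)$ reproduce this rule then shows that $f$ is a non-zero scalar multiple of $\cN^0(p)$. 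In the mirror case one obtains $f$ proportional to $\cN^0(q)$ for a complete path $q$, with the analogous sign check on the exponents $|q|\,|\beta_1\cdots\beta_i|$. The two alternatives are mutually exclusive because the corresponding vertex sets are supported on parallel pairs with trivial second, resp.\ first, component, which overlap only in the vertices $(e_x,e_x)$ excluded from $G$.

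The step I expect to be most delicate is the dichotomy: making the passage from $R(v_i)=L(v_{i+1})$ to the path identities and the ``common first arrow'' argument watertight in the small-length corner cases ($n$ or $n'$ equal to $1$, possible loops, and the loop/Kronecker quivers excluded by the standing convention), where one must confirm that the relevant $d_L$ (or $d_R$) genuinely vanishes. The rest, chiefly matching the global sign and the exponents in $\cN^0(p)$ and $\cN^0(q)$ against \Cref{lem: atomic cocycle up to sign}, is routine bookkeeping.
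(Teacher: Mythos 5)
Your proof is correct and takes essentially the same route as the paper: walk around the type $\tilde{A}$ cycle via $R(v_i)=L(v_{i+1})\neq 0$, deduce that exactly one of the two components of each vertex is trivial while the other is a complete (anti-)path whose rotations sweep out $V(f)$, and then fix the coefficients up to a global scalar by \Cref{lem: atomic cocycle up to sign}. The only real difference is in the dichotomy sub-step, where the paper follows the successors around the whole cycle to derive the commutation relation $pq'=q'p$ and conclude that the second component is trivial, whereas you argue locally that coinciding first arrows of $p_i$ and $q_i$ force $d_L(v_i)=0$ and hence $0\in G(f)$ --- a slightly more direct path to the same conclusion.
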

\begin{proof}First we assume that $h(f)\geq 1$ and let $w \in V(f)$ be the successor of an element $v=(p,q) \in V(f)$ with $p=\alpha_1 \cdots \alpha_n \in \cP_n$. By the formulas for $d_L$ and $d_R$ we infer that $w=(\alpha_2 \cdots \alpha_n \beta, q' \beta)$, where $\beta \in Q_1$ is an arrow such that $q=\alpha_1 q'$, $\alpha_n \beta \in I$ and $q' \beta \not \in I$. By applying this formula repeatedly and using the assumption that $(Q,I)$ is gentle, the fact that $G^{\star}(f)$ is a cycle forces $\beta=\alpha_1$. By induction we therefore find that $p$ is complete. We claim that $l(q')=0$ and hence $l(q)=1$. Indeed, following the sequence of successors we find that $pq'=q'p$ in $A$ which implies $l(q')=0$ since $l(p)=h(f) > 0$ since $I$ is generated only by zero relations. We conclude that $V(f)=\{(\rot^i(p), s(\rot^i(p)) \mid 0 \leq i \leq r-1\}$ where $r$ denotes the period of $p$. The concrete signs of $\cN^0(p)$ then follows by tracking the explicit signs in \Cref{lem: atomic cocycle up to sign}. The other case when $h(f)=l(p)=0$ and hence $l(q) > 0$ is treated with similar arguments and shows that $q$ is a complete path.
\end{proof}

\begin{cor}\label{cor: 0 trace not coboundary}
	Let $f$ be an atomic cocycle with $V(f) \subseteq G_0$ of type $\tilde{A}$. Then $f$ is not a coboundary.
\end{cor}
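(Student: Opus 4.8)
The plan is to exploit the two height functions $h, h'$ on the vertices of the Bardzell quiver, recorded by $h((u,v)) = l(u)$ and $h'((u,v)) = l(v)$. The single decisive observation is that the differential of $C_{\cP}(A)$ raises the height $h'$ by exactly one (and, being the differential of a cochain complex, raises $h$ by exactly one too): by the explicit formula \eqref{eq: Bardzell differential}, every parallel path occurring in $d((u,v))$ has the shape $(\alpha u, \alpha v)$ or $(u\beta, v\beta)$ for arrows $\alpha,\beta \in Q_1$, hence has both $h \geq 1$ and $h' \geq 1$. Since cancellation can only delete terms, the same holds for every parallel path occurring with non-zero coefficient in $d(g)$ for an arbitrary (possibly infinite) $g = \sum_j \mu_j(u_j,v_j) \in C_{\cP}(A)$. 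Consequently, no non-zero element of $d\big(C_{\cP}(A)\big)$ is supported entirely on parallel paths with $l(u)=0$, and none is supported entirely on parallel paths with $l(v)=0$.

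To finish, I would invoke \Cref{cor: trace elements type tilde A}: being an atomic cocycle of type $\tilde{A}$ with $V(f) \subseteq G_0$, $f$ is a non-zero scalar multiple of $\cN^0(p)$ for some complete anti-path $p$, or of $\cN^0(q)$ for some complete path $q$. The assumption $V(f) \subseteq G_0$ excludes vertices $(u,v)$ with $l(u)=l(v)=0$, so $l(p) \geq 1$ (resp.\ $l(q) \geq 1$); in particular $f \neq 0$. Reading off the formulae in \Cref{cor: trace elements type tilde A}, every parallel path occurring in $\cN^0(p)$ has the form $(\rot^i(p), s(\rot^i(p)))$ and hence $h'=0$, while every parallel path occurring in $\cN^0(q)$ has the form $(s(\rot^i(q)), \rot^i(q))$ and hence $h=0$. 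Thus $f$ is a non-zero linear combination of parallel paths on which $h'$ (resp.\ $h$) vanishes identically, whereas by the previous paragraph every non-zero coboundary has all of its parallel paths satisfying $h \geq 1$ and $h' \geq 1$. Therefore $f$ is not a coboundary.

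I do not expect a genuine obstacle; only two minor points need a line of care. First, ``coboundary'' is to be understood within $C_{\cP}(A)$, which is harmless since $C_{\cP}(A)$ is a retract of the Hochschild complex via $U^{\ast}, V^{\ast}$, so the vanishing of a cohomology class is detected in either complex. Second, the argument must tolerate $g$ being an infinite sum, which it does because the height bound is checked term by term. As an alternative to the second paragraph one could argue through \Cref{cor: behaviour of canonical decompositions under differential}: if $f = d(g)$ then, $f$ being a single atom, one may replace $g$ by an atom $g_0$ with $d(g_0) = f$; since $g_0$ is levelled, say $V(g_0) = V_{m,m'}(g_0)$, its differential is levelled in bidegree $(m+1, m'+1)$, which cannot equal the bidegree $(l(p), 0)$ (resp.\ $(0, l(q))$) of $f$, as that would force $m'+1 = 0$ (resp.\ $m+1 = 0$). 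I would present the height-function version as the cleaner of the two.
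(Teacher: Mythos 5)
Your proof is correct and follows essentially the same route as the paper: both invoke \Cref{cor: trace elements type tilde A} to see that $f$ is a multiple of $\cN^0(p)$ or $\cN^0(q)$, hence supported on vertices with $h'=0$ (resp.\ $h=0$), and then observe that every term of a coboundary has both heights at least $1$ (the paper phrases this as ``none of the elements of $V(f)$ are targets of arrows in $\overline{G}$''). No issues.
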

\begin{proof}
	The explicit formulas for $\cN^0(-)$ in \Cref{cor: trace elements type tilde A} show that either $h(f)=0$ or $h'(f)=0$. Thus, none of the elements in $V(f)$ are target of an arrow in $\overline{G}$.
\end{proof}
\noindent The following lemma describes the differentials of the expressions $\cN^0(p)$ and $\cN^0(q)$ from \Cref{cor: trace elements type tilde A} which can be defined regardless of whether they are cocycles or not. 

\begin{lem}\label{lem: differentials of atomic cocycles of type tilde A}Let $p=\alpha_1 \cdots \alpha_n \in \cP_n$ and $q=\beta_1 \cdots \beta_m$ a complete path. Then
	\begin{displaymath}
		\begin{aligned}
			d(\cN^0(p)) & = (-1)^{|p|}\cdot \big(1- (-1)^{\omega(p)} \big) (\alpha_1 p, \alpha_1), \\
			d(\cN^0(q)) & =\big(1- (-1)^{\omega(q)}\big) (\beta_n, \beta_n q).
		\end{aligned}
	\end{displaymath}
	\noindent where as before $\omega(p)= l(p) - |p|= n - |p|$ and $\omega(q) = -|q|$.
\end{lem}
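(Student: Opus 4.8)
The plan is to establish both identities by applying the Bardzell cochain differential \eqref{eq: Bardzell differential} directly to each summand of $\cN^0(p)$ and $\cN^0(q)$ and then exploiting a cyclic cancellation. We treat the anti-path case; the complete-path case is entirely parallel (the two coordinates of the parallel pairs exchange roles and $\cN^0(q)$ sits in cohomological degree $0$). Write $\cN^0(p)=\sum_{i=0}^{r-1}c_i\bigl(\rot^i(p),\,s(\rot^i(p))\bigr)$ with $c_i=(-1)^{in+|p||\alpha_1\cdots\alpha_i|}$, where $r$ denotes the period of the complete anti-path $p$. Since $(Q,I)$ is gentle and $p$ is complete, for each $i$ there is precisely one arrow that prolongs $\rot^i(p)$ to an anti-path of length $n+1$ on the left and precisely one on the right — namely the arrow $\alpha_i$ and the arrow $\alpha_{i+1}$, read cyclically in $p$. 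Hence $d\bigl(\rot^i(p),s(\rot^i(p))\bigr)$ consists of exactly two terms, the $d_L$-term and the $d_R$-term.

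The combinatorial heart of the argument is that the $d_R$-term of the $i$-th summand and the $d_L$-term of the $(i+1)$-st summand are supported on the same parallel pair, because prolonging $\rot^{i+1}(p)$ on the left by $\alpha_{i+1}$ yields the same length-$(n+1)$ anti-path as prolonging $\rot^i(p)$ on the right by $\alpha_{i+1}$. Thus $d(\cN^0(p))$ is a cyclic telescoping sum. Comparing the explicit signs in $d_L$, $d_R$ and in the $c_i$ shows that the two contributions at each junction cancel: one computes $c_{i+1}c_i^{-1}=(-1)^{n+|p||\alpha_{i+1}|}$, and the elementary identity $(-1)^{n+|p||\alpha_{i+1}|}\cdot(-1)^{|p|(1+|\alpha_{i+1}|)}=(-1)^{n+|p|}=(-1)^{\omega(p)}$ (using $\omega(p)=n-|p|$) is exactly the cancellation condition. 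This disposes of every junction except the seam between summands $r-1$ and $0$, where the relevant coefficient is not $c_0=1$ but the value the defining formula for $c_i$ would take at $i=r$, namely $c_r=(-1)^{rn+|p||\alpha_1\cdots\alpha_r|}$. Using $n=re$ with $e=e(p)$ the exponent, the relation $|p|=e\,|\alpha_1\cdots\alpha_r|$, and $m^2\equiv m\pmod2$, one simplifies $c_r=(-1)^{n+|p|}=(-1)^{\omega(p)}$; this is where the mismatch — and with it the factor $1-(-1)^{\omega(p)}$ — is born.

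Collecting terms, every contribution to $d(\cN^0(p))$ cancels except the one at the seam, which is supported on the single parallel pair $(\alpha_1 p,\alpha_1)$ (the unique length-$(n+1)$ prolongation of $p$ across the seam) and survives with coefficient $(-1)^{|p|}\bigl(1-(-1)^{\omega(p)}\bigr)$; in particular it vanishes precisely when $\omega(p)$ is even, consistent with \Cref{cor: trace elements type tilde A} and \Cref{cor: 0 trace not coboundary}. The statement for $\cN^0(q)$ follows by running the identical computation on the degree-$0$ elements $\bigl(s(\rot^i(q)),\rot^i(q)\bigr)$: here $d_L$, resp.\ $d_R$, prepends, resp.\ appends, the unique arrow forming a non-relation with the first, resp.\ last, arrow of $\rot^i(q)$, the telescoping runs the same way, and the survivor is $(\beta_n,\beta_n q)$ with coefficient $1-(-1)^{\omega(q)}$, where $\omega(q)=-|q|$. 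The only real difficulty is the sign bookkeeping, and within it the seam computation $c_r=(-1)^{\omega(p)}$; everything else — which arrow prolongs an anti-path, and that consecutive summands share a parallel pair — is forced by gentleness and by the combinatorics of the graph $G$ already in place, so the proof reduces to this one telescoping identity once the signs are organised.
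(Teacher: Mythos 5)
Your argument is correct and takes the same route as the paper, whose own proof of this lemma is just ``verified by direct computation which we omit'': your telescoping computation --- one $d_L$- and one $d_R$-term per summand of $\cN^0(p)$, cancellation at each junction via $c_{i+1}c_i^{-1}=(-1)^{n+|p||\alpha_{i+1}|}$, and the seam defect $c_r=(-1)^{\omega(p)}$ producing the factor $1-(-1)^{\omega(p)}$ --- is precisely that omitted computation, carried out. The only wrinkle is notational and inherited from the statement itself: the surviving parallel pair is the one written $(\alpha_n p,\alpha_n)=\cN^1(p)$ elsewhere in the paper, i.e.\ the left prolongation of $p$ by the arrow cyclically preceding $\alpha_1$, rather than literally ``$\alpha_1 p$''.
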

\begin{proof}
Verified by direct computation which we omit.
\end{proof}

\noindent Next, we treat atomic cocycles of type $A$. The same arguments as in $\tilde{A}$-case show that up to scalar, an atomic cocycle $f$ with $V(f) \subseteq G_0$ of type $A$ is uniquely determined by $G^{\star}(f)$, or equivalently, by $G(f)$. Note that in contrast to the $\tilde{A}$-case, there are no obstructions to attaching an atomic cocycle to a given suitable quiver of type $A$.

\noindent We are left with classifying atomic coycles of type $A$. 
\begin{lem}\label{lem: atomic cocycle type A coboundary or rank 1}
	Let $f$ be an atomic cocycle with $V(f) \subseteq G_0$ of type $A$. Then $f$ has rank $2$ and is a coboundary, or it has rank $1$.
\end{lem}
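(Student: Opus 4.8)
The plan is to turn the $L$- and $R$-arrows of $G^{\star}(f)$, whose shape is given by \Cref{lem: atomic coycles are of type A or tilde A}, into combinatorial identities between the parallel paths indexing $V(f)$ via formula \eqref{eq: Bardzell differential}. Write $r:=|V(f)|$ (the rank of $f$) and order $V(f)=\{v_1,\dots,v_r\}$ so that $G^{\star}(f)$ is the linear quiver $v_1\xrightarrow{R}w_1\xleftarrow{L}v_2\xrightarrow{R}\cdots\xleftarrow{L}v_r$ with $w_i=R(v_i)=L(v_{i+1})$; since the extra vertex $\{0\}=G(f)\setminus G^{\star}(f)$ is joined to $G^{\star}(f)$ by two arrows whose labels alternate with the rest, these are $v_1\xrightarrow{L}0$ and $v_r\xrightarrow{R}0$, i.e.\ $d_L(v_1)=0=d_R(v_r)$. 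If $r=1$ we are in the rank-$1$ alternative; so assume $r\ge 2$ and write $v_i=(p_i,q_i)$, noting that $n:=l(p_i)$ and $n':=l(q_i)$ are constant in $i$ since $f$ is levelled.

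Computing $R(v_i)=d_R(v_i)$ and $L(v_{i+1})=d_L(v_{i+1})$ from \eqref{eq: Bardzell differential} for $1\le i\le r-1$ (each being a single term, by gentleness, as soon as $n\ge 1$, resp.\ $n'\ge 1$) and equating them yields the \emph{shift relations}: $p_i$ and $q_i$ begin with the same arrow $\alpha_{i+1}$, $p_{i+1}$ and $q_{i+1}$ end with the same arrow $\beta_i$, and $(p_{i+1},q_{i+1})$ is obtained from $(p_i,q_i)$ by deleting that first arrow and appending $\beta_i$ on the right. If $n=0$ (resp.\ $n'=0$), the same comparison forces $\beta_i=\alpha_{i+1}$ and $q_{i+1}=\rot(q_i)$ (resp.\ $p_{i+1}=\rot(p_i)$), so that $q_1$ (resp.\ $p_1$) is complete and the arrow closing it into a cycle already witnesses $d_L(v_1)\neq 0$ in \eqref{eq: Bardzell differential} --- a contradiction. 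Hence $n,n'\ge 1$.

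Next I would show $r\le 2$. If $r\ge 3$ and $n,n'\ge 2$, the shift relations for $i=1,2$ give that $p_1$ and $q_1$ share their first two arrows $\alpha_2\alpha_3$ (using that the first arrow of $p_2$, resp.\ $q_2$, is the second arrow of $p_1$, resp.\ $q_1$); but the consecutive pair $\alpha_2\alpha_3$ must lie in $I$ to occur in $p_1\in\cP_n$ and must not lie in $I$ to occur in $q_1\notin I$ --- impossible. If instead $n=1$ (resp.\ $n'=1$), the shift relations for $i=1,2$ force the single arrow of $p_2$ (resp.\ $q_2$) to be simultaneously the first and the last arrow of $q_2$ (resp.\ $p_2$), whence \eqref{eq: Bardzell differential} gives $d_R(v_2)=0$ (resp.\ $d_L(v_2)=0$), contradicting that $v_2$ is an interior vertex of $G^{\star}(f)$. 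Thus $r\le 2$.

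Finally, for $r=2$ I would exhibit $f$ as a coboundary. By the shift relation for $i=1$, write $p_2=p_2'\beta_1$, $q_2=q_2'\beta_1$ and $p_1=\alpha_2p_2'$, $q_1=\alpha_2q_2'$, and set $u:=(p_2',q_2')$, a nonzero element of $C_{\cP}(A)$ of level $(n-1,n'-1)$ (the case $n=n'=1$ being excluded by the argument just given, applied to $v_2$). A direct computation with \eqref{eq: Bardzell differential} shows $d_L(u)=\pm v_1$ and $d_R(u)=\pm v_2$, so $d(u)=\pm v_1\pm v_2\neq 0$; since $d(u)$ is again an atomic cocycle of type $A$ with $V(d(u))\subseteq G_0$ and the same graph $G^{\star}(d(u))=G^{\star}(f)$, \Cref{cor: atomic cocycle determined by its graph} gives $d(u)=cf$ for some $c\in\Bbbk^{\times}$, so $f=d(c^{-1}u)$. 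The delicate point in all of this is ruling out $r\ge 3$: while the generic case $n,n'\ge 2$ collapses to the observation that a two-arrow path cannot be a relation and a non-relation at once, the low-length cases $n\le 1$ or $n'\le 1$ (and the $n=0$, $n'=0$ branches of the shift relations) each call for their own short but separate inspection of \eqref{eq: Bardzell differential}.
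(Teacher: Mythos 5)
Your proof is correct, and it reaches the same cobounding element as the paper, but the route to the rank bound is genuinely more computational than the paper's. The paper's proof rests on two abstract facts about $\overline{G}$ recorded in observation (4): $L^2=R^2=0$, and (implicitly) that any two vertices $v,v'$ with $R(v)=L(v')\neq 0$ admit a unique common parent $w$ with $L(w)=v$, $R(w)=v'$. From this it reads off $L(v)=L^2(w)=0$ and $R(v')=R^2(w)=0$ in one line, so no vertex of a common-child pair can be interior and connectedness forces $r\leq 2$; the coboundary statement then follows from $d^2(w)=0$, which matches the signs of $d_R(d_L(w))$ and $-d_L(d_R(w))$ without any explicit computation. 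You instead unwind everything through formula \eqref{eq: Bardzell differential}: your ``shift relations'' are a by-hand reconstruction of the common-parent structure, and your exclusion of $r\geq 3$ (a two-arrow subword of $p_1$ and $q_1$ cannot simultaneously be a relation and a non-relation, plus the low-length cases) is exactly the combinatorial shadow of $L^2=0$. This costs you the case analysis on $n,n'$ but buys a self-contained argument at the level of the differential formula; your element $u=(p_2',q_2')$ is precisely the paper's $w$, and finishing via \Cref{cor: atomic cocycle determined by its graph} rather than via $d^2(w)=0$ is a legitimate alternative. Two small points to tighten: your exclusion of $n=n'=1$ when $r=2$ should not be deferred to ``the argument just given, applied to $v_2$'' (that argument needs $v_2$ to be interior); the correct and immediate reason is that the shift relation would force $p_1=q_1=\alpha_2$, whence $R(v_1)=(\alpha_2\beta_1,\alpha_2\beta_1)$ would require $\alpha_2\beta_1$ to lie in $I$ and not in $I$ at once, so $v_1,v_2$ could not have a common child. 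Second, your parity of ``in $I$ versus not in $I$'' for $\cP_n$ and for $q\notin I$ follows the operational meaning of $\cP_n$ in the Bardzell resolution (consecutive pairs of elements of $\cP_n$ are relations), which is opposite to the literal wording of the paper's definition of anti-path; this is a typo in the paper rather than an error on your part, but you should state explicitly which convention you are using.
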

\begin{proof}
	The graph $G^{\star}$ has the property that whenever $v, v' \in G^{\star}$ are such that $R(v)=L(w) \in G^{\star}$, then there exists a unique $w \in G^{\star}$ such that $L(w)=v$ and $R(w)=v'$. Now if $f$ has rank $r \geq 2$, there exists $v \in V(f)$ which has a successor $v' \in V(f)$. Since $R(v)=L(v')$ by definition, there exists a unique $w \in G^{\star}$ such that $L(w)=v$ and $R(w)=v'$. In particular, $L(v)=L^2(w)=0=R^2(w)=R(v')$ and hence $r=2$ because $f$ is  connected. The fact that $d(f)=0$, now also implies $d_R(\lambda_f(v)v)=-d_L(\lambda_f(v') v')$. On the other hand, we have $d^2(w)=0$ which implies $d_R (d_L(w))=-d_L(d_R(w))$ and which shows that $f$ is a multiple of $d(w)$ and therefore a coboundary.
\end{proof}

\begin{lem}\label{lem: special vertices in canonical partition}
	Let $v \in G^{\star}$ such that $L(v)=0=R(v)$. If $f \in C_{\cP}(A)$ is any cocycle such that $v \in V(f)$, then $\lambda_f(v) v$ is an atom of $f$.
\end{lem}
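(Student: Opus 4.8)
The plan is to identify the atom of the canonical decomposition of $f$ that involves $v$ and to show that it has rank one. Let $a$ be the unique atom in the canonical decomposition $f = \sum_j f_j$ with $v \in V(a)$; explicitly $a = \sum_{u \in V(a)} \lambda_f(u)\,u$, so the goal is to prove $V(a) = \{v\}$. By construction $a$ is \emph{levelled}, so that $V(a) \subseteq V_{m,m'}(f)$ where $(m,m') := (h(v), h'(v))$, and \emph{connected}, meaning the underlying graph of $G^\star(a)$ is connected. It is therefore enough to show that $v$ is an isolated vertex of $G^\star(a)$: connectedness then forces $G^\star(a) = \{v\}$, whence $V(a) = \{v\}$ and $a = \lambda_f(v)\,v$ is an atom of $f$.

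To see that $v$ is isolated, I would treat the arrows of $G^\star(a)$ with source $v$ and those with target $v$ separately. Since $v \in G^\star$ it is not of the form $(x,x)$ with $x \in Q_0$, so by property (3) the only arrows of $\overline{G}$ with source $v$ are its $L$-arrow, with target $L(v)$, and its $R$-arrow, with target $R(v)$; by hypothesis both targets equal $0$, and $0 \notin G^\star$, so $v$ is the source of no arrow of $G^\star(a)$. For arrows into $v$: if $u \to v$ is an arrow of $\overline{G}$ then $u$ is a parent of $v$, and since the formula for the Bardzell differential shows that both $d_L$ and $d_R$ raise $h$ and $h'$ each by one, such a $u$ lies in the level set indexed by $(m-1, m'-1)$. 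On the other hand every vertex of $G^\star(a)$ is either an element of $V(a) \subseteq V_{m,m'}(f)$ or a child of such an element, hence lies in level $(m,m')$ or $(m+1,m'+1)$; in particular $G^\star(a)$ contains no parent of $v$, so $v$ is also the target of no arrow of $G^\star(a)$. Hence $v$ is isolated and we are done.

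There is no genuine obstacle here; the one delicate point is the use of \emph{levelledness} of $a$ to exclude the incoming arrows. In the ambient graph $G^\star(f)$ the vertex $v$ may well be joined to another element of $V(f)$ at the same level through a common parent lying in $V(f)$, but once we pass to the levelled, connected summand $a$ this potential parent sits one level too low to occur in $G^\star(a)$. It is also worth noting that the hypothesis that $f$ be a cocycle is never actually used in this argument and could be dropped; it is stated only because that is the generality in which the lemma will be applied.
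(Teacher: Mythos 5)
Your proof is correct and follows essentially the same route as the paper: locate the unique atom of the canonical decomposition containing $v$ and use $L(v)=0=R(v)$ together with levelledness and connectedness to force that atom to be $\lambda_f(v)v$. The only (harmless) difference is that you spell out the absence of incoming arrows via the level argument, where the paper just notes that $v$ is a sink of $G^{\star}(f_{j_0})$; your observation that the cocycle hypothesis is not actually needed is also consistent with the paper's argument.
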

\begin{proof}
	Suppose $f=\sum_{j \in J}f_j$ is the canonical decomposition. Because $V(f_j) \cap V(f_i)$ for all $i \neq j \in J$, there exists by assumption a unique $j_0 \in J$ such that $v \in V(f_{j_0})$. But since $L(v)=0=R(v)$, $v$ is a sink of $G^{\star}(f_{j_0})$ and hence the subset $V(f_{j_0})$ of $\overline{G}^{\star}$ cannot be connected if it is strictly larger than $\{v\}$, so $V(f_{j_0})=\{v\}$ and thus $\lambda_f(v)v$ is an atom of $f$.
\end{proof}

\begin{lem}\label{lem: classification atomic cocycle type A rank 1}
	Let $f$ be an atomic cocycle of type $A$ and rank $1$. Then $f$ is a coboundary or exactly one of the following is true.
	
	\begin{enumerate}
		\item $f$ is a scalar multiple of $c= \cN^1(p) \coloneqq (\alpha_n p, \alpha_n)$ for some complete anti-path $p=\alpha_1 \cdots \alpha_n$. If $f$ is a coboundary, then $f=d(\lambda \cN^0(p))$ for some $\lambda \in \Bbbk^{\times}$. 
		\item $f$ is a scalar multiple of $c= \cN^1(q)\coloneqq (\beta_n, \beta_n q)$ for some complete path $q=\beta_1 \cdots \beta_n$. If $f$ is a coboundary, then $f=d(\lambda \cN^0(q))$ for some $\lambda \in \Bbbk^{\times}$. 
		\item $f$ is not a coboundary and is a scalar multiple of $c=(u, u_{\circ}) \in G^{\star}$ for some maximal anti-path $u$ with $u_{\circ}\neq \emptyset$.
		\item $f$ is not a coboundary and is a scalar multiple of $(s(u), u)$ for some closed maximal path $u$ of $A$. 
		\item $f$ is a scalar multiple of $c=(\alpha, \alpha)$, where $\alpha \in Q_1$.
	\end{enumerate}
	Conversely, any $c$ of the above shapes is an atomic cocycle.
\end{lem}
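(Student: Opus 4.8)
The plan is to carry the analysis of the previous lemmas one step further, exploiting that $f$ has rank $1$ and that $G^\star(f)$ is of type $A$. Since $f$ has rank $1$ we may write $f = \lambda c$ with $\lambda \in \Bbbk^\times$ and $c = (p,q)$ a single parallel path; since $c$ is of type $A$, the proof of \Cref{lem: atomic coycles are of type A or tilde A} gives $0 \in G(c)$, so at least one of $d_L(c), d_R(c)$ vanishes. First I would combine this with the cocycle equation $d_L(c) = -d_R(c)$ and property~(5) of $\overline{G}$ recorded above: for $c \in G_0$ the set $\{L(c), R(c)\}$ is linearly independent or contains $0$, and together with $d_L(c) = -d_R(c)$ this pins $c$ down to one of a few terminal configurations --- essentially $c$ is a sink of $G^\star$, with a degenerate exception involving a loop in which the two prolongations of $c$ cancel. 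The rest of the proof is a case-by-case study of these configurations.

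The combinatorial heart is the classification of these terminal configurations $(p,q)$ with $l(p) + l(q) \ge 1$: the parallel paths admitting no left prolongation $\alpha$ with $\alpha p \in \cP$, $\alpha q \notin I$ and no right prolongation $\beta$ with $p\beta \in \cP$, $q\beta \notin I$. Splitting on $l(p)$ and $l(q)$ and using the gentleness axioms (which cap at one the prolongations of a path, resp.\ anti-path, on either side) one finds exactly: $c = (\alpha, \alpha)$ for an arrow $\alpha$ that is a maximal anti-path (case~(5)); $c = (u, \para{u})$ for a maximal anti-path $u$ with $\para{u} \ne \emptyset$ (case~(3)); $c = (s(u), u)$ for a closed maximal path $u$ of $A$ (case~(4)); the single-term elements $\cN^1(p) = (\alpha_n p, \alpha_n)$ and $\cN^1(q) = (\beta_n, \beta_n q)$ coming from complete anti-paths resp.\ complete paths (cases~(1) and~(2)); and a residual family in which $p$ and $q$ share a first or last arrow. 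For the residual family I would chop off the shared arrow to produce an explicit preimage under $d$, exactly as in the rank-$2$ part of \Cref{lem: atomic cocycle type A coboundary or rank 1}, so these are coboundaries.

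It remains to decide which of the listed shapes are coboundaries. For $\cN^1(p)$ and $\cN^1(q)$ this is answered by \Cref{lem: differentials of atomic cocycles of type tilde A}: there $d(\cN^0(p))$ is displayed as the scalar multiple $(-1)^{|p|}\bigl(1-(-1)^{\omega(p)}\bigr)$ of $\cN^1(p)$, and similarly for $q$, so $\cN^1(p) = d(\lambda\,\cN^0(p))$ for a suitable $\lambda$ as soon as that scalar is invertible in $\Bbbk$, and otherwise $\cN^1(p)$ is not a coboundary --- here one uses, as in the proof of \Cref{cor: trace elements type tilde A}, that the only cochains whose differential can produce $\cN^1(p)$ are scalar multiples of $\cN^0(p)$, together with \Cref{cor: relations in cohomology between atomic cocycles} to exclude realisations spread over several atoms. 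That $(u, \para{u})$ and $(s(u), u)$ are never coboundaries follows from \Cref{lem: special vertices in canonical partition}: a sink with both children $0$ occurs in a cocycle only as an atom, so an equation $\lambda c = d(g)$ may be arranged with $g$ atomic, whence $c$ must be a one-sided differential of a rank-$1$ cochain $g$, which is impossible because the defining property of $\para{-}$ (resp.\ of maximal paths of $A$) forbids the shared arrow this would require.

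Finally, the converse is a direct verification: for each displayed shape, \eqref{eq: Bardzell differential} and the gentleness axioms give $d(c) = 0$, while $V(c) = \{c\}$ is automatically levelled and connected and has $0$ among the children of $c$, so $c$ is an atomic cocycle of type $A$ and rank $1$. I expect the main obstacle to be the enumeration of the terminal configurations: one must cover all possibilities for $(p,q)$ without gaps and, for each one absent from the list, exhibit by hand the atomic cochain whose differential realises $c$ (or a connected summand containing $c$), and it is here that the combinatorics of the Bardzell differential and of the gentle relations enters most delicately.
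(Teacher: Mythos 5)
Your reduction to a single vertex $v=(p,q)$ with $L(v)=0=R(v)$ and the idea of enumerating such ``terminal configurations'' matches the paper's strategy, and your treatment of cases (3)--(5) and of the converse is essentially the paper's. The genuine gap is in how you dispose of the ``residual family'' and, dually, in how cases (1) and (2) are supposed to emerge from the enumeration. You propose to ``chop off the shared arrow to produce an explicit preimage under $d$, exactly as in the rank-$2$ part of \Cref{lem: atomic cocycle type A coboundary or rank 1}''. That one-step argument only works when the parent $w$ obtained by chopping has its \emph{other} child equal to zero: in general $d(w)=\pm v\pm R(w)$ (or $\pm L(w)\pm v$) is a rank-$2$ element, hence not a scalar multiple of $v$, and no single atomic cochain realises $v$ as a coboundary. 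This is precisely the point where the rank-$1$ case differs from the rank-$2$ case, where both children of $w$ already belong to $V(f)$. The paper instead iterates the construction: it produces a chain $v_0=v,\,v_1=R(w_0),\,v_2,\dots$ with parents $w_i$ satisfying $L(w_i)=v_i$, $R(w_i)=v_{i+1}$, shows along the way that the chain can only continue when $l(p')=0$ or $l(q')=0$, and then exhibits $f$ as $d$ of a \emph{telescoping} linear combination $\sum_i\lambda_iw_i$ when the chain terminates. Crucially, the dichotomy ``chain terminates'' versus ``chain wraps around'' is exactly what separates the coboundaries from cases (1) and (2): completeness of $p$ is \emph{not} a local property of the terminal configuration $(r,\beta)$, since a non-complete anti-path $r$ beginning and ending suitably produces a vertex of the same local shape as $\cN^1(p)$ which is a coboundary only via the iterated argument. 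Your enumeration therefore cannot ``find exactly'' the shapes (1) and (2) by inspecting $(p,q)$ alone, and your residual family is not killed by a single chop-off.

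A secondary, smaller point: your argument that $(u,\para{u})$ and $(s(u),u)$ are never coboundaries routes through \Cref{lem: special vertices in canonical partition} and a claim about ``one-sided differentials''; the paper's (and the cleanest) reason is simply that these vertices have no parents in $\overline{G}$ at all, so they cannot occur in $V(d(g))\subseteq L(V(g))\cup R(V(g))$ for any $g$. Your appeal to \Cref{cor: relations in cohomology between atomic cocycles} and to the neighbourhood of the parents of $\cN^1(p)$ for the refined statement ``$f=d(\lambda\cN^0(p))$'' is, by contrast, the same as the paper's and is fine.
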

\begin{proof}To begin with, we observe that $L(c)=R(c)=0$ for all rank $1$ elements in $C_{\cP}(A)$ which are of the shape from (1)--(4). Hence such a $c$ is always a cocycle.
	Let $v=(p,q)$ be the unique vertex of $V(f)$ and write $p=\alpha_1 \cdots \alpha_n \in \cP_n$ and $q=\beta_1 \cdots \beta_m$, where $m=l(q)$. Because $f$ is a cocycle we have $L(v)=0=R(v)$. For $L$ this means that either there does not exist $\alpha \in Q_1$ such that $\alpha \alpha_1 \not \in I$  or, there exists such $\alpha$ but $\alpha \beta_1 \in I$. One deduces an analogous condition from $R(v)=0$.
	
	Now suppose $v$ has no parents. In particular, $f$ is not a coboundary. This happens if $l(p)=0$, $l(q)=0$, or we have, $\alpha_1 \neq \beta_1$ and $\alpha_n \neq \beta_m$. If $l(p)> 0$, then because of the nature of the relations in a gentle quiver, this means that $p$ cannot be extended to a longer anti-path on either of its ends and is hence maximal as well as $\para{p}\neq \emptyset$ which implies $q=\para{p}$. Likewise if $l(p)=0$, $L(v)=0=R(v)$ means that $q$ cannot be extended to a longer path of $A$. It follows that $f$ is of the shape in (3) or (4).
	
	Next, we assume that $v$ has parents, that is, it lies in the image of $L$ or $R$. Because the other case is analogous, we restrict ourselves to the case when $v=L(w)$ for some $w =(p', q') \in G^{\star}$. If $l(p')=l(q')=0$, then $v=(\alpha, \alpha)$ for some $\alpha \in Q_1$ with $s(\alpha)=s(q')=s(p')$ as claimed in (5). Otherwise, $w \in \overline{G}_0 \setminus \{0\}$ and $w$ is uniquely determined by $v$.
	
	From now on we may assume that $w \in G$, that is, $l(p')+l(q') \geq 1$. Then if $R(w)\neq 0$, one has $L(R(w))=R(L(w)=R(v)=0=R(R(w))$ so that $R(w)$ is again an atomic cocycle of rank $1$. Moreover, if $R(w) \neq 0$, we claim that $l(p')=0$ or $l(q')=0$. We have $R(w)=(p'\beta, q'\beta)$ for some $\beta \in Q_1$ such that $p' \beta \in \cP_n$. But $\alpha p'=p$ and hence $l(p')=0$ or $\alpha p' \beta \in \cP_{n+1}$. Therefore, if $l(p') > 0$, then $L(w)=R(v)=0$ just means that $\alpha q' \beta \in I$.  But because $L(w)=v \neq 0$, we know that $\alpha q' \not \in I$  and  because $R(w) \neq 0$, we know that $q' \beta \not \in I$. Since $I$ is generated by quadratic zero relations, this is possible only if $l(q')=0$. Now by repeatedly applying the same procedure that produced $R(w)$ from $v$ to the atomic cocycle $R(w)$ and so forth, we obtain an ordered sequence of vertices $v_0=v, v_1=R(w), v_2, \dots$ in $G^{\star}$ of the same height as well as vertices $w_0=w, \dots, w_s$ such that $L(w_i)=v_i$ and $R(w_i)=v_{i+1}$ for all $i \geq 0$. Either, this procedure terminates after finitely many steps with an element $v_s$ for which $R(v_s)=0$, or, it does not. In the finite case we observe that $d(v_s)=\pm L(v_s)$ and that therefore $d(\sum_{i=0}^{s-1} \lambda_i w_i)=f$ for appropriate scalars $\lambda_i \in \Bbbk^{\times}$ which shows that $f$ is a coboundary. In the infinite case, the finiteness of $Q$ implies that $p$ must be a complete anti-path if $l(q')=0$ or $q$ must be a complete path if $l(p')=0$. We therefore conclude in this scenario that $v$ is of the claimed shape in (1) and (2). Because the case of a complete path is analogous, we will now assume that $f$ is a multiple of $\cN^1(p)$ for some complete anti-path $p$. If $f$ is a coboundary, then by \Cref{cor: relations in cohomology between atomic cocycles} and because $\cN^1(p)$ is atomic, $f=d(g)$ for an atomic $g$. We finally note that $V(\cN^0(p))$ coincides with the neighbourhood of the parents of $f$ and hence $V(g) \subseteq V(\cN^0(p))$. Now the usual cancellation arguments and $d(g)=f$ force $g$ to be a scalar multiple of $\cN^0(p)$. 
\end{proof}

\begin{lem}\label{lem: linear dependencies between traces} Let $p$ be a complete path or anti-path. Then $\cN^0(p)$ and $\cN^0(\rot(p))$ are linear multiples of each other. Similarly, $\cN^1(p)$ and $\cN^1(\rot(p))$ are linear multiples of each other in cohomology. The assignment $\alpha \mapsto (\alpha, \alpha)$ induces an embedding into $\HHH^{1,0}(A,A)$ of the cokernel of the map
	$\Bbbk^{Q_0} \rightarrow \Bbbk^{Q_1}$, $(\lambda_x)_{x \in Q_0} \mapsto (\lambda_{t(\alpha)}-\lambda_{s(\alpha)})_{\alpha \in Q_1}$.
\end{lem}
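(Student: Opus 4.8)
The statement collects three independent facts, and the plan is to establish each by a direct computation with the Bardzell differential \eqref{eq: Bardzell differential} together with the explicit formulas for $\cN^0$ and $\cN^1$ from \Cref{cor: trace elements type tilde A} and \Cref{lem: classification atomic cocycle type A rank 1}. For the first fact I would argue that $\cN^0(p)$ and $\cN^0(\rot(p))$ are proportional already as cochains in $C_{\cP}(A)$: the complete (anti-)paths $p$ and $\rot(p)$ have the same period $r$ with $\rot^r(p)=p$, so $\rot$ merely permutes the $r$ summands of $\cN^0(p)$ cyclically; hence $\cN^0(p)$ and $\cN^0(\rot(p))$ are supported on the same set of parallel paths (the rotation class of $p$), and dividing the coefficients in \Cref{cor: trace elements type tilde A} — using $|\rot(p)|=|p|$ — gives a single sign independent of the summand. (Alternatively, the rigidity behind \Cref{lem: atomic cocycle up to sign} applies verbatim: the coefficients of such a cochain supported on a cyclic graph are determined up to one global scalar by the local cancellation of $L$-contributions against $R$-contributions at common children, which depends only on data shared by $\cN^0(p)$ and $\cN^0(\rot(p))$.) The complete-path case is identical with the two entries of each pair exchanged.

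For the second fact the idea is to produce an explicit lower-degree cochain bounding a non-degenerate combination of $\cN^1(p)$ and $\cN^1(\rot(p))$. For a complete anti-path $p=\alpha_1\cdots\alpha_n$ I would take $g=(p,s(p))$, which is one of the summands of $\cN^0(p)$, and feed it into \eqref{eq: Bardzell differential}: gentleness makes each of $d_L(g)$ and $d_R(g)$ a single term, and completeness identifies the contributing arrows as $\alpha_n$ and $\alpha_1$, so $d_L(g)=\pm(\alpha_n p,\alpha_n)=\pm\cN^1(p)$ and $d_R(g)=\pm(p\alpha_1,\alpha_1)=\pm(\alpha_1\cdots\alpha_n\alpha_1,\alpha_1)=\pm\cN^1(\rot(p))$. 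Thus $d(g)=\varepsilon_1\cN^1(p)+\varepsilon_2\cN^1(\rot(p))$ with $\varepsilon_1,\varepsilon_2\in\{\pm1\}$, so $[\cN^1(p)]$ and $[\cN^1(\rot(p))]$ are proportional in cohomology. For a complete path $q$ one takes $g=(s(q),q)$ and the identical computation gives $d_L(g)=\pm\cN^1(q)$ and $d_R(g)=\pm\cN^1(\rot(q))$.

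For the third fact, recall from \Cref{lem: classification atomic cocycle type A rank 1} (case (5)) that $(\alpha,\alpha)\in C_{\cP}^{1,0}(A)$ is a cocycle for each $\alpha\in Q_1$, so $\alpha\mapsto[(\alpha,\alpha)]$ is a linear map $\Bbbk^{Q_1}\to\HHH^{1,0}(A,A)$. A short use of \eqref{eq: Bardzell differential} gives
\[
d(x,x)=\sum_{\alpha\colon s(\alpha)=x}(\alpha,\alpha)-\sum_{\beta\colon t(\beta)=x}(\beta,\beta),
\]
whence $d\big(\sum_x\lambda_x(x,x)\big)=\sum_{\gamma\in Q_1}(\lambda_{s(\gamma)}-\lambda_{t(\gamma)})(\gamma,\gamma)$; so the stated map $\Bbbk^{Q_0}\to\Bbbk^{Q_1}$ lands (up to an overall sign) in the kernel of $\Bbbk^{Q_1}\to\HHH^{1,0}(A,A)$ and induces a map on the cokernel. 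To get that this map is \emph{injective} I would prove the reverse inclusion of kernels: if $\sum_\gamma c_\gamma(\gamma,\gamma)=d(g)$ for some $g\in C_{\cP}^{0,0}(A)$, write $g=\sum_x\lambda_x(x,x)+h$, where $h$ is a combination of basis elements $(x,w)$ with $w$ a nontrivial cyclic path at $x$ of internal degree $0$; since $l(w)\ge 1$, every summand of $d_L(x,w)$ and $d_R(x,w)$ has a path-entry of length $\ge 2$ and hence is linearly independent of all the $(\gamma,\gamma)$, so comparing $(\gamma,\gamma)$-coefficients in $d(g)$ forces $c_\gamma=\lambda_{s(\gamma)}-\lambda_{t(\gamma)}$, i.e.\ $(c_\gamma)_\gamma$ lies in the image of $\Bbbk^{Q_0}\to\Bbbk^{Q_1}$.

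The hard part will be precisely this last inclusion — ruling out that a coboundary supported entirely on the elements $(\alpha,\alpha)$ could arise from a $0$-cochain involving nontrivial cyclic paths. The observation that makes it go through is the length bookkeeping: the Bardzell differential strictly increases the length of the path-entry of an element $(x,w)$ with $l(w)\ge 1$, so the images of such elements never meet the span of the $(\alpha,\alpha)$. Everything else — the exact signs $\varepsilon_i$, the constant ratio in the first fact, and (should one prefer to compute on $C(A)$ rather than on $C_{\cP}(A)$) the compatibility $V\circ U=\operatorname{Id}$ — is a routine sign chase with \eqref{eq: Bardzell differential}.
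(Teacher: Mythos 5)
Your argument is correct, and for the first two claims it is the same as the paper's: the paper likewise disposes of the $\cN^0$ claim by observing that $\cN^0(p)$ and $\cN^0(\rot(p))$ coincide up to sign, and proves the $\cN^1$ claim by computing $d\big((p,e_{s(p)})\big)$ to be a signed sum of $\cN^1(p)$ and $\cN^1$ of a rotation of $p$ (the paper writes $\rot^{-1}(p)$ where your computation yields $\rot(p)$; the discrepancy is immaterial since either version propagates proportionality along the whole rotation class). The only genuine divergence is the third claim, which the paper does not prove but simply cites as a graded version of an external result. There you supply a self-contained argument, and it is sound: well-definedness follows from $d\big(\sum_x\lambda_x(x,x)\big)=\pm\sum_{\gamma}(\lambda_{s(\gamma)}-\lambda_{t(\gamma)})(\gamma,\gamma)$, and injectivity on the cokernel follows from your length bookkeeping --- a $0$-cochain summand $(x,w)$ with $l(w)\geq 1$ has $d(x,w)$ supported on pairs whose second entry has length $l(w)+1\geq 2$, hence cannot contribute to the span of the $(\gamma,\gamma)$. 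This buys a proof that is independent of the reference, at the cost of a page of bookkeeping the paper avoids. One shared imprecision worth flagging: the cochain-level proportionality of $\cN^0(p)$ and $\cN^0(\rot(p))$ holds with a uniform sign ratio $(-1)^{l(p)+|p||\alpha_1|}$ at all but the ``wrap-around'' support element, where consistency is equivalent to $\omega(p)$ being even --- exactly the condition under which $\cN^0(p)$ is a cocycle; so the statement is only literally true in the cases where it is used, a caveat present in the paper's one-line proof as well as in yours.
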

\begin{proof}
	The first claim follows because $\cN^0(p)$ and $\cN^0(\rot(p))$ coincide up to sign. In case $p$ is a complete anti-path, one finds that $d((p, e_{s(p)}))$ is a signed sum of $\cN^1(p)$ and $\cN^1(\rot^{-1}(p))$ implying the second claim. The second claim is a version of \cite[Proposition 5.4]{OpperDerivedEquivalences}.
\end{proof}
\noindent The kernel of the map $\Bbbk^{Q_0} \rightarrow \Bbbk^{Q_1}$ is given by the image of the diagonal map and the elements $(\alpha, \alpha)$ span a subspace of $\HHH^{1,0}(A,A)$ which is isomorphic to $\HH^1(|Q|, \Bbbk) \cong \HH^1(\Sigma_A, \Bbbk)$. In particular, if $T \subseteq Q$ is a spanning tree, then $\{(\alpha, \alpha) \mid \alpha \in Q_1\}$ constitutes a basis of this subspace.
\begin{rem}The image of $\HH^1(\Sigma_A, \Bbbk)$ inside $\HHH^1(A,A)$ from \Cref{lem: linear dependencies between traces} is also related to the fundamental group of $A$ in the sense of \cite{MartinezVillaDeLaPenaFundamentalGroup} which can be generalised to graded algebras in the natural way.  In general, these groups depend on a chosen presentation of an algebra but is independent if the algebra is is monomial, e.g.~is a gentle algebra, in which case the fundamental group of $A$ is free abelian. There is a natural identification $\pi_1(A) \cong \pi_1(\Sigma_A)$ and following \cite{DeLaPenaSaorin}, for any ordinary (=ungraded) algebra $B$ there is a embedding $\Hom(\pi_1(B), \Bbbk) \hookrightarrow \HHH^1(B,B)$. If $A$ is concentrated in degree $0$, then under the identification $\Hom(\pi_1(\Sigma_A), \Bbbk) \cong \HH^1(\Sigma_A, \Bbbk)$, the embedding coincides with the embedding $\HH^1(\Sigma_A, \Bbbk) \hookrightarrow \HHH^{1,0}(A,A)$ induced by \Cref{lem: linear dependencies between traces}. The work \cite{BriggsRubioYDegrassiMaximalTori} shows that for every ordinary monomial algebra $B$, the rank of $\pi_1(B)$ agrees with the rank of any maximal torus in $\HHH^{1}(B, B)$.
\end{rem}

\begin{thm}\label{thm: appendix Hochschild basis}
	Let $(Q,I)$ be a graded gentle quiver, $A$ its associated graded gentle algebra which neither assumed to be homologically smooth nor proper. Choose
	\begin{itemize}
			\item  any spanning tree $T$ of $Q$,
		\item  any complete set of representatives $\cP^c$ (resp.~$\cQ^c$) of all rotation classes of complete anti-paths (resp.~paths) in $(Q,I)$.
	
	\end{itemize} Then for every $n \geq 0$ and every $d \in \mathbb{Z}$, the following elements constitute a $\Bbbk$-linear (Schauder) basis of $\HHH^{n,d}(A,A)$:
	\begin{enumerate}
		\item the unit $\mathbb{1}=\sum_{x \in Q_0}(x, x)$, if $(n,d)=(0,0)$;
		\item all elements of the form $\cN^s(p)$, $p \in \cP^c$, $s \in \{0,1\}$, such that $(n,d)=(l(p)+s, -|p|)$ and where $\omega(p) \in 2 \mathbb{Z} + s$ or $\operatorname{char} \Bbbk=2$.
		\item all elements of the form $\cN^s(q)$, $q \in \cQ^c$, $s \in \{0,1\}$, such that $(n,d)=(s,|q|)$ and where $\omega(q) \in 2 \mathbb{Z}+s$ or $\operatorname{char} \Bbbk = 2$.
		\item all elements of the form $(u, u_{\circ})$, where $u$ is a maximal anti-path with $u_{\circ}\neq \emptyset$, and $(n,d)=(l(u),|u_{\circ}|-|u|)$.
		\item all elements of the form $(s(u), u)$, where $u$ is a closed maximal path of of $A$ and $(n,d)=(0, |u|)$.
		\item all elements of the form $(\alpha, \alpha)$, $\alpha \in Q_1 \setminus T_1$, such that $(n,d)=(1,0)$.
	\end{enumerate}
\end{thm}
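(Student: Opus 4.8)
The statement is the culmination of the structural results proved above, and the plan is simply to assemble them. Fix integers $n$ and $d$. Since the comparison maps $U,V$ exhibit $C_{\cP}^{\bullet}(A)$ as a retract of the Hochschild complex, $\HHH^{n,d}(A,A)$ is the cohomology at cohomological degree $n$ of the complex $C^{\bullet,d}_{\cP}(A)$. The first move is to reduce everything to atomic cocycles: by \Cref{cor: differential determined by differential of atoms} a cochain is a cocycle precisely when every atom in its canonical decomposition is, so the cocycles of bidegree $(n,d)$ are exactly the convergent sums of atomic cocycles of that bidegree; and by \Cref{cor: behaviour of canonical decompositions under differential} and \Cref{cor: relations in cohomology between atomic cocycles} the coboundary relations among atomic cocycles again decompose atomically. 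Thus it suffices to classify atomic cocycles, decide which are coboundaries, and record the linear relations they satisfy in $\HHH^{\bullet}(A,A)$; the surviving representatives will form the asserted basis.

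The classification is already in hand. By \Cref{lem: atomic coycles are of type A or tilde A} together with \Cref{cor: atomic cocycle determined by its graph}, an atomic cocycle is, up to a nonzero scalar and once its graph is recorded, one of the following: a vertex cochain $(x,x)$ if its support does not lie in $G$; a type-$\tilde A$ element, which by \Cref{cor: trace elements type tilde A} is $\cN^0(p)$ for a complete anti-path $p$ or $\cN^0(q)$ for a complete path $q$; a type-$A$ cocycle of rank $2$, which by \Cref{lem: atomic cocycle type A coboundary or rank 1} is a coboundary; or a type-$A$ cocycle of rank $1$, which by \Cref{lem: classification atomic cocycle type A rank 1} is $\cN^1(p)$, $\cN^1(q)$, a term $(u,u_{\circ})$ with $u$ a maximal anti-path and $u_{\circ}\neq\emptyset$, a term $(s(u),u)$ with $u$ a closed maximal path, or $(\alpha,\alpha)$ for an arrow $\alpha$.

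Next one settles the coboundary question and the relations. Type-$\tilde A$ cocycles are never coboundaries (\Cref{cor: 0 trace not coboundary}), while \Cref{lem: differentials of atomic cocycles of type tilde A} determines for which winding numbers $\omega$ the element $\cN^0(p)$ (resp.\ $\cN^0(q)$) is actually a cocycle and, combined with \Cref{lem: classification atomic cocycle type A rank 1}, when $\cN^1(p)$ (resp.\ $\cN^1(q)$) is a coboundary — namely exactly as $d(\lambda\cN^0(\cdot))$ — which is the source of the parity conditions on $\omega$ (and the characteristic-$2$ exception) in items (2) and (3). The terms $(u,u_{\circ})$ and $(s(u),u)$ are never coboundaries by \Cref{lem: classification atomic cocycle type A rank 1}, giving items (4) and (5). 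For the remaining ones, \Cref{lem: linear dependencies between traces} is decisive: $\cN^s(p)$ and $\cN^s(\rot(p))$ agree up to a scalar (in cohomology for $s=1$), so choosing one representative per rotation class — the sets $\cP^c,\cQ^c$ — removes that redundancy; the differentials of the cochains $(e_x,e_x)$ realise the span of the $(x,x)$ modulo coboundaries as the kernel of $\Bbbk^{Q_0}\to\Bbbk^{Q_1}$, which is the diagonal since $Q$ is connected, leaving the single class $\mathbb{1}=\sum_{x\in Q_0}(x,x)$ (item (1), not a coboundary since it sits in the minimal relevant degree); and the $(\alpha,\alpha)$ descend to a copy of $\HH^1(|Q|,\Bbbk)\cong\HH^1(\Sigma_A,\Bbbk)$ with basis $\{(\alpha,\alpha):\alpha\in Q_1\setminus T_1\}$ (item (6)). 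Reading off the explicit formulas yields the stated bidegrees $(n,d)$.

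Finally one checks that the list is a basis in the Schauder sense. Once the rotation representatives and the tree $T$ are fixed, the distinct elements of (1)--(6) are distinct atoms with pairwise disjoint supports $V(\cdot)$: $\cN^0$ and $\cN^1$ sit at different heights, different rotation classes involve different (anti-)paths, the anti-paths occurring in $\cN^1$ and the (anti-)paths of $(u,u_{\circ})$ and $(s(u),u)$ are distinguished by maximality, and $(\alpha,\alpha)$ and $(x,x)$ are separated by length. Hence, using \Cref{lem: special vertices in canonical partition} and the atomic decomposition of coboundaries, no nontrivial (possibly infinite) linear combination of them is a coboundary; this gives both their linear independence in $\HHH^{n,d}(A,A)$ and, since every cocycle is a convergent sum of atoms each cohomologous to a unique listed element or to a coboundary, that they span topologically — a Schauder basis, reducing to an ordinary basis when $A$ is finite-dimensional. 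The genuinely delicate step is the sign bookkeeping of the third paragraph: pinning down the exact parity conditions on $\omega$ (and the characteristic-$2$ behaviour) under which $\cN^0(\cdot)$ is a cocycle and $\cN^1(\cdot)$ a coboundary, and excluding any coboundary relation linking atoms of different families so that supports genuinely stay disjoint; a secondary difficulty, present only when $A$ is neither homologically smooth nor proper, is the functional-analytic step (closedness of the coboundaries in the Banach space $C^{\bullet,d}_{\cP}(A)$) needed to upgrade the algebraic conclusion to the Schauder-basis statement.
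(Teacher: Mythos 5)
Your proposal is correct and follows essentially the same route as the paper's proof: reduction to atomic cocycles via the canonical decomposition, the classification of atoms by type $A$/$\tilde A$, the coboundary analysis producing the parity conditions on $\omega$, and the disjointness-of-neighbourhoods argument (with the $(\alpha,\alpha)$ family handled separately through \Cref{lem: linear dependencies between traces}) for linear independence. The only addition is your remark on closedness of coboundaries, which is not actually needed here since each fixed bidegree $(n,d)$ contains only finitely many basis elements.
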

\begin{proof}Below we will refer to the elements appearing in (1)-(4) as the \textit{postulated basis}. Suppose $c$ is a complete path or complete anti-path. By \Cref{lem: classification atomic cocycle type A rank 1}, either the cocycle $\cN^1(c)$ is not a coboundary and then $\cN^0(c)$ is a cocycle (and not a coboundary), or $\cN^1(c)$ is not a coboundary and $\cN^0(c)$ is not a cocycle. So either both $\cN^0(c)$ and $\cN^1(c)$ contribute non-trivial Hochschild classes or neither of them. The conditions on $\omega(p)$ and $\omega(q)$ and the characteristic of $\Bbbk$ are a consequence of this observation.  More generally by \Cref{lem: differentials of atomic cocycles of type tilde A} and \Cref{lem: classification atomic cocycle type A rank 1}, each element of the postulated basis is an atomic cocycle but not a coboundary as follows from \Cref{cor: 0 trace not coboundary} and \Cref{lem: classification atomic cocycle type A rank 1}. By \Cref{cor: differential determined by differential of atoms}, every cocycle in $C_{\cP}(A)$ is a sum of atomic ones and by \Cref{lem: atomic coycles are of type A or tilde A}, every atomic cocycle if of type $A$ or $\tilde{A}$. It therefore follows from \Cref{cor: trace elements type tilde A}, \Cref{lem: atomic cocycle type A coboundary or rank 1}, \Cref{lem: classification atomic cocycle type A rank 1} and \Cref{lem: linear dependencies between traces} that the elements from (1)-(4) generate $\HHH^{n,s}(A,A)$ as a vector space. It remains to show that they are linearly independent. Let $f_1, \dots, f_n \in C_{\cP}(A)$ be a sequence of pairwise distinct elements of the postulated basis. Then $f=\sum_{i=1}^n \lambda f_i$ is a cocycle and we note that each $\lambda_i f_i$ is necessarily an atom of $f$.
	
 Suppose for the remainder of the proof that $f$ is a coboundary. Then by \Cref{cor: behaviour of canonical decompositions under differential} we may assume that $f=d(u)$ for some atomic $u \in C_{\cP}(A)$. Consequently, $V(u)$ must be contained in the neighbourhood of the set of parents of the elements in $V(f)$. We observe that for any element $b$ of the postulated basis, the neighbourhood of the parents of $V(b)$ is empty for $b=\cN^0(p)$, $p \in \cP^c \cup \cQ^c$, and atomic in the other cases. If $b$ and $b'$ are distinct elements of the postulated basis, then either both are of the form $(\alpha, \alpha)$, $\alpha \in Q_1$, or, the neighbourhoods of $V(b)$ and $V(b')$ are disjoint as are the neighbourhoods of the sets of their parents. In the former case, $\{(x,x) \mid x \in Q_0\}$ is the neighbourhood of the parents of $V(b)$ and $V(b')$. Since $u$ is atomic and hence has an atomic neighbourhood,  we conclude that either $f$ is atomic or a linear combination of elements of the form $(\alpha, \alpha)$, $\alpha \in Q_1$. Because no atom of $f$ is a coboundary (being a multiple of a basis element and by the first part of this proof), we must be in the second case and the claim follows from \Cref{lem: linear dependencies between traces} and the subsequent discussion.
\end{proof}

\begin{proof}[{Proof of \Cref{Introthm: Hochschild cohomology}}]This follows from \Cref{thm: appendix Hochschild basis} by identifying a complete path or anti-path $p$ of period $m$ with the path $\gamma\colon S^1 \rightarrow \partial \Sigma_A$ which wraps $m$ times around the boundary component $B_p$ following its orientation. Likewise, we identify classes $(p, \para{p})$ and $s(s(v), v)$ ($v$ closed maximal path) with the elements $[B_{p^{-1} \para{p}}]$ and $[B_{vs(v)^{-1}}]$. Finally, any linear combination of classes of the form $(\alpha, \alpha)$, $\alpha \in Q_1$ is identified with an element of $\HH^1(\Sigma_A, \Bbbk)$ via the inverse of the restriction of the embedding $\HH^1(\Sigma_A, \Bbbk) \hookrightarrow \HHH^{1,0}(A,A)$ to its image, cf.~\Cref{lem: linear dependencies between traces}. This identifies the edges in the complement of a spanning graph of $Q$ with a basis of $\HH^1(\Sigma_A, \Bbbk)$. 
\end{proof}
\section{The cup product and the Gerstenhaber bracket}
\noindent Throughout the section, we fix a graded gentle quiver $(Q,I)$ and its associated graded gentle algebra $A$. We first describe the cup product on $\HHH^{\bullet}(A,A)$ and afterwards the Gerstenhaber bracket.

\subsection{Computation of the cup product}

\begin{lem}\label{lem: cup product complete paths and anti-paths}  Suppose $u$ is a primitive complete anti-path or complete path. Then for all $m, n \geq 1$, $$\cN^0(u^m) \cup \cN^0(u^n)=\pm\cN^0(u^{m+n})$$
	$$\cN^0(u^m) \cup \cN^1(u^n)=\pm\cN^1(u^{m+n}).$$
Moreover, if $\alpha \in Q_1$ appears as an arrow in $u$, then $(\alpha, \alpha) \cup \cN^0(u^m)=\pm \cN^1(u^m)$.	
\end{lem}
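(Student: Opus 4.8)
The plan is to prove all three identities by a term‑by‑term calculation with the chain‑level formula \eqref{eq: cup product} for the cup product on $C_{\cP}(A)$, using the closed expressions for $\cN^{0}$ from \Cref{cor: trace elements type tilde A} and for $\cN^{1}$ from \Cref{lem: classification atomic cocycle type A rank 1}. I would treat the case of a primitive complete anti-path $u=\gamma_{1}\cdots\gamma_{k}$ in detail; the complete-path case is entirely analogous, and can alternatively be deduced from it by Koszul duality, or by noting that $\cN^{0}$ of a complete path lies in $\HHH^{0,\bullet}(A,A)$, the centre of $A$, on which the cup product is just multiplication in $A$, so that the same combinatorics runs on genuine (permitted) paths.

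The combinatorial input I would record first is: since $u$ is complete, its arrows form a cyclic sequence $\gamma_{1},\dots,\gamma_{k}$ (indices mod $k$, $\gamma_{0}=\gamma_{k}$) with $\gamma_{i}\gamma_{i+1}$ a relation for every $i$, and the gentleness conditions (2) and (3) make the cyclic predecessor and successor of each arrow unique; hence primitivity of $u$ forces the $\gamma_{i}$ to be pairwise distinct, and for each $i$ the arrow $\gamma_{i+1}$ is the unique arrow $\beta$ with $\gamma_{i}\beta$ a relation and $t(\beta)=s(\gamma_{i})$. Consequently $u^{m}$ has period exactly $k$, one has $\cN^{0}(u^{m})=\sum_{i=0}^{k-1}\lambda_{i}^{(m)}(\rot^{i}(u^{m}),s(\rot^{i}(u^{m})))$ with the explicit signs $\lambda_{i}^{(m)}$ of \Cref{cor: trace elements type tilde A}, where $\rot^{i}(u^{m})$ is the length-$mk$ anti-path beginning with $\gamma_{i+1}$ and ending with $\gamma_{i}$, and $\cN^{1}(u^{n})=(\gamma_{k}u^{n},\gamma_{k})$.

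Now I would apply \eqref{eq: cup product} to each pair of summands. For $\cN^{0}(u^{m})\cup\cN^{0}(u^{n})$ the first component of the $(i,j)$-summand is the concatenation $\rot^{i}(u^{m})\cdot\rot^{j}(u^{n})$, which is a non-zero element of $C_{\cP}(A)$ only if its endpoints match and the gluing product $\gamma_{i}\gamma_{j+1}$ is again a relation; by the uniqueness recalled above this forces $j=i$, and then $\rot^{i}(u^{m})\cdot\rot^{i}(u^{n})=\rot^{i}(u^{m+n})$ while the idempotent second components multiply to $s(\rot^{i}(u^{m+n}))$. Thus $\cN^{0}(u^{m})\cup\cN^{0}(u^{n})$ is supported exactly on the vertices of $\cN^{0}(u^{m+n})$, and the sign check is immediate: the prefactor $(-1)^{(|q_{2}|-|p_{2}|)|p_{1}|}$ in \eqref{eq: cup product} equals $(-1)^{mn|u|}$ independently of $i$, while $\lambda_{i}^{(m)}\lambda_{i}^{(n)}=\lambda_{i}^{(m+n)}$ because the exponent defining $\cN^{0}$ is additive in the power; hence $\cN^{0}(u^{m})\cup\cN^{0}(u^{n})=\pm\cN^{0}(u^{m+n})$. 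The same vanishing analysis applied to $\cN^{0}(u^{m})\cup\cN^{1}(u^{n})$ and to $(\gamma_{c},\gamma_{c})\cup\cN^{0}(u^{m})$ (for $\gamma_{c}$ an arrow of $u$) leaves exactly one surviving summand, so the output is at once a scalar multiple of a single parallel pair; identifying that pair with $(\gamma_{k}u^{m+n},\gamma_{k})=\cN^{1}(u^{m+n})$ in the first instance, and with $(\gamma_{c}\cdot\rot^{c}(u^{m}),\gamma_{c})=\cN^{1}(\rot^{c}(u^{m}))$ in the second, and comparing prefactors, gives the stated equalities, where in the last case \Cref{lem: linear dependencies between traces} is used to pass from the rotated representative $\cN^{1}(\rot^{c}(u^{m}))$ to $\cN^{1}(u^{m})$ in cohomology, up to sign.

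The calculation is conceptually straightforward; the only real work is the bookkeeping of (i) the Koszul and structural signs, which I expect to collapse to a single global $\pm$ precisely because the cup prefactor above is $i$-independent and the $\cN^{0}$-signs are multiplicative in the power; and (ii) the degenerate instances of \eqref{eq: cup product}, in particular when a first component is a trivial path $e_{x}$ — this arises throughout the complete-path version and must be handled with the corresponding degenerate form of the cup product, the point being simply that trivial first components contribute idempotent factors that either vanish or are absorbed. I expect (i) to be the main, if routine, obstacle.
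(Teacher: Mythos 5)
Your argument is correct and follows essentially the same route as the paper's proof: a chain-level computation with \eqref{eq: cup product} in which gentleness forces all off-diagonal summands to vanish (mismatched endpoints or a concatenation that is no longer an anti-path), the additivity in $m$ of the exponents in the $\cN^0$-signs together with the $i$-independence of the cup prefactor yields the single global sign, and \Cref{lem: linear dependencies between traces} absorbs the rotation in the final identity, exactly as the paper does by rotating $u$ to begin with $\alpha$. One small terminological slip: for a complete \emph{anti}-path the consecutive products $\gamma_i\gamma_{i+1}$ lie \emph{outside} $I$, so they are not relations, but the uniqueness of the cyclic successor that your vanishing argument actually uses is supplied by gentleness condition (3) rather than (2), and the argument is unaffected.
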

\begin{proof}We only treat the case of complete anti-paths, the case of complete paths following by analogous arguments. We find that $(u^m, s(u)) \cup (u^n, s(u))=\pm(u^{m+n}, s(u))$ on the chain level. For $p=u^m$, $q=u^n$ and any integers $i,j \in \mathbb{Z}$ such that $i-j$ is not divisible by the period of $u$, we have that $(\rot^i(p),s(\rot^i(p))) \cup (\rot^j(q),s(\rot^j(q)))=0$ on the chain level unless $s(\rot^i(p))=s(\rot^j(q))$ in which case however, the resulting path $\rot^i(p)\rot^j(q)$ is not an anti-path which implies that $(\rot^i(p), s(\rot^i(p)) \cup (\rot^j(q), s(\rot^j(q))=0$ in $C_{\cP}(A)$. This proves the first claim and together with $(p, s(p)) \cup (q\alpha_1(q), \alpha_1)=(pq, \alpha_1(q))$, proves the second assertion. For the final claim, we may assume by rotating $u$ (and hence changing $\cN^0(u^m))$ up to sign) that $u$ starts with the arrow $\alpha$. One then finds,
\begin{equation}\label{eq: cup product torus}
(\alpha, \alpha) \cup \cN^0(u^m)= (-1)^{|u^m| \cdot |\alpha|}(\alpha u^m, \alpha)
\end{equation}
\end{proof}
\noindent From formula \eqref{eq: cup product torus} and its analogue for complete paths, we can conclude the following.
\begin{cor}\label{cor: cup product relation}
Let $p=\alpha_1 \cdots \alpha_l$ be a primitive complete path or anti-path and let $m \geq 1$ be such that $\cN^0(p^m)$ constitutes a non-trivial class in $\HHH^{\bullet}(A,A)$. Then $(\alpha_l, \alpha_l) \cup \cN^0(p^m)=(\alpha_1, \alpha_1) \cup \cN^0(\rot(p^m))$.
\end{cor}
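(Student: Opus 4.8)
The plan is to express each side, at the chain level, as an explicit signed multiple of an $\cN^1$-cocycle via \Cref{lem: cup product complete paths and anti-paths} — concretely formula \eqref{eq: cup product torus} and its complete-path analogue — and then to identify the two resulting classes, using the fact contained in \Cref{lem: linear dependencies between traces} that $\cN^1$ of a complete (anti-)path depends only on its rotation class up to a scalar in cohomology. Throughout, the complete-path case is identical to the complete-anti-path case after replacing \eqref{eq: cup product torus} and the anti-path parts of \Cref{lem: differentials of atomic cocycles of type tilde A} and \Cref{lem: linear dependencies between traces} by their complete-path counterparts, so I will only describe the anti-path case.

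First, since $p=\alpha_1\cdots\alpha_l$ is primitive one has $\rot(p^m)=\rot(p)^m$, and $\rot(p)$ is again a primitive complete anti-path, whose last arrow is $\alpha_1$. When $(\alpha_i,\alpha_i)$ is cupped on the left with $\cN^0(w)$, gentleness forces all but one summand of $\cN^0(w)$ to be annihilated: prepending $\alpha_i$ to a rotation of $w$ other than the one whose leading arrow follows $\alpha_i$ in the cyclic word of $w$ either produces a relation at the junction or fails to be composable. Applying \eqref{eq: cup product} to the single surviving summand and using the definition of $\cN^1$, the left-hand side becomes an explicit Koszul sign times $\cN^1(p^m)$ and the right-hand side an explicit Koszul sign times $\cN^1(\rot(p^m))$ — these being precisely the signs occurring in \eqref{eq: cup product torus}. (Equivalently, one may apply the last assertion of \Cref{lem: cup product complete paths and anti-paths} twice and keep track of the signs there.)

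Next, as in the proof of \Cref{lem: linear dependencies between traces}, the Bardzell differential of the cochain $(p^m,e_{s(p^m)})$ is a signed sum of $\cN^1(p^m)$ and $\cN^1(\rot(p^m))$; since coboundaries vanish in $\HHH^\bullet(A,A)$, this writes $\cN^1(\rot(p^m))$ as an explicit sign times $\cN^1(p^m)$ in cohomology, the sign being read off from \eqref{eq: Bardzell differential}. Substituting this into the outcome of the previous paragraph reduces the claim to a single congruence modulo $2$ among $m$, $l$, $|p|$, $|\alpha_1|$ and $|\alpha_l|$. This is where the hypothesis enters: since $\cN^0(p^m)$ represents a non-trivial class it is in particular a cocycle, so by \Cref{lem: differentials of atomic cocycles of type tilde A} the number $\omega(p^m)=m\,\omega(p)$ must be even — unless $\operatorname{char}\Bbbk=2$, in which case all signs are trivial and there is nothing to prove — i.e.\ $l(p^m)\equiv|p^m|\pmod 2$, which is exactly the parity relation needed to collapse the congruence.

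The only genuine difficulty is the sign bookkeeping. Three unrelated sources of signs — the Koszul signs in the cup product \eqref{eq: cup product}, the signs in the explicit formula for $\cN^0$ in \Cref{cor: trace elements type tilde A}, and the signs in the Bardzell differential \eqref{eq: Bardzell differential} — have to be carried through simultaneously, and one must verify that the parity condition $\omega(p^m)\equiv0\pmod 2$ coming from non-triviality is exactly the one that makes them cancel.
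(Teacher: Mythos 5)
Your overall route is the same as the paper's (the proof there is a one-line appeal to \eqref{eq: cup product torus} and its complete-path analogue), and the first two steps are fine: both sides do reduce, via the surviving single summand of $\cN^0$, to $(-1)^{|p^m||\alpha_l|}\cN^1(p^m)$ and $(-1)^{|p^m||\alpha_1|}\cN^1(\rot(p^m))$ respectively, and $d\big((p^m,e_{s(p^m)})\big)$ does relate the two $\cN^1$-classes. The gap is in the final step, which is exactly the one you flag as ``the only genuine difficulty'' and then wave through: the parity relation $l(p^m)\equiv|p^m| \pmod 2$ coming from non-triviality of $\cN^0(p^m)$ does \emph{not} collapse the residual congruence. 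Working out the signs, the coboundary gives $\cN^1(\rot(p^m))=(-1)^{|p^m||\alpha_l|+l(p^m)}\cN^1(p^m)$, so the two sides differ by $(-1)^{|p^m||\alpha_1|+l(p^m)}$, which after substituting $l(p^m)\equiv|p^m|$ becomes $(-1)^{|p^m|(|\alpha_1|+1)}$ --- and this still depends on $|\alpha_1|$ and $|p^m|$. A concrete test: the $3$-cycle quiver with $I=0$, $|\alpha_1|=|\alpha_2|=0$, $|\alpha_3|=1$, $m=1$, so $\omega(p)=2$ and $\cN^1(p)\neq 0$; there one computes $(\alpha_3,\alpha_3)\cup\cN^0(p)=-\cN^1(p)$ while $(\alpha_1,\alpha_1)\cup\cN^0(\rot(p))=+\cN^1(p)$.

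The residual sign $(-1)^{l(p^m)+|p^m||\alpha_1|}$ is precisely the sign by which $\cN^0(\rot(p^m))$ differs from $\cN^0(p^m)$ (cf.~\Cref{lem: linear dependencies between traces}), so what your computation actually proves --- and what \Cref{thm: cup product} genuinely needs for the relation $x_e\cdot x_B=x_f\cdot x_B$ --- is the unrotated identity $(\alpha_l,\alpha_l)\cup\cN^0(p^m)=(\alpha_1,\alpha_1)\cup\cN^0(p^m)$ (more generally, $(\alpha,\alpha)\cup\cN^0(p^m)$ is independent of which arrow $\alpha$ of $p$ is chosen). So either the statement should be read with the same $\cN^0(p^m)$ on both sides, or the rotation sign must be inserted; in either case your proposal cannot be completed as written, because the decisive sign verification is asserted rather than performed and comes out the other way.
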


\begin{lem}\label{lem: cup product orthogonal}
Let $b$ and $b'$ be elements of the basis from \Cref{thm: appendix Hochschild basis} of Hochschild cohomology, different from the unit. Then, unless $(b,b')$ is one of the cases from \Cref{lem: cup product complete paths and anti-paths}, one has $b \cup b'=0$.
\end{lem}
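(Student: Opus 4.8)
The plan is to carry out every product on the chain level of the Bardzell complex $C_{\cP}(A)$ and then reduce to the classification of atomic cocycles. By graded-commutativity of the cup product it suffices to treat each unordered pair $\{b,b'\}$ once. By the chain-level formula \eqref{eq: cup product}, for two basis terms $(p_1,q_1)$ of $b$ and $(p_2,q_2)$ of $b'$ one has $(p_1,q_1)\cup(p_2,q_2)=\pm(p_1p_2,\,q_1q_2)$, and this element of $C_{\cP}(A)$ is zero unless both concatenations $p_1p_2$ and $q_1q_2$ are legal, i.e.\ composable and with the junction letter sitting correctly with respect to the relations in each of the two slots. Now each basis element of \Cref{thm: appendix Hochschild basis} is, up to a global sign, a sum over a rotation class of terms $(p,q)$ whose two slots have one of only a handful of shapes --- a trivial path, a single arrow, a maximal (anti-)path, or a complete cyclic (anti-)path --- so the argument reduces to a finite inspection of which pairs of slot-shapes can be concatenated. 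The relevant gentleness facts are: a maximal (anti-)path admits no legal extension at either end; a complete cyclic (anti-)path $p$ can be legally extended only by continuing around $p$; and at every vertex each arrow has at most one continuation "through a relation" and at most one "avoiding relations". I would also note at the start that no cancellation can hide a nonzero product: within a single $b\cup b'$ the slot-lengths are constant on each factor, so distinct legal term-products give distinct basis elements of $C_{\cP}(A)$.

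The case analysis then splits into three outcomes. For the large majority of pairs the chain-level product vanishes identically: this is forced whenever a term of $b$ carries a maximal (anti-)path in the slot along which it would have to be concatenated with a slot of $b'$ --- so, for instance, $(u,u_{\circ})$ and $(s(u),u)$ can only survive when cupped with a factor having a trivial first (resp.\ second) slot --- and, more generally, whenever gentleness forces the junction letter into (or out of) $I$ the wrong way; as a typical example, $(\alpha,\alpha)\cup\cN^0(p)$ is already zero in $C_{\cP}(A)$ unless $\alpha$ is one of the arrows of the complete cyclic path $p$. The second outcome is the excluded one: the pairs for which a term-product genuinely survives and represents a non-zero class are exactly those of \Cref{lem: cup product complete paths and anti-paths}, namely $\cN^0(u^m)\cup\cN^0(u^n)$, $\cN^0(u^m)\cup\cN^1(u^n)$ and $(\alpha,\alpha)\cup\cN^0(u^m)$ with $\alpha$ an arrow of $u$.

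The third outcome is the heart of the matter. For a pair $\{b,b'\}$ not on that list, if the chain-level product $b\cup b'$ is non-zero then it is a (possibly infinite) cocycle whose support consists of rank-one terms $(p_1p_2,q_1q_2)$ all sitting at a single level of the graph $G$; hence its canonical decomposition consists of connected levelled pieces, which by \Cref{lem: atomic coycles are of type A or tilde A} are atomic cocycles of type $A$ or $\tilde A$. A type-$\tilde A$ atom would, by \Cref{cor: trace elements type tilde A}, be a scalar multiple of some $\cN^0(-)$ and hence have one slot trivial throughout; one rules this out directly in these cases (the two slots of every term of $b\cup b'$ here are both non-trivial, or one is already accounted for in the second outcome). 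A type-$A$ atom is, by \Cref{lem: atomic cocycle type A coboundary or rank 1}, either a rank-two coboundary or a rank-one cocycle, and in the latter case \Cref{lem: classification atomic cocycle type A rank 1} says it is a coboundary unless it has one of five explicit shapes. So it remains to verify, for each near-miss pair, that none of the rank-one atoms of $b\cup b'$ is of the shape $\cN^1(\text{anti-path})$, $\cN^1(\text{path})$, $(u,u_{\circ})$, $(s(u),u)$ or $(\alpha,\alpha)$ --- which is forced by the slot lengths together with the fact that the concatenated word either contains a relation inside a slot required to be relation-free, or fails to be complete/maximal. Once this is checked, every atom of $b\cup b'$ is a coboundary, so $b\cup b'=0$ in $\HHH^{\bullet}(A,A)$; \Cref{cor: relations in cohomology between atomic cocycles} is convenient here for organising the atoms of the product into groups that are honest coboundaries.

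The hard part is entirely the bookkeeping of this third step: enumerating the slot-shapes of the basis families of \Cref{thm: appendix Hochschild basis} and, for each of the finitely many near-miss pairs, tracing the gentleness constraints carefully enough to see that the surviving rank-one atoms avoid all five non-coboundary shapes. The basis elements with a single-arrow slot --- the classes $(\alpha,\alpha)$ and the $\cN^1$'s --- need the most attention, since a priori their short slots can be concatenated with many things; confirming that the outcome is always either illegal, or a coboundary, or (in the excluded cases) exactly the $\cN^s$ predicted by \Cref{lem: cup product complete paths and anti-paths}, is the crux.
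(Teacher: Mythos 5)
Your proposal is correct and follows essentially the same route as the paper: reduce to the chain-level concatenation formula \eqref{eq: cup product}, observe that for almost all pairs of basis elements gentleness (maximality of $(u,u_\circ)$ and $(s(u),u)$, distinctness of rotation classes, non-membership of $\alpha$ in $u$) kills the product already in $C_{\cP}(A)$, and show that the few surviving products are coboundaries. The only difference is organisational: where the paper disposes of the residual nonzero chain-level products (e.g.\ $(u^m,v^n)$ for $u$ a complete anti-path and $v$ a complete path) by a direct ad hoc argument, you route them through the classification of atomic cocycles (\Cref{lem: atomic coycles are of type A or tilde A}, \Cref{lem: atomic cocycle type A coboundary or rank 1}, \Cref{lem: classification atomic cocycle type A rank 1}), which is a slightly more systematic way of reaching the same conclusion.
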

\begin{proof}
Suppose $b=\cN^i(u^m)$ and $b'=\cN^j(v^n)$, where $i, j \in \{0,1\}$ where $u=\alpha_1 \cdots \alpha_r$ and $v=\beta_1 \cdots \beta_l$  are both complete primitive anti-paths or both complete primitive paths but $u$ and $v$ belong to distinct rotation classes. Then $\rot^r(u^m) \rot^s(v^n)$ is never an anti-path if $u$ and $v$ are anti-paths and never paths of $A$ if $u$ and $v$ are paths of $A$. Otherwise it would be necessarily complete which would imply that $u$ and $v$ belong to the same rotation class. Thus, $b \cup b'=0$ on the chain level. Likewise, if $p$ is a maximal anti-path, then for any non-trivial anti-path $p'$, neither $pp'$ nor $p' p$ can be an anti-path due to maximality. Likewise, $\para{p}$ is a maximal path of $A$ in this case and so that $\para{p} u \in I$ whenever $u$ is a complete path of $A$. It follows that the chain level cup product of $b$ and $(p, \para{p})$ is zero if $u$ is a complete path or a complete anti-path. 

Next, if $u$ is a complete anti-path and $v$ is a complete path instead so that $(u^m, s(u^m)) \cup (s(v), s(v^n))=\pm (u^m, v^n)$ is non-zero, then $s(u)=s(v)$ and moreover, we necessarily have $\alpha_1=\beta_1$ and $\alpha_r=\beta_s$, from which we conclude that $(u^m, v^n)$ is a coboundary.    
If $p$ and $p'$ are two maximal anti-paths, then $pp' \in I$ and hence $(p, \para{p}) \cup (p', \para{p'})=0$ on the chain level. If $b=(\alpha, \alpha)$ but $\alpha$ does not appear in a complete path or anti-path $u$ and $b'$ is of the form $\cN^s(u)$, it follows $b \cup b'=0$.
\end{proof}

\noindent The following is a generalisation of \cite[Theorem 5.8]{ChaparroSchrollSolotarSuarezAlvarez} to the graded case.
\begin{thm}\label{thm: cup product}
Let $A$ be a graded gentle algebra. Let $\cH$ denote the free graded commutative algebra generated the following elements:
\begin{itemize}
	\item edges $x_e \in Q_1 \setminus T_1$ with $|x_e|=1$, where $T$ is a fixed spanning tree of $Q$;
	\item $x_B$, where $B$ is a fully marked or unmarked boundary component of $\Sigma_A$ and 
	\begin{displaymath}
	|x_B| = \begin{cases}\omega(B), & \text{if $\omega(B) \cdot \operatorname{char}\Bbbk$ is even;} \\ 2\omega(B), & \text{otherwise.} \end{cases}
	\end{displaymath}
	\item $y_C$, where $C$ is a boundary component of $\Sigma_A$ with a single stop and $|y_C|=\omega(C)$.
\end{itemize}
\noindent Then, there is an isomorphism of graded algebras
\begin{displaymath}
\HHH^{\bullet}(A,A) \cong \cH / \mathcal{J}, 
\end{displaymath}
\noindent where $\mathcal{J}$ is the graded ideal generated by all expressions of the following form:
\begin{itemize}
	\item $x_B \cdot x_{B'}$, $B \neq B'$;
	\item $x_B \cdot y_C$; 
	\item $y_C^2$;
	\item $x_e \cdot x_B- x_{f} \cdot x_B$, whenever $e$ and $f$ both appear in the complete path or anti-path corresponding to $B$. 
\end{itemize}
\end{thm}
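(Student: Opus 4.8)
The plan is to build an explicit graded-algebra homomorphism $\Phi\colon\cH\to\HHH^{\bullet}(A,A)$, show it factors through $\cH/\mathcal{J}$, and then prove the induced map is bijective by matching bases.

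Since $\cH$ is free graded commutative, it is enough to prescribe $\Phi$ on generators. Put $\Phi(x_{e})=(e,e)$ for $e\in Q_{1}\setminus T_{1}$; put $\Phi(y_{C})=(u,u_{\circ})$ if the single-stop boundary component $C$ comes from a maximal anti-path $u$, and $\Phi(y_{C})=(s(u),u)$ if it comes from a closed maximal path $u$; and, writing $p_{B}$ for the primitive complete path or anti-path representing a fully marked or unmarked boundary component $B$, put $\Phi(x_{B})=\cN^{0}(p_{B})$ when $\omega(B)\cdot\operatorname{char}\Bbbk$ is even and $\Phi(x_{B})=\cN^{0}(p_{B}^{2})$ otherwise. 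Using \Cref{thm: appendix Hochschild basis} for the cohomological and internal degrees of these classes — and \Cref{lem: differentials of atomic cocycles of type tilde A} for the fact that in the odd case $\cN^{0}(p_{B})$ fails to be a cocycle while $\cN^{0}(p_{B}^{2})$ is one, its winding number being $2\omega(B)$ — one checks that this prescription preserves degrees, hence extends uniquely to a graded-algebra homomorphism $\Phi$.

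Next one verifies $\Phi(\mathcal{J})=0$, which is precisely where the cup-product computations enter. By \Cref{lem: cup product orthogonal}, the cup product of any two of the chosen generators vanishes unless it is one of the configurations of \Cref{lem: cup product complete paths and anti-paths}; this kills at once the generators $x_{B}x_{B'}$ ($B\neq B'$), $x_{B}y_{C}$ and $y_{C}^{2}$ (two maximal anti-paths always compose to a path lying in $I$). For the relation $x_{e}x_{B}-x_{f}x_{B}$, \Cref{cor: cup product relation} together with the sign identity $\cN^{0}(\rot(p))=\pm\cN^{0}(p)$ of \Cref{lem: linear dependencies between traces} gives $(e,e)\cup\cN^{0}(p_{B}^{m})=\pm(f,f)\cup\cN^{0}(p_{B}^{m})$ whenever $e$ and $f$ both occur in $p_{B}$; after replacing each $x_{e}$ by a suitable sign multiple of $(e,e)$ so that these signs become $+1$ simultaneously for all $B$ (possible because the signs in \eqref{eq: cup product torus} depend only on the chosen rotation representatives), $\Phi$ annihilates this relation as well and descends to $\overline\Phi\colon\cH/\mathcal{J}\to\HHH^{\bullet}(A,A)$. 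Surjectivity of $\overline\Phi$ is then immediate from \Cref{thm: appendix Hochschild basis}: the unit, the classes $(e,e)$, and the classes $(u,u_{\circ})$, $(s(u),u)$ are the images of $1$, the $x_{e}$ and the $y_{C}$; a class $\cN^{0}(p_{B}^{m})$ is $\overline\Phi(x_{B}^{m})$ by \Cref{lem: cup product complete paths and anti-paths}, and $\cN^{1}(p_{B}^{m})=\overline\Phi(x_{e}x_{B}^{m})$ for any arrow $e$ occurring in $p_{B}$ — such an $e\notin T_{1}$ always exists, since a complete path or anti-path is a genuine non-backtracking closed walk in $Q$ and so cannot be contained in a spanning tree.

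The remaining and principal point is injectivity: I would establish a normal form for monomials of $\cH/\mathcal{J}$, using graded-commutativity, the vanishing relations, and $x_{e}x_{B}=x_{f}x_{B}$ to reduce every monomial to one of the shapes $1$, a product of distinct $x_{e}$'s, such a product times a single power $x_{B}^{m}$, or a single $y_{C}$, and then compare these surviving monomials degree by degree with the basis of \Cref{thm: appendix Hochschild basis} via the reindexing from the proof of \Cref{Introthm: Hochschild cohomology}. The hardest part is exactly this comparison: one must argue that the listed relations are the right ones — enough that $\overline\Phi$ is defined, but not so many that a surviving monomial is killed — and, in the proper-but-not-homologically-smooth case, deal with the Schauder-basis subtleties, since a single boundary component $B$ already contributes the infinite family $x_{B}^{m}$ ($m\geq 1$) and the identification must be compatible with the completed (infinite-sum) structure on both sides. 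A secondary, purely bookkeeping, obstacle is the sign normalisation of the $x_{e}$ used in the previous step, which has to be checked to be globally consistent.
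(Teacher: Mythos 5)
Your proposal follows essentially the same route as the paper: the same identification of the generators with $(e,e)$, with $\cN^{0}(p_{B})$ or $\cN^{0}(p_{B}^{2})$ according to the parity condition, and with $(u,u_{\circ})$ resp.\ $(s(u),u)$, and the same appeal to \Cref{lem: cup product complete paths and anti-paths}, \Cref{cor: cup product relation} and \Cref{lem: cup product orthogonal} to obtain a well-defined surjection $\cH/\mathcal{J}\to\HHH^{\bullet}(A,A)$. For injectivity the paper simply defers to \cite[Theorem 5.8]{ChaparroSchrollSolotarSuarezAlvarez}, which is exactly the monomial normal-form comparison against the basis of \Cref{thm: appendix Hochschild basis} that you sketch (including the Schauder-basis caveat in the non-smooth proper case).
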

\begin{proof}Let $u$ be a primitive path or anti-path. Then one identifies $x_{B_u}$ with $\cN^0(u)$ if $\omega(u) \cdot \operatorname{char} \Bbbk$ is even and with $\cN^0(u^2)$ otherwise. Likewise $y_C$ is identified with the class $(p, \para{p})$ or $(s(p), p)$ of the corresponding maximal anti-path or closed maximal path $p$ associated with $C$. Finally, $(\alpha, \alpha)$, $\alpha \in Q_1 \setminus T_1$ is identified with $x_{\alpha}$. Due to the degree conditions on $\omega(u)$ in relation with the characteristic of the field, \Cref{lem: cup product complete paths and anti-paths}, \Cref{cor: cup product relation} and \Cref{lem: cup product orthogonal} imply that there exists a surjective morphism $\cH/\mathcal{J} \rightarrow \HHH^{\bullet}(A,A)$. Then one proceeds as in \cite[Theorem 5.8]{ChaparroSchrollSolotarSuarezAlvarez} to see that this morphism is in fact an isomorphism.
\end{proof}

\subsection{Computation of the Gerstenhaber bracket} 
We apply the formulas \eqref{eq: composition product 1}-\eqref{eq: composition product 3} to compute the Gerstenhaber bracket.  As previously, we endow a product $\prod_{i \geq 1}V_i$ of finite-dimensional graded vector spaces $V_i$ as a complete topological vector space so that the decreasing sequence of subsets $\{\prod_{i \geq j} V_i\}_{j \geq 1}$ is a basis of neighbourhoods around $0$.
\begin{definition}Let $d \in \mathbb{Z}$ and assume that $d \cdot \operatorname{char} \Bbbk$ is even. Define the (completed) \textbf{$d$-graded Witt algebra}
	\begin{equation}\label{eq: graded Witt algebra}
		\widehat{\cW}^+_d \coloneqq \prod_{i \geq 1} \Bbbk[-i \cdot d] \cdot e_i,
	\end{equation}
	\noindent considered as product in the category of graded vector spaces, as the graded Lie algebra with the unique continuous graded Lie bracket such that $[e_i, e_j]=(i-j)e_{i+j}$. We define $\cW^+_d \subseteq \widehat{\cW}^+_d$ as the Lie subalgebra formed by the direct sum of the elements $e_i$ in the category of graded vector spaces.
\end{definition}
\noindent By definition, $\cW^+_0$ is the ordinary (non-complete) Witt algebra and $\widehat{\cW}^+_d$ is concentrated in degrees $d, 2d, 3d, \dots$.  We will see below that elements of the form $\{\cN^1(u^m)\}_{m \geq 1}$ associated to a complete (anti-)path form a copy of a graded Witt algebra which is analogous to the examples discussed in \cite{RedondoRoman}.
\begin{rem} The fact that the product in \eqref{eq: graded Witt algebra} is one of graded vector space means that $\widehat{\cW}^+_d=\cW^+_d$ whenever $d \neq 0$. That we still make this distinction has a deeper origin in our context and is connected to the fact that proper graded gentle algebras are \textit{derived complete}, cf.~\cite{BoothGoodbodyOpper}. It is a generalisation of the fact that the graded polynomial ring $\Bbbk[x]$ with $|x| \in \mathbb{Z}$ is complete as a graded ring with respect to the $(x)$-adic topology if and only if $|x| \neq 0$, while for $|x|=0$, its graded completion is the usual ring  of power series. Graded completion agrees in this case with the derived completion at the simple module $\Bbbk[x]/(x)$.
\end{rem}
\begin{rem}
	The assumption that $d$ or $\operatorname{char} \Bbbk$ are even is necessary to ensure the graded anti-symmetry of the Lie bracket on $\widehat{\cW}^+_d$.
\end{rem}
\noindent The Lie algebras $\cW_d^+$ and $\hat{\cW}_d^+$ admit semi-direct extensions $\cU_d$ and $\hat{\cU}_d^+$. The underlying graded vector space of the latter is
	\begin{equation}\label{eq: graded Witt algebra}
	\widehat{\cU}_d \coloneqq \prod_{i \geq 1} \Bbbk[-i \cdot d] \cdot e_i \times \prod_{i \geq 1} \Bbbk[-i \cdot d+1] \cdot f_i,
\end{equation}
\noindent where $[e_i, e_j]=(i-j)e_{i+j}$ and $[f_i, f_j]=0$ and $[f_i, e_j]=i f_{i+j}$.  As in the case of Witt algebras, $\cU_d$ is defined as the subspace of $\hat{\cU}_d$ obtained by replacing products by direct sums. We note that $\cU_0$ is isomorphic to $\HHH^{\bullet+1}(\Bbbk[x], \Bbbk[x])$, $|x|=0$, as a graded Lie algebra, while $\hat{\cU}_0$ is isomorphic to $\HHH^{\bullet+1}(\Bbbk[t]/(t^2), \Bbbk[t]/(t^2)) \cong \HHH^{\bullet+1}(\Bbbk\llbracket x \rrbracket, \Bbbk\llbracket x \rrbracket)$, where $|t|=1$.

\begin{lem}\label{lem: Lie bracket complete paths and anti-paths}  Let $u$ be a primitive complete anti-path or complete path. Then for all $m, n \geq 1$,
		\begin{displaymath}
			\big[ \cN^1(u^m), \cN^1(u^n)\big] = \begin{cases}(m-n) \cdot \cN^1(u^{m+n}), & \text{if $\omega(u^m)$ and $\omega(u^n)$ are even;} \\ m \cdot \cN^1(u^{m+n}), & \text{if $\omega(u^m)$ is odd and $\omega(u^n)$ is even;} \\ 0, & \text{if $\omega(u^m)$ and $\omega(u^n)$ are odd;} \end{cases}
		\end{displaymath}
		\begin{displaymath}
				[\cN^0(u^m), \cN^1(u^n)] = {\begin{cases}m \cdot  (u^{m+n}, s(u)) & \text{if $\omega(u^n)$ is even;} \\ 0 & \text{if $\omega(u^n)$ is odd and $\omega(u^m)$ is even} \\ (-1)^{m l(u)} \cdot (u^{m+n}, s(u))  &  \text{otherwise.}  \end{cases}}
		\end{displaymath}
	\noindent 	Moreover, if $h \in \HH^1(A,A)$ corresponds to the image of an element $f \in \HH^1(\Sigma_A, \Bbbk)$ and $B_u$ denotes the unmarked or fully marked boundary component of $\Sigma_A$ corresponding to $u$, then $$[h, \cN^1(u^m)]=f(B_u) \cdot \cN^1(u^m).$$ In particular, if $h=(\alpha, \alpha)$ for some $\alpha \in Q_1$, then $f(B_u)$ equals the number of appearances of $\alpha$ in $u^m$. Likewise, if $p$ is a maximal anti-path such that $\para{p}\neq \emptyset$, then $[h, (p, \para{p})]= f(B_p) (p, \para{p})$ and if $q$ is a closed maximal path of $A$, then $[h, (s(q), q)]=f(B_q) (s(q), q)$.
\end{lem}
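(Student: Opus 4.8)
The whole statement reduces to a sequence of direct computations in the complex $C_{\cP}(A)$, carried out with the explicit formulas \eqref{eq: composition product 1}--\eqref{eq: composition product 4} for the operations $\circ_i$ out of which $[-,-]$ is assembled. It is enough to treat the case where $u=\alpha_1\cdots\alpha_l$ is a primitive complete anti-path; the case of a complete path $q$ is entirely analogous (now $q$ sits in the second component of the relevant pairs). The combinatorial input I would isolate first is that along an anti-path of a gentle algebra each arrow has a unique successor and a unique predecessor, so that a primitive complete anti-path --- and likewise any non-complete maximal anti-path --- has pairwise distinct arrows. In particular the last letter $\alpha_l$ of $u^m$ occurs exactly $m+1$ times in the word $\alpha_l u^m$ (once at each end, $m-1$ times strictly inside), and a fixed arrow of $u$ occurs exactly $m$ times in $u^m$. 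These occurrence counts decide which $\circ_i$-terms survive.

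For $[\cN^1(u^m),\cN^1(u^n)]$ I would use $\cN^1(u^k)=(\alpha_l u^k,\alpha_l)$. Because the second components have length one, the only nonzero $\circ_i$ come from \eqref{eq: composition product 1} (with the low-length instances of \eqref{eq: composition product 2}--\eqref{eq: composition product 3} handled separately): each amounts to substituting $\alpha_l u^n$ for an occurrence of $\alpha_l$ in $\alpha_l u^m$, and, after recombining the periodic words, yields $\pm\cN^1(u^{m+n})$. Collecting the signs in \eqref{eq: composition product 1} and in the definition of $\circ$ gives $\cN^1(u^m)\circ\cN^1(u^n)=c(m,n)\cN^1(u^{m+n})$ for an explicit $c(m,n)$ whose value depends on $m$ and on the parity of $\omega(u^n)=n\omega(u)$. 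Since the shifted total degrees of $\cN^1(u^m)$ and $\cN^1(u^n)$ are $\omega(u^m)+1$ and $\omega(u^n)+1$, the Gerstenhaber bracket is $\cN^1(u^m)\circ\cN^1(u^n)-(-1)^{\omega(u^m)\omega(u^n)}\cN^1(u^n)\circ\cN^1(u^m)$, and substituting $c(m,n),c(n,m)$ produces the three displayed cases. The computation of $[\cN^0(u^m),\cN^1(u^n)]$ is parallel: $\cN^0(u^m)$ is the signed sum over the $l$ rotations of $u^m$, one evaluates both composite products, and the signs built into the definition of $\cN^0(u^m)$ are precisely those needed so that all surviving contributions collapse onto the single pair $(u^{m+n},s(u))$ with total coefficient $m$ (respectively $(-1)^{m\,l(u)}$), the parity conditions again entering through the signs in the $\circ_i$-formulas.

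For the brackets with $h\in\HH^1(A,A)$, \Cref{lem: linear dependencies between traces} reduces us to $h=(\alpha,\alpha)$ with $\alpha\in Q_1$ and then extends by $\Bbbk$-linearity. For $v\in\{\cN^1(u^m),(p,\para p),(s(q),q)\}$ --- each of which has a component equal to a single word in which $\alpha$ appears with controlled multiplicity --- one computes $h\circ v$ from \eqref{eq: composition product 4} (the first component of $h$ being a single arrow) and $v\circ h$ from \eqref{eq: composition product 1} (plus the low-length cases); both turn out to be scalar multiples of $v$, the scalars counting occurrences of $\alpha$ in the relevant word(s). The commutator's net scalar is then the coefficient of $\alpha$ in the $1$-cycle of $|Q|$ representing the boundary component $B$ under $\HH_1(|Q|,\Bbbk)\cong\HH_1(\Sigma_A,\Bbbk)$ --- this cycle being $\sum_{\alpha\ \text{in}\ u^m}[\alpha]$ for $B=B_u$, and correspondingly for $B_p$ and $B_q$ --- which under $\HH^1(\Sigma_A,\Bbbk)\cong\Hom_{\Bbbk}(\HH_1(\Sigma_A,\Bbbk),\Bbbk)$ is exactly $f(B)$; for $h=(\alpha,\alpha)$ and $v=\cN^1(u^m)$ this is the number of appearances of $\alpha$ in $u^m$.

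The part I expect to be most delicate is the sign and coefficient bookkeeping in the $\cN^1$- and $\cN^0$-computations: one has to verify that the parities of $\omega(u^m)$ and $\omega(u^n)$ intervene at exactly the three places claimed (so that the structure constant $m-n$ degenerates first to $m$ and then to $0$), and, in the $\cN^0$-case, that the a priori many terms really telescope onto the single basis element $(u^{m+n},s(u))$ rather than a general combination of its rotations; the treatment of the endpoint arrows of $u^m$, $p$ and $\para p$ against the surface-model description of $\HH_1(\Sigma_A,\Bbbk)$ enters here as well. The remaining pieces --- the $\HH^1$-brackets and the degenerate low-length instances of \eqref{eq: composition product 1}--\eqref{eq: composition product 4} --- are a routine though laborious unwinding of the formulas.
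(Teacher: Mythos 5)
Your proposal follows essentially the same route as the paper's proof: a direct chain-level computation with the induced $\circ_i$ operations on $C_{\cP}(A)$, reducing each composition to counting the $m+1$ occurrences of the terminal arrow in $\alpha_l u^m$ (the paper's sum $\sum_{j=0}^{m}$), tracking signs through the parities of $\omega(u^m)$ and $\omega(u^n)$, recovering the bracket from the graded commutator, and handling the $\HH^1$-part by reducing to $h=(\alpha,\alpha)$ via \Cref{lem: linear dependencies between traces} and linearity (the paper obtains that case as the $m=0$ instance of the first computation). The plan is correct; it simply leaves the explicit evaluation of the structure constants $c(m,n)$ to the same routine sign bookkeeping that the paper carries out in full.
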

\begin{proof}We treat the case of complete anti-paths. The case of complete paths is similar. Let $p=\alpha_1 \cdots \alpha_l$ be a primitive complete anti-path of length $l$, $m \geq 0$ and $n \geq 1$.  Then applying our formulas and using $|p^n|=|p| n$ and $|p|^2\equiv |p| \mod 2$, we find that
	\begin{displaymath}
	\begin{aligned}
(\alpha_l p^m, \alpha_l) \circ (\alpha_l p^n, \alpha_l) & = (-1)^{(l(\alpha_lp^m)-1) (l(\alpha_lp^n)+|p^n|-1)} \sum_{j=0}^{m} (-1)^{j (-|p|^{n+
	1}+l(\alpha_lp^n)-1)} (\alpha_l p^{m+n}, \alpha_l) \\ & = {(-1)^{m\cdot  l(p) \cdot \omega(p^n)} \sum_{j=0}^{m} (-1)^{j \omega(p^n)} (\alpha_l p^{m+n}, \alpha_l)} \\ & = {\begin{cases}(m+1) (\alpha_l p^{m+n}, \alpha_l) & \text{if $\omega(p^n)$ is even}; \\ (\alpha_lp^{m+n}, \alpha_l) & \text{if $\omega(p^n)$ is odd and $m$ is even;} \\ 0, & \text{otherwise.} \end{cases}}
\end{aligned} 
	\end{displaymath}
\noindent Note that if $\omega(p^n)=n \omega(p)$ is odd, then so are $n$ and $\omega(p)$ and hence $m$ is even if and only if $\omega(p^m)$ is even. Assuming $m \geq 1$, we conclude that
\begin{displaymath}
\big[ \cN^1(p^m), \cN^1(p^n)\big] = \begin{cases}(m-n) \cdot \cN^1(p^{m+n}), & \text{if $\omega(p^m)$ and $\omega(p^n)$ are even;} \\ m \cdot \cN^1(p^{m+n}), & \text{if $\omega(p^m)$ is odd and $\omega(p^n)$ is even;} \\ 0, & \text{if $\omega(p^m)$ and $\omega(p^n)$ are odd;} \end{cases}
\end{displaymath}
\noindent The value of the bracket in the case when $\omega(p^m)$ is even and $\omega(p^n)$ is odd follows from anti-symmetry. The special case $m=0$, shows \begin{displaymath}
[(\alpha_l, \alpha_l), \cN^1(p^n)]=-(-1)^{0 \cdot \omega(p^n)}[\cN^1(p^n), (\alpha_l, \alpha_l)]=n\cN^{1}(p^n)=f_{\alpha_l}(B_p)\cdot \cN^1(p^n),
\end{displaymath}
\noindent where $f_{\alpha_l} \in \HH^1(\Sigma_A, \Bbbk)$ is a preimage of $(\alpha_l, \alpha_l) \in \HHH^1(A,A)$ under the map from \Cref{lem: linear dependencies between traces}. On the other hand, if $\beta$ is an arrow which does not appear in $p$, then $[(\beta, \beta), \cN^1(p^n)]=0=f_{\beta}(B_p)$, so the second assertion follows by linearity. For the second assertion, we note that $(\alpha_lp^n, \alpha_l) \circ_i (p^m, s(p))=0$ for all $1 \leq i \leq l(\alpha_lp^n)$ and similar as before we have
\begin{displaymath}
	\begin{aligned}
[\cN^0(p^m), \cN^1(p^n)] & =(p^m, s(p)) \circ (\alpha_l p^n, \alpha_l) \\ & = (-1)^{(l(p^m)-1)(l(\alpha_l p^n) + |p^n|-1)} \sum_{i=1}^{m} (-1)^{i (l(\alpha_lp^n)-1) + i |p| |p^n|}  (p^{m+n}, s(p)) \\ &  = (-1)^{(m l(p)-1)\omega(p^n)} \sum_{i=1}^m (-1)^{i \omega(p^n)} (p^{m+n}, s(p)) \\ & = {\begin{cases}m \cdot  (p^{m+n}, s(p)) & \text{if $\omega(p^n)$ is even;} \\ 0 & \text{if $\omega(p^n)$ is odd and $\omega(p^m)$ is even} \\ (-1)^{m l(p)} \cdot (p^{m+n}, s(p))  &  \text{if $\omega(p^n)$ and $\omega(p^m)$ are odd.} \end{cases}}
\end{aligned}
\end{displaymath}
\noindent Next, if $p$ is a maximal anti-path such that $\para{p}\neq \emptyset$, then $(p, \para{p}) \circ (\alpha, \alpha)=(p, \para{p})$ if $\alpha$ appears in $p$ and zero otherwise. Likewise, maximality of $\para{p}$ implies that $(\alpha, \alpha) \circ (p, \para{p})=(p, \para{p})$ if $\alpha$ appears in $\para{p}$. Taking signs into account, we obtain again that $[(\alpha, \alpha), (p, \para{p})]=f_{\alpha}(B_p) \cdot (p, \para{p})$ and the claim follows by linearity. We omit the last and similar case $[(\alpha, \alpha), (s(q), q)]$, where $q$ is a closed maximal path of $A$.
\end{proof}
\begin{rem}
The dependency on the parity of $l(p)$ in the third case of $[\cN^0(u^m), \cN^1(u^n)]$ actually disappears in those cases where $\cN^0(u^m)$ and $\cN^1(u^n)$ are non-trivial Hochschild classes because this case can only appear when $\operatorname{char} \Bbbk$ is even and hence signs do no longer matter. 
\end{rem}

\begin{cor}Let $u$ be a primitive complete anti-path (resp.~path) of $u$. If $\omega(u)$ or $\operatorname{char} \Bbbk$ are even, then the Hochschild classes $\cN^0(u^m), \cN^1(u^m)$, $m \geq 1$ span a graded Lie subalgebra of $\HHH^{\bullet+1}(A,A)$ which is isomorphic to $\hat{\cU}_{\omega(u)}$ (resp.~$\cU_{\omega(u)}$). 
\end{cor}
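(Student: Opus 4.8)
The plan is to read off the Lie algebra structure directly from \Cref{lem: Lie bracket complete paths and anti-paths}. First I would make the elementary reductions. Since $\omega(u)$ or $\operatorname{char}\Bbbk$ is even, $\omega(u^m)=m\,\omega(u)$ is even for every $m\ge1$ (and otherwise every sign occurring below is irrelevant), so each $\cN^0(u^m)$ and $\cN^1(u^m)$ satisfies the conditions in parts (2)--(3) of \Cref{thm: appendix Hochschild basis} and hence defines a non-zero class; moreover the powers $u^m$ lie in pairwise distinct rotation classes, so up to non-zero scalars these classes form part of the basis of \Cref{thm: appendix Hochschild basis} and are in particular linearly independent. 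In this regime the case distinctions of \Cref{lem: Lie bracket complete paths and anti-paths} all collapse to the even branch, giving
\begin{displaymath}
[\cN^1(u^m),\cN^1(u^n)]=(m-n)\,\cN^1(u^{m+n}),\qquad [\cN^0(u^m),\cN^1(u^n)]=m\,\cN^0(u^{m+n}),
\end{displaymath}
the chain-level element $(u^{m+n},s(u))$ appearing there being understood to represent the class $\cN^0(u^{m+n})$. The one relation not covered by that lemma is $[\cN^0(u^m),\cN^0(u^n)]=0$, which I would check by inspecting the composition formulas \eqref{eq: composition product 1}--\eqref{eq: composition product 4}: in the anti-path case every summand of $\cN^0(u^n)$ has trivial $q$-component, so no case can produce a non-zero term $f\circ_i\cN^0(u^n)$ or $\cN^0(u^n)\circ_i f$, while in the path case every summand of $\cN^0(q^m)$ has trivial $p$-component, so $\cN^0(q^m)\circ(-)$ is an empty sum; hence $[\cN^0,\cN^0]$ vanishes already on the chain level.

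With these three families of relations in hand, the graded subspace spanned by $\{\cN^0(u^m),\cN^1(u^m)\}_{m\ge1}$ is closed under the Gerstenhaber bracket, and I would define the comparison map by $e_m\mapsto\cN^1(u^m)$ and $f_m\mapsto\cN^0(u^m)$. It is then a morphism of graded Lie algebras: the structure constants are precisely those of $\cU_{\omega(u)}$ by the relations above together with $[f_i,f_j]=0$, and it is homogeneous because, comparing the bidegrees recorded in \Cref{thm: appendix Hochschild basis} with the grading of $\widehat{\cU}_{\omega(u)}$ (resp.\ $\cU_{\omega(u)}$), one sees that $\cN^1(u^m)$ and $\cN^0(u^m)$ sit exactly in the total degrees assigned to $e_m$ and $f_m$ after the shift in $\HHH^{\bullet+1}(A,A)$. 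Injectivity is immediate since the images are, up to scalars, part of that basis; surjectivity onto the span is by construction; so the map is an isomorphism onto the span, which is thereby a graded Lie subalgebra of $\HHH^{\bullet+1}(A,A)$.

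Finally I would address the dichotomy between $\widehat{\cU}_{\omega(u)}$ and $\cU_{\omega(u)}$. When $\omega(u)\ne0$ the two agree, so suppose $\omega(u)=0$. If $u$ is a complete anti-path, the cohomological degree $l(u^m)+s=m\,l(u)+s$ of $\cN^s(u^m)$ tends to infinity, hence $\cN^s(u^m)\to0$ in the Banach topology on $\HHH^{\bullet}(A,A)$; every formal infinite combination then converges, the closed span is isomorphic to the product completion $\widehat{\cU}_0$, and the comparison map extends to a continuous isomorphism. If instead $u$ is a complete path, $\cN^s(u^m)$ has the fixed cohomological degree $s\in\{0,1\}$, so only finite combinations occur and the span is $\cU_0$; this accounts for the two cases in the statement. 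I expect the main obstacle to be purely bookkeeping: tracking the scalars and signs produced by \Cref{lem: Lie bracket complete paths and anti-paths} so that the relations $[e_i,e_j]=(i-j)e_{i+j}$ and $[f_i,e_j]=i\,f_{i+j}$ hold on the nose with the generators $\cN^1(u^m),\cN^0(u^m)$, with no auxiliary rescaling needed — and, in the completed case, checking continuity. Everything else is formal.
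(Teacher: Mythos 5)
Your argument is exactly the one the paper intends: the corollary is stated without proof as an immediate consequence of \Cref{lem: Lie bracket complete paths and anti-paths}, with the parity hypothesis collapsing all case distinctions to the even branch, and you correctly supply the remaining details (the chain-level vanishing of $[\cN^0,\cN^0]$ via the $\circ_i$ formulas, and the completed-versus-uncompleted dichotomy coming from whether the cohomological degree $l(u^m)+s$ grows or stays bounded). No gaps.
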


\begin{lem}Let $b$ and $b'$ be elements of the basis from \Cref{thm: appendix Hochschild basis}. Then, unless the pair $(b,b')$ appears as one of the cases from \Cref{lem: Lie bracket complete paths and anti-paths} (up to switching the roles of $b$ and $b'$), one has $[b,b']=0$.
\end{lem}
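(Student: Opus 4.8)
The plan is to evaluate every bracket $[b,b']$ on the chain complex $C_\cP(A)$, using $[b,b']=b\circ b'-(-1)^{\ast}b'\circ b$ with $b\circ b'=\sum_i\pm\,b\circ_i b'$ and the explicit formulas \eqref{eq: composition product 1}--\eqref{eq: composition product 4}, and then to run through the finitely many types of pairs of basis elements of \Cref{thm: appendix Hochschild basis}. The organising principle is that a term $f\circ_i g$ in those formulas is non-zero only when the path component $q_2$ of $g$ is a single arrow equal to one of the arrows of the anti-path of $f$, or shares its first or last arrow with that anti-path, or else the anti-path of $f$ has length $1$; in every such case the output must moreover be a legal anti-path paired with a path not lying in $I$. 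The argument is thus the bracket analogue of \Cref{lem: cup product orthogonal}, and I would lean on that lemma's combinatorics throughout.

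First I would dispose of the two structural cases. If $b=\mathbb{1}$, then $b$ has arity $0$ so $b\circ b'=0$, while $\mathbb{1}$ has trivial path component, so no $\circ_i$-term with $\mathbb{1}$ as the inner argument survives; hence $[\mathbb{1},b']=0$. If $b=(\alpha,\alpha)$ and $b'=(\beta,\beta)$ with $\alpha\ne\beta$, then \eqref{eq: composition product 4} would force $\beta$ to contain $\alpha$ as a factor, i.e.\ $\alpha=\beta$, a contradiction; so both $\circ_1$-terms vanish and $[b,b']=0$, which records that the image of $\HH^1(\Sigma_A,\Bbbk)$ is an abelian Lie subalgebra of $\HHH^1(A,A)$.

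Next come the ``orthogonal'' pairs, handled exactly as for the cup product. For $b=\cN^s(u^m)$ and $b'=\cN^t(v^n)$ with $u,v$ primitive complete anti-paths (resp.\ paths) from distinct rotation classes, or one of them an anti-path and the other a path, no concatenation of rotations of $u^m$ and $v^n$ is ever a legal anti-path (resp.\ path), since that would force completeness and hence a common rotation class; inserting a single arrow as in \eqref{eq: composition product 1} does not help. The same maximality arguments as in \Cref{lem: cup product orthogonal} kill any bracket between an element of the form $(p,\para{p})$ ($p$ a maximal anti-path) or $(s(q),q)$ ($q$ a closed maximal path) and any $\cN^s(u^m)$, or another such maximal-path datum, because none of $p$, $\para{p}$, $q$, $s(q)$ can be extended. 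Finally $[\cN^0(u^m),\cN^0(u^n)]$ for a common $u$ vanishes because $\cN^0$ of a complete anti-path (resp.\ path) has trivial path (resp.\ anti-path) component, so no term of \eqref{eq: composition product 1}--\eqref{eq: composition product 4} survives with $\cN^0(u^n)$ as inner argument. What is then left are precisely the pairs $(\cN^s(u^m),\cN^t(u^n))$ for a common primitive complete anti-path or path $u$ with $(s,t)\ne(0,0)$, and the pairs $((\alpha,\alpha),X)$ with $X$ one of $\cN^s(u^m)$, $(p,\para{p})$ or $(s(q),q)$, the arrow $\alpha$ matching an occurrence inside the (anti-)path attached to $X$. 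These are exactly the pairs computed in \Cref{lem: Lie bracket complete paths and anti-paths} (the remaining subcase $[h,\cN^0(u^m)]=f(B_u)\,\cN^0(u^m)$, $s=0$, following by the same computation, or from $[h,\cN^1(u^m)]$ via \Cref{lem: cup product complete paths and anti-paths}, $[h,(\alpha,\alpha)]=0$, and the biderivation identity $[h,a\cup b]=[h,a]\cup b\pm a\cup[h,b]$), so for every pair outside that list the bracket is zero, which is the assertion.

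I expect the main obstacle to be the same as in \Cref{lem: cup product orthogonal}: a handful of ``near-miss'' pairs — typically a complete anti-path $u$ and a complete path $v$ that start and end with the same two arrows — for which $b\circ_i b'$ or $b'\circ_i b$ is a non-zero chain whose cohomology class nevertheless vanishes because it is a coboundary, paralleling the element $(u^m,v^n)$ there; these are settled by exhibiting an explicit primitive. A secondary nuisance is the degenerate situation in which a complete (anti-)path or a closed maximal path has length $1$, i.e.\ is a loop: the interior/boundary case split in \eqref{eq: composition product 1}--\eqref{eq: composition product 4} then collapses to \eqref{eq: composition product 4} and the affected $\circ_i$-terms have to be recomputed directly, though the standing exclusion of the loop and Kronecker quivers keeps this finite and manageable.
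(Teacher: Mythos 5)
Your proposal is correct and follows essentially the same route as the paper: a chain-level case analysis via the explicit $\circ_i$ formulas, reusing the combinatorial obstructions from \Cref{lem: cup product orthogonal} (completeness forcing a common rotation class, maximality, trivial path components killing the inner argument), identifying the surviving non-zero chains as coboundaries, and invoking the compatibility of $\cup$ with $[-,-]$ to settle the cases involving $\cN^1$. You are in fact slightly more careful than the paper in flagging $[h,\cN^0(u^m)]=f(B_u)\cdot\cN^0(u^m)$ as a further non-vanishing bracket not literally listed in \Cref{lem: Lie bracket complete paths and anti-paths}, although it is asserted in \Cref{IntrothmB}.
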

\begin{proof}
Let $p, p'$ be primitive complete anti-paths, $q,q'$ primitive complete paths, $u, u'$ distinct maximal anti-paths such that $\para{u}\neq \emptyset \neq \para{u}'$ and $v, v'$ be distinct closed maximal paths of $A$. Suppose $b, b'$ are any two distinct elements from the set 
\begin{equation}\label{eq: orthogonality set}
\{\cN^0(p),\cN^0(p), \cN^0(q), \cN^0(q'), (u, \para{u}),(u', \para{u}'), (s(v), v), (s(v'), v')\}.
\end{equation}
 We recall our standing assumption that $(Q,I)$ is not a loop and not the Kronecker quiver. Then a calculation shows that for $(\alpha, \beta) \in C_{\cP}(A)$, $z \coloneqq (u, \para{u}) \circ (\alpha, \beta)$ is $0$, an atomic coboundary of rank $1$ (whose coboundary property can be verified by \Cref{lem: classification atomic cocycle type A rank 1}), or we have either $z=(\alpha, \para{u})$, $l(u)=1$ and $\beta=u$, or, $z=(u, \para{u})$, $l(u)> 1$ and $\alpha=\beta$ is an arrow which appears in $u$. Hence $(u, \para{u}) \circ w$ is a coboundary for any $w \in \{(u', \para{u}'), \cN^0(p), \cN^0(q), (s(v), v)\}$. Similarly, another calculation shows that $\cN^0(p) \circ b'$ is always zero. This implies that all elements in the set $\{\cN^0(p),\cN^0(p'), (u, \para{u}),(u', \para{u}')\}$ are pairwise orthogonal with respect to $[-,-]$. Since $(\alpha, \beta) \circ (\gamma, \delta)=0$ if $l(\alpha)=0$, we now can also conclude that $\cN^0(q)$ and $(s(v), v)$ are orthogonal to all other elements in \eqref{eq: orthogonality set}. Finally, the compatibility of $\cup$ and $[-,-]$, the fact that $\cN^1(p)$ and $\cN^1(q)$ agree up to sign with the products $(\alpha, \alpha) \cup \cN^0(p)$ and $(\alpha, \alpha) \cup \cN^0(q)$ implies orthogonality in the remaining cases which involve $\cN^1(p)$ and $\cN^1(q)$.  
\end{proof}

\section{Applications to  intrinsically formal gentle algebras}

\noindent We recall that a graded algebra $B$ is \textit{intrinsically formal} if every $A_\infty$-algebra $C$ so that $\HH^{\bullet}(C) \cong B$ as graded algebras, $C$ is formal, that is, quasi-isomorphic to its cohomology. The following theorem characterises intrinsically formal graded gentle algebras under a mild assumption. 
\begin{thm}\label{thm: intrinsic formality graded gentle algebras}Let $A$ be a graded gentle algebra. Suppose $\Sigma_A$ contains no boundary components which has a single stop and winding number $2$. Then $A$ is intrinsically formal if and only if $\Sigma_A$ has no unmarked boundary component with winding number $2$.
\end{thm}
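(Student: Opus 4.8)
The plan is to reduce intrinsic formality to the vanishing of a specific family of Hochschild cohomology groups and then read those groups off the basis of \Cref{thm: appendix Hochschild basis}. First I would recall the obstruction theory for formality of a graded algebra: after passing to a minimal model, an $A_\infty$-algebra with cohomology $A$ is a sequence of higher products $m_n\colon A^{\otimes n}\to A$ of degree $2-n$ ($n\ge 3$) with $m_2$ the multiplication of $A$, and the obstruction to removing the first non-vanishing $m_n$ by an $A_\infty$-isomorphism is the class of $m_n$ in the internal-degree-$(2-n)$ part of $\HHH^n(A,A)$, which in the bigrading of \Cref{sec: Bigraded Hochschild cohomology} is precisely $\HHH^{n,\,2-n}(A,A)$, i.e.\ the summand of $\HHH^{\bullet}(A,A)$ in total degree $2$ and cohomological degree $n$. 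Hence $A$ is intrinsically formal whenever $\HHH^{n,\,2-n}(A,A)=0$ for all $n\ge 3$; conversely, a non-zero class $c$ in one of these groups that admits a cochain representative with $c\circ c=0$ (so that $[c,c]=0$ on cochains) produces the non-formal minimal $A_\infty$-structure with $m_n=c$ and all other higher products zero. So the whole statement reduces to identifying which basis elements of \Cref{thm: appendix Hochschild basis} lie in total degree $2$ and cohomological degree $\ge 3$ and, when they exist, exhibiting such a representative.

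Next I would sieve the basis of \Cref{thm: appendix Hochschild basis} through the condition ``total degree $2$, cohomological degree $\ge 3$''. The unit, the classes $(\alpha,\alpha)$ spanning the image of $\HH^1(\Sigma_A,\Bbbk)$, and the classes $\cN^s(q)$ of complete paths (the fully marked boundary components) all have cohomological degree $\le 1$ and are irrelevant. The classes $(u,u_{\circ})$ and $(s(u),u)$ attached to a boundary component $B$ with a single stop lie in total degree equal to the winding number of $B$, so they enter the obstruction region exactly for single-stop boundary components of winding number $2$; these are the classes the extra hypothesis removes, and treating them properly is the gap flagged in \Cref{rem: intrinsic formality}. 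The remaining candidates are the $\cN^s(p)$ of complete anti-paths $p$, equivalently of unmarked boundary components $B$: $\cN^0(p)$ lies in total degree $\omega(B)$ and cohomological degree $l(p)$, while $\cN^1(p)$ lies in total degree $\omega(B)+1$ and cohomological degree $l(p)+1$; combining this with the parity constraint on $\omega(B)$ under which these are genuine basis elements, the only ones reaching the obstruction region are the $\cN^0(p)$ (and in characteristic $2$ their $\cN^1$-companions) with $\omega(B)$ the critical value ``$2$'' of the statement, and exclusion of the loop and Kronecker quivers forces the corresponding $l(p)\ge 3$. Under the hypothesis of the theorem this yields $\bigoplus_{n\ge 3}\HHH^{n,\,2-n}(A,A)=0$ if and only if $\Sigma_A$ has no unmarked boundary component of winding number $2$, which is the ``if'' direction.

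For the ``only if'' direction, assume $\Sigma_A$ has an unmarked boundary component $B$ of winding number $2$ and let $p$ be the corresponding primitive complete anti-path. The cochain $\cN^0(p)=\sum_i\pm(\rot^i p,\,s(\rot^i p))$ is a Hochschild cocycle representing a non-zero class in $\HHH^{l(p),\,2-l(p)}(A,A)$ with $l(p)\ge 3$; since each summand has a vertex in its second entry, none of the partial compositions $\circ_i$ of two such summands is non-zero (see \eqref{eq: composition product 1}--\eqref{eq: composition product 4}), so $\cN^0(p)\circ\cN^0(p)=0$ and hence $[\cN^0(p),\cN^0(p)]=0$ on cochains, consistent with \Cref{IntrothmB}(3). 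Therefore the minimal $A_\infty$-structure on $A$ whose only non-trivial higher product is $m_{l(p)}=\cN^0(p)$ satisfies all $A_\infty$-relations, has cohomology $A$, and is not formal because its primary obstruction is $\cN^0(p)\neq 0$; thus $A$ is not intrinsically formal.

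I expect the main obstacle to be the middle step: the bookkeeping guaranteeing that no $\cN^1$-class and no low-length $\cN^0$-class escapes into the obstruction bidegrees once the excluded quivers are in force, and the reconciliation of the winding-number normalisation of \Cref{thm: appendix Hochschild basis} with that of the statement. The single-stop hypothesis is present precisely to sidestep the more delicate higher $A_\infty$-obstruction analysis for the classes $(u,u_{\circ})$ and $(s(u),u)$, which is why \Cref{rem: intrinsic formality} only conjectures the general case.
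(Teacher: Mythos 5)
Your ``if'' direction coincides with the paper's: Kadeishvili's criterion together with a sieve of the basis of \Cref{thm: appendix Hochschild basis} through the bidegrees $(n,2-n)$, $n\ge 3$. The ``only if'' direction is where you diverge from the paper, and it contains a genuine gap. You propose to define a minimal $A_\infty$-structure by setting $m_{l(p)}=\cN^0(p)$ and all other higher products to zero, and you justify the required identity $\cN^0(p)\circ\cN^0(p)=0$ by inspecting the formulas \eqref{eq: composition product 1}--\eqref{eq: composition product 4}. But those formulas compute the operation \emph{induced} on the complex $C_{\cP}(A)$, namely $f\circ_i g=U^{\ast}\bigl(V^{\ast}(f)\circ_i V^{\ast}(g)\bigr)$; an element of $C_{\cP}(A)$ is not itself a multilinear map $A^{\otimes n}\to A$. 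To write down an $A_\infty$-structure you must use the honest cochain representative $V^{\ast}(\cN^0(p))$, and the Stasheff identity in arity $2l(p)-1$ requires $V^{\ast}(\cN^0(p))\circ V^{\ast}(\cN^0(p))=0$ in $C(A)$ \emph{before} applying $U^{\ast}$. This fails: $V^{\ast}\bigl((\rot^i(p),s(\rot^i(p)))\bigr)$ contains the extra summands $(u\alpha_{i+1}\otimes\cdots\otimes\alpha_{i+n}v,\,uv)$ with $uv$ non-trivial, for instance $u=\alpha_{i+j}$ and $v$ trivial, whose output is the arrow $\alpha_{i+j}$; inserting such a summand into the $j$-th slot, $1<j<l(p)$, of another summand yields a non-zero term supported on a basis input that no other term of the double sum hits, so nothing cancels. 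Hence your proposed structure does not satisfy the $A_\infty$-relations and the argument produces no non-formal model. A secondary point: your claim that excluding the loop and Kronecker quivers forces $l(p)\ge 3$ is unjustified (primitive complete anti-paths of length $1$ or $2$ occur inside larger gentle quivers), and for $l(p)\le 2$ the class $\cN^0(p)$ does not even sit in cohomological degree $\ge 3$, so your mechanism has nothing to latch onto there.

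The paper's route for this direction is entirely different and sidesteps both problems. It glues a disk onto the unmarked boundary component (the winding-number-$2$ hypothesis is exactly what allows the line field to extend over the disk), obtains a disk sequence from the complete anti-path, and invokes \cite[Section 3.3]{HaidenKatzarkovKontsevich} to equip $A$ with a genuine minimal $A_\infty$-structure $A'$ with non-trivial higher multiplications. Non-formality is then detected not through the primary obstruction class but through the Grothendieck group: $\cK_0(A')$ acquires a relation among the classes of the vertices, whereas $\cK_0(A)$ is free on them (the non-smooth case being reduced to the smooth one by realising $A$ as an idempotent subalgebra of a homologically smooth gentle algebra). To salvage your chain-level approach you would need either a representative of $\cN^0(p)$ that genuinely squares to zero for the circle product in $C(A)$, or the full obstruction-theoretic induction adding correction terms $m_k$ for $k>l(p)$ followed by a separate non-formality argument; both are substantially harder than the paper's citation.
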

\begin{proof}We assume that the reader is familiar with the terminology from \cite{HaidenKatzarkovKontsevich}. First, lets prove that $A$ is not intrinsically formal if $\Sigma_A$ contains an unmarked component of winding number $2$. The assumption one the absence of certain components with a single marked interval is only needed in the proof other direction. If $B$ is an unmarked component $B$ with winding number $2$, it means that $A$ has a primitive complete anti-path $u=\alpha_1 \cdots \alpha_n$ such that $\omega(u)=2$ and by gluing a disk to $B$ we obtain a graded marked surface $\Sigma' \supseteq \Sigma_A$. Here the assumption on the winding number of $B$ is used to guarantee that the line field on $\Sigma_A$ extends to a line field on $\Sigma'$. The algebra $A$ corresponds to an arc system $\cA$ on $\Sigma'$ (and even on $\Sigma_A$), where the presernce ofthe anti-path $u$ implies the existence of a disk sequence. By \cite[Section 3.3]{HaidenKatzarkovKontsevich}, $A$ can therefore be equipped with a minimal $A_\infty$-structure with non-trivial higher $A_\infty$-multiplications which extends the multiplication on $A$. Denoting this $A_\infty$-algebra by $A'$, it follows from the discussion preceding \cite[Lemma 3.2]{HaidenKatzarkovKontsevich} that there is a non-trivial relation in $\cK_0(A')$ between elements the classes $[x]$, $x \in Q_0$, associated to the vertices of $A'$ (=vertices of $A$). If $A$ is homologically smooth, one can argue as in \cite[Theorem 5.1]{HaidenKatzarkovKontsevich} that $\cK_0(A)$ is freely generated by the classes $[x]$ as an abelian group and hence $\cK_0(A) \not \cong \cK_0(A')$. It follows that $A'$ cannot be formal. Every graded gentle algebra is an idempotent subalgebra of a homologically smooth graded gentle algebra simply by turning all unmarked components of $\Sigma_A$ into fully marked ones and adding additional arcs to the arc system $\cA$ to produce a full formal arc system of of this new surface. Because subgroups of free abelian subgroups are still free, $\cK_0(A)$ is free of rank $|\cA|$ and we can produce a contradiction in the same way. 

For the converse, we use Kadeishvili's criterion \cite{Kadeishvili}. It says that a graded algebra $B$ such that $\HHH^{n,2-n}(B,B)=0$ for all $n \geq 3$ is intrinsically formal. First we note that all contributions of fully marked components and closed maximal paths of $A$ lie in $\HHH^{s, \bullet}(A,A)$, $s \in \{0,1\}$ and hence are irrelevant for us. The same applies to $\HH^1(\Sigma_A, \Bbbk)$ whose contributions lie in $\HHH^{1, 0}(A,A)$. Due to our assumptions, we therefore conclude that $\HHH^{n,2-n}(A,A)=0$ for all $n \geq 2$ and the claim follows from Kadeishvili's criterion.
\end{proof}
\noindent A peculiar consequence of the previous theorem is that intrinsic formality is a Morita invariant for graded gentle algebras satisfying the assumptions of \Cref{thm: intrinsic formality graded gentle algebras} (which is invariant under derived equivalence), meaning that such a graded gentle algebra $A$ is intrinsically formal if and only this is the case for all derived equivalent graded gentle algebras. This is far from true for general graded algebras.
\begin{rem}\label{rem: intrinsic formality}
Computations suggest that \Cref{thm: intrinsic formality graded gentle algebras} remains true when removing the assumption on $\Sigma_A$ from \Cref{thm: intrinsic formality graded gentle algebras} about boundary components with a single stop. The corresponding Hochschild cocycle $f=(\alpha, \para{\alpha})$ is a Maurer-Cartan element in the Hochschild complex with respect to dg Lie algebra structure. Thus $(A, f)$ defines a non-trivial minimal $A_\infty$-structure on $A$  which appears to be formal via a \textit{formal diffeomorphism} in the sense of \cite{SeidelBook}. This seems to extend to linear combinations of elements of the same form. In the absence of unmarked components of winding number $2$, $\HHH^{n, 2-n}(A,A)$ is generated by such elements for all $n \geq 2$. We expect that an adaptation of the proof of Kadeishvili's formality criterion could be used to show that $A$ is still intrinsically formal.
\end{rem}

\bibliography{Bibliography}{}

@article {AmiotPlamondonSchroll,
    AUTHOR = {Amiot, C. and Plamondon, P.-G. and Schroll, S.},
     TITLE = {A complete derived invariant for gentle algebras via winding
              numbers and {A}rf invariants},
   JOURNAL = {Selecta Math. (N.S.)},
  FJOURNAL = {Selecta Mathematica. New Series},
    VOLUME = {29},
      YEAR = {2023},
    NUMBER = {2},
     PAGES = {Paper No. 30, 36},
      ISSN = {1022-1824,1420-9020},
   MRCLASS = {16E35 (55M25)},
  MRNUMBER = {4565164},
MRREVIEWER = {Eric\ J.\ Hanson},
       DOI = {10.1007/s00029-022-00822-x},
       URL = {https://doi.org/10.1007/s00029-022-00822-x},
}

@article{AssemSkowronski,
   author={I.~Assem and A.~Skowro\'{n}ski},
   title={ Iterated tilted algebras of type {$\tilde{A}_n$}},
   journal={Math. Z.},
   volume={195},
   year=1987,
   pages={269--290}
}

@article {AvellaAlaminosGeiss,
    AUTHOR = {Avella-Alaminos, D. and Geiss, C.},
     TITLE = {Combinatorial derived invariants for gentle algebras},
   JOURNAL = {J. Pure Appl. Algebra},
  FJOURNAL = {Journal of Pure and Applied Algebra},
    VOLUME = {212},
      YEAR = {2008},
    NUMBER = {1},
     PAGES = {228--243},
      ISSN = {0022-4049},
   MRCLASS = {16G20 (18E30)},
  MRNUMBER = {2355048},
MRREVIEWER = {Mar\'{\i}a Julia Redondo},
       DOI = {10.1016/j.jpaa.2007.05.014},
       URL = {https://doi.org/10.1016/j.jpaa.2007.05.014},
}

@article {HaidenKatzarkovKontsevich,
    AUTHOR = {Haiden, F. and Katzarkov, L. and Kontsevich, M.},
     TITLE = {Flat surfaces and stability structures},
   JOURNAL = {Publ. Math. Inst. Hautes \'{E}tudes Sci.},
  FJOURNAL = {Publications Math\'{e}matiques. Institut de Hautes \'{E}tudes
              Scientifiques},
    VOLUME = {126},
      YEAR = {2017},
     PAGES = {247--318},
      ISSN = {0073-8301,1618-1913},
   MRCLASS = {14D23 (14F05 18E30 32Q26 53D37 81T30)},
  MRNUMBER = {3735868},
MRREVIEWER = {Mee\ Seong\ Im},
       DOI = {10.1007/s10240-017-0095-y},
       URL = {https://doi.org/10.1007/s10240-017-0095-y},
}

@article
{LekiliPolishchukGentle,
   author={Y.~Lekili and A.~Polishchuk},
   title={ Derived equivalences of gentle algebras via {F}ukaya categories},
 journal = {Math. Ann.},
 volume={376},
 pages={187--225},
   year={2020}
}

@article
{OpperPlamondonSchroll,
   author={S.~Opper and {P-G}.~Plamondon and S.~Schroll},
   title={ A geometric model for the derived category of gentle algebras},
 journal = {arXiv:1801.09659},
   year={2018}
}

@article
{OpperDerivedEquivalences,
   author={S.~Opper},
   title={On auto-equivalences and complete derived invariants of gentle algebras},
 journal = {arXiv:1904.04859},
   year={2019}
}

@article {BriggsRubioYDegrassiMaximalTori,
    AUTHOR = {Briggs, B. and Rubio y Degrassi, L.},
     TITLE = {Maximal tori in {$\rm HH^1$} and the fundamental group},
   JOURNAL = {Int. Math. Res. Not. IMRN},
  FJOURNAL = {International Mathematics Research Notices. IMRN},
      YEAR = {2023},
    NUMBER = {7},
     PAGES = {5538--5568},
      ISSN = {1073-7928},
   MRCLASS = {16E40 (17B50)},
  MRNUMBER = {4565695},
       DOI = {10.1093/imrn/rnac026},
       URL = {https://doi.org/10.1093/imrn/rnac026},
}

@Book{SeidelBook,
 Author = {Seidel, P.},
 Title = {Fukaya categories and {Picard}-{Lefschetz} theory},
 FSeries = {Zurich Lectures in Advanced Mathematics},
 Series = {Zur. Lect. Adv. Math.},
 ISBN = {978-3-03719-063-0},
 Year = {2008},
 Publisher = {Z{\"u}rich: European Mathematical Society (EMS)},
 Language = {English},
 DOI = {10.4171/063},
 Keywords = {53-02,53D40,32Q65,53D12,16E45,81T45},
 zbMATH = {5294596},
 Zbl = {1159.53001}
}

@article{ChaparroSchrollSolotarSuarezAlvarez,
      title={The {H}ochschild (co)homology of gentle algebras}, 
      author={Chaparro, C. and Schroll, S. and Solotar, A. and {Suárez-Álvarez}, M.},
      year={2024},
      eprint={2311.08003},
journal={arXiv:2311.08003},
      archivePrefix={arXiv},
      primaryClass={math.RT}
}

@article {RedondoRoman,
    AUTHOR = {Redondo, M. and Rom\'{a}n, L.},
     TITLE = {Gerstenhaber algebra structure on the {H}ochschild cohomology
              of quadratic string algebras},
   JOURNAL = {Algebr. Represent. Theory},
  FJOURNAL = {Algebras and Representation Theory},
    VOLUME = {21},
      YEAR = {2018},
    NUMBER = {1},
     PAGES = {61--86},
      ISSN = {1386-923X,1572-9079},
   MRCLASS = {16E40 (16G20 16W99)},
  MRNUMBER = {3748354},
MRREVIEWER = {Patrick\ Le Meur},
       DOI = {10.1007/s10468-017-9704-1},
       URL = {https://doi.org/10.1007/s10468-017-9704-1},
}

@article{JinSchrollWang,
      title={A complete derived invariant and silting theory for graded gentle algebras}, 
      author={Jin, H. and Schroll, S. and Wang, Z.},
      year={2023},
      eprint={2303.17474},
journal={arXiv:2303.17474},
      archivePrefix={arXiv},
      primaryClass={math.RT}
}

@article{MartinezVillaDeLaPenaFundamentalGroup,
title = {The universal cover of a quiver with relations},
journal = {J. Pure Appl. Algebra},
volume = {30},
number = {3},
pages = {277-292},
year = {1983},
issn = {0022-4049},
doi = {https://doi.org/10.1016/0022-4049(83)90062-2},
url = {https://www.sciencedirect.com/science/article/pii/0022404983900622},
author = {R. Martínez-Villa and J.A. {De la Peña}}
}

@article{OpperIntegration,
      title={Integration of Hochschild cohomology, derived Picard groups and uniqueness of lifts}, 
      author={Opper, S.},
      year={2025},
      journal={arXiv:2405.14448}
}

@incollection {BocklandtVanDeKreeke,
    AUTHOR = {Bocklandt, R. and van de Kreeke, J.},
     TITLE = {Deformations of gentle {$A_\infty$}-algebras},
 BOOKTITLE = {Recent advances in noncommutative algebra and geometry},
    SERIES = {Contemp. Math.},
    VOLUME = {801},
     PAGES = {17--50},
 PUBLISHER = {Amer. Math. Soc., [Providence], RI},
      YEAR = {[2024] \copyright 2024},
   MRCLASS = {16E40 (16S80)},
  MRNUMBER = {4756376},
MRREVIEWER = {Jacob Laubacher},
       DOI = {10.1090/conm/801/16082},
       URL = {https://doi.org/10.1090/conm/801/16082},
}

@article{BoothGoodbodyOpper,
      title={Reflexive dg categories in algebra and topology}, 
      author={Booth, M. and Goodbody, I. and Opper, S.},
      year={2025},
      journal={arXiv:2506.11213}
}

@article {DeLaPenaSaorin,
    AUTHOR = {de la Pe\~{n}a, J. Antonio and Saor\'{\i}n, M.},
     TITLE = {On the first {H}ochschild cohomology group of an algebra},
   JOURNAL = {Manuscripta Math.},
  FJOURNAL = {Manuscripta Mathematica},
    VOLUME = {104},
      YEAR = {2001},
    NUMBER = {4},
     PAGES = {431--442},
      ISSN = {0025-2611},
   MRCLASS = {16E40 (16G20)},
  MRNUMBER = {1836104},
MRREVIEWER = {Andrea Solotar},
       DOI = {10.1007/s002290170017},
       URL = {https://doi.org/10.1007/s002290170017},
}

@article {Tamaroff,
    AUTHOR = {Tamaroff, P.},
     TITLE = {Minimal models for monomial algebras},
   JOURNAL = {Homology Homotopy Appl.},
  FJOURNAL = {Homology, Homotopy and Applications},
    VOLUME = {23},
      YEAR = {2021},
    NUMBER = {1},
     PAGES = {341--366},
      ISSN = {1532-0073},
   MRCLASS = {16E05 (16E40 16E45 18G15 18N40)},
  MRNUMBER = {4185307},
MRREVIEWER = {Hongbo Shi},
       DOI = {10.4310/hha.2021.v23.n1.a18},
       URL = {https://doi.org/10.4310/hha.2021.v23.n1.a18},
}

@article {Kadeishvili,
    AUTHOR = {Kadeishvili, T. V.},
     TITLE = {The structure of the {$A(\infty)$}-algebra, and the
              {H}ochschild and {H}arrison cohomologies},
   JOURNAL = {Trudy Tbiliss. Mat. Inst. Razmadze Akad. Nauk Gruzin. SSR},
  FJOURNAL = {Akademiya Nauk Gruzinsko\u{\i} SSR. Trudy Tbilisskogo
              Matematicheskogo Instituta im. A. M. Razmadze},
    VOLUME = {91},
      YEAR = {1988},
     PAGES = {19--27},
      ISSN = {0234-5838},
   MRCLASS = {18G15 (18F25 19D55 55P45)},
  MRNUMBER = {1029003},
MRREVIEWER = {Zbigniew Marciniak},
}

@article {BocklandtMirrorSymmetryPuncturedSurfaces,
    AUTHOR = {Bocklandt, R.},
     TITLE = {Noncommutative mirror symmetry for punctured surfaces},
      NOTE = {With an appendix by Mohammed Abouzaid},
   JOURNAL = {Trans. Amer. Math. Soc.},
  FJOURNAL = {Transactions of the American Mathematical Society},
    VOLUME = {368},
      YEAR = {2016},
    NUMBER = {1},
     PAGES = {429--469},
      ISSN = {0002-9947},
   MRCLASS = {16G20 (14J33)},
  MRNUMBER = {3413869},
       DOI = {10.1090/tran/6375},
       URL = {https://doi.org/10.1090/tran/6375},
}

@article {RoitzheimWhitehouse,
    AUTHOR = {Roitzheim, C.  and Whitehouse, S.},
     TITLE = {Uniqueness of {$A_\infty$}-structures and {H}ochschild
              cohomology},
   JOURNAL = {Algebr. Geom. Topol.},
  FJOURNAL = {Algebraic \& Geometric Topology},
    VOLUME = {11},
      YEAR = {2011},
    NUMBER = {1},
     PAGES = {107--143},
      ISSN = {1472-2747,1472-2739},
   MRCLASS = {16E45 (16E40 55S30)},
  MRNUMBER = {2764038},
MRREVIEWER = {James\ M.\ Turner},
       DOI = {10.2140/agt.2011.11.107},
       URL = {https://doi.org/10.2140/agt.2011.11.107},
}

@unpublished{BianSchrollSolotarWangWen,
	author={Bian, X. and Schroll, S. and Solotar, A. and Wang, X.-C. and Wen, C.},
	title={Hochschild cohomology of graded skew-gentle algebras: {Gerstenhaber} algebra structure and geometric interpretation},
	note={in preparation},
}

@article
{OpperGradedGentle,
   author={Opper, S.},
   title={Autoequivalences of {F}ukaya categories of surfaces and graded gentle algebras},
 journal = {arXiv:2510.11543},
    year={2025}
}

@article
{BarmeierSchrollWang,
   author={Barmeier, S. and Schroll, S. and Wang, Z.},
   title={Deformations of partially wrapped {F}ukaya categories of surfaces},
 journal = {arXiv:2512.16354},
    year={2025}
}

@article {PaluPilaudPlamondon,
    AUTHOR = {Palu, Y. and Pilaud, V. and Plamondon, P.-G.},
     TITLE = {Non-kissing and non-crossing complexes for locally gentle
              algebras},
   JOURNAL = {J. Comb. Algebra},
  FJOURNAL = {Journal of Combinatorial Algebra},
    VOLUME = {3},
      YEAR = {2019},
    NUMBER = {4},
     PAGES = {401--438},
      ISSN = {2415-6302,2415-6310},
   MRCLASS = {05E45 (05E10 16D90 16G20)},
  MRNUMBER = {4046035},
MRREVIEWER = {Matt\ S.\ Davis},
       DOI = {10.4171/jca/35},
       URL = {https://doi.org/10.4171/jca/35},
}
\bibliographystyle{alpha}
\end{document}